\newcommand{\X}{\mathcal{X}}
\newcommand{\I}{\mathcal{I}}
\newcommand{\E}{\mathbb{E}}
\renewcommand{\P}{\mathbb{P}}
\newcommand{\Yobs}{Y}
\newcommand{\var}{\mbox{Var}}
\newcommand{\cov}{\mbox{Cov}}
\newcommand{\R}{\mathbb{R}}
\newcommand{\Dt}{\tilde D}
\newcommand{\taun}{\hat{\tau}^{mn}}
\newcommand{\theoremfont}{\itshape}
\newcommand{\D}{\mathcal{D}}
\newcommand{\EMSE}{\mbox{EMSE}}
\newcommand{\Pcal}{\mathcal{P}}
\newcommand{\mae}{\gamma_{\mbox{\scriptsize max}}}
\newcommand{\mie}{\gamma_{\mbox{\scriptsize min}}}
\newcommand{\emin}{e_{\min}}
\newcommand{\emax}{e_{\max}}
\DeclareMathOperator{\N}{\mathbb N}
\renewcommand{\eqref}[1]{\textup{\bf \ref{#1}}}
\newcommand{\Sb}{{\bar S}}
\newtheorem{theorem}{Theorem}
\newtheorem{lemma}{Lemma}
\newtheorem{conjecture}{Conjecture}
\newtheorem{simulation}{Simulation}
\newtheorem{remark}{Remark}
\newtheorem{condition}{Condition}
\newtheorem{definition}{Def\,inition}
\newtheorem{example}{Example}
\newenvironment{proof}{%
	\trivlist\item[\hskip \labelsep{\theoremfont Proof.}]%
}{\endtrivlist}
\renewcommand{\theoremfont}{\itshape}
\renewcommand{\eqref}[1]{\textup{\bf \ref{#1}}}
\def\BState{\State\hskip-\ALG@thistlm}
\newcommand{\dropcap}{}
\def\BState{\State\hskip-\ALG@thistlm}
\let\@algcomment\relax
\newcommand\algcomment[1]{\def\@algcomment{\footnotesize#1}}
\renewcommand\fs@ruled{\def\@fs@cfont{\bfseries}\let\@fs@capt\floatc@ruled
	\def\@fs@pre{\hrule height.8pt depth0pt \kern2pt}%
	\def\@fs@post{}%
	\def\@fs@mid{\kern2pt\hrule\kern2pt}%
	\let\@fs@iftopcapt\iftrue}
\author[a]{Sören R. Künzel}
\author[a,b]{Jasjeet S. Sekhon} 
\author[a]{Peter J. Bickel} 
\author[a,c]{Bin Yu}
\affil[a]{{Department of Statistics, University of California, Berkeley, CA 94720}}
\affil[b]{Department of Political Science, University of California, Berkeley, CA 94720}
\affil[c]{Department of Electrical Engineering and Computer Science, University of California, Berkeley, CA 94720}
\title{Meta-learners for Estimating  Heterogeneous Treatment Effects using Machine Learning}
\newcommand{\refb}{\ref}
\begin{document}
	 \defcitealias{gyorfi2006distribution}{(20)}
	 \defcitealias{bickel2015mathematical}{(22)}
	 \defcitealias{breiman2001random}{(34)}
	 \defcitealias{hill2011bayesian}{(5)}
	 \defcitealias{Lewandowski2009}{(36)}
	 \defcitealias{wager2015estimation}{(7)}
	 \defcitealias{heckman1997making}{(35)}
	 \defcitealias{stone1982optimal}{(18)}
	 \defcitealias{Tsybakov2009}{(21)}
	 \defcitealias{KuenzelHTE}{(15)}
	 
	 \maketitle

	 \begin{abstract}
	 	\noindent  
	 	\footnotesize
	 	There is growing interest in estimating and analyzing heterogeneous treatment effects in experimental and observational studies. We describe a number of meta-algorithms that can take advantage of any supervised learning or regression method in machine learning and statistics to estimate the Conditional Average Treatment Effect (CATE) function. Meta-algorithms build on base algorithms---such as Random Forests (RF), Bayesian Additive Regression Trees (BART) or neural networks---to estimate the CATE, a function that the base algorithms are not designed to estimate directly.  We introduce a new meta-algorithm, the X-learner, that is provably efficient when the number of units in one treatment group is much larger than in the other, and can exploit structural properties of the CATE function. For example, if the CATE function is linear and the response functions in treatment and control are Lipschitz continuous, the X-learner can still achieve the parametric rate under regularity conditions. We then introduce versions of the X-learner that use RF and BART as base learners. In extensive simulation studies, the X-learner performs favorably, although none of the meta-learners is uniformly the best.  In two persuasion field experiments from political science, we demonstrate how our new X-learner can be used to target treatment regimes and to shed light on underlying mechanisms.  A software package is provided that implements our methods.

	 \end{abstract}
	 \noindent
\dropcap{W}ith the rise of large data sets containing fine-grained information about humans and their behavior, researchers, businesses, and policymakers are increasingly interested in how treatment effects vary across individuals and contexts. They wish to go beyond the information provided by estimating the Average Treatment Effect (ATE) in randomized experiments and observational studies. Instead, they often seek to estimate the Conditional Average Treatment Effect (CATE) to personalize treatment regimes and to better understand causal mechanisms. We introduce a new estimator called the X-learner, and we characterize it and many other CATE estimators within a unified meta-learner framework.  Their performance is compared using broad simulations, theory, and two data sets from randomized field experiments in political science.


In the first randomized experiment, we estimate the effect of a mailer on voter turnout \cite{GerberGreenLarimer} and, in the second, we measure the effect of door-to-door conversations on prejudice against gender-nonconforming individuals \cite{transphobia}.
In both experiments, the treatment effect is found to be non-constant, and we quantify this heterogeneity by estimating the CATE. We obtain insights into the underlying mechanisms, and the results allow us to better target the treatment.  


To estimate the CATE, we build on regression or supervised learning methods in statistics and machine learning, which are successfully used in a wide range of applications. Specifically, we study meta-algorithms (or meta-learners) for estimating the CATE { in a binary treatment setting. Meta-algorithms} decompose estimating the CATE into several sub-regression problems that can be solved with any regression or supervised learning method.

The most common meta-algorithm for estimating heterogeneous treatment effects takes two steps. {First, it uses so-called base learners to estimate the conditional expectations of the outcomes separately for units under control and those under treatment}. Second, it takes the difference between these estimates. This approach has been analyzed when the base learners are linear regression \cite{foster2013subgroup} or tree-based methods \cite{athey2015machine}. When used with trees, this has been called the \textit{Two-Tree} estimator and we will therefore  refer to the general mechanism of estimating the response functions separately as the \emph{T-learner}, ``T’’ being short for ``\textit{two}.’’ 

Closely related to the T-learner is the idea of estimating the outcome using all of the features and the treatment indicator,  without giving the treatment indicator a special role. The predicted CATE for an individual unit is then the difference between the predicted values when the treatment assignment indicator is changed from control to treatment, with all other features held fixed. This meta-algorithm has been studied with BART \cite{hill2011bayesian, green2012modeling} and regression trees \cite{athey2015machine} as the base learners. We refer to this meta-algorithm as the \emph{S-learner}, since it uses a ``\textit{single}’’ estimator.

Not all methods that aim to capture the heterogeneity of treatment effects fall in the class of meta-algorithms. 
 For example, some researchers analyze heterogeneity by estimating average treatment effects for meaningful subgroups \cite{hansen2009attributing}.  
Another example is causal forests \cite{wager2015estimation}. {Since causal forests are RF-based estimators, they can be compared to meta-learners with RFs in simulation studies. We will see that causal forests and the meta-learners used with RFs perform comparably well, but the meta-learners with other base learners can significantly outperform causal forests.}

The main contribution of this paper is the introduction of a new meta-algorithm: the \emph{X-learner}, which builds on the T-learner and uses each observation in the training set in an ``X”--like shape. Suppose that we could observe the individual treatment effects directly. We could then estimate the CATE function by regressing the difference of individual treatment effects on the covariates. Structural knowledge about the CATE function (e.g., linearity, sparsity, or smoothness) could be taken into account by either picking a particular regression estimator for CATE or using an adaptive estimator that could learn these structural features.
Obviously, we do not observe individual treatment effects because we observe the outcome either under control or under treatment, but never both. The X-learner uses the observed outcomes to estimate the unobserved individual treatment effects. It then estimates the CATE function in a second step as if the individual treatment effects were observed. 

The X-learner has two key advantages over other estimators of the CATE. First, it can provably adapt to structural properties such as the sparsity or smoothness of the CATE. This is particularly useful since the CATE is often zero or approximately linear \cite{Kalla2017, SekhonSATO17}.
Secondly, it is particularly effective when the number of units in one treatment group (usually the control group) is much larger than in the other. This occurs because (control) outcomes and covariates are easy to obtain using data collected by administrative agencies, electronic medical record systems, or online platforms. This is the case in our first data example, where election turnout decisions in the U.S. are recorded by local election administrators for all registered individuals.

The rest of the paper is organized as follows. We start with a formal introduction of the meta-learners and provide intuitions for why we can expect the X-learner to perform well when the CATE is smoother than the response outcome functions and when the sample sizes between treatment and control are unequal.
We then present the results of an extensive simulation study and provide advice for practitioners before we present theoretical results on the convergence rate for different meta-learners. Finally, we examine two field experiments using several meta-algorithms and illustrate how the X-learner can find useful heterogeneity with fewer observations. 

\subsection*{Framework and Definitions} \label{notation}
We employ the Neyman--Rubin potential outcome framework \citep{rubin1974estimating, splawa1990application}, and assume a superpopulation or distribution $\Pcal$ from which a realization of $N$ independent random variables is given as the training data. That is, $(Y_i(0), Y_i(1), X_i, W_i) \sim \Pcal$, where $X_i \in \mathbb{R}^d$ is a $d$-dimensional covariate or feature vector, $W_i \in \{0, 1\}$ is the treatment assignment indicator (to be defined precisely later), $Y_i(0) \in \mathbb{R}$ is the potential outcome of unit $i$ when $i$ is assigned to the control group, and $Y_i(1)$ is the potential outcome when $i$ is assigned to the treatment group.
With this definition, the Average Treatment Effect is defined as
$$
\mbox{ATE} := \E[Y(1) - Y(0)].
$$
It is also useful to define the response under control, $\mu_0,$ and the response under treatment, $\mu_1,$ as
$$
\mu_0(x) := \E[Y(0) | X = x] ~~~~\mbox{and} ~~~~ \mu_1(x) := \E[Y(1) | X = x].
$$
Furthermore, we use the following representation of $\Pcal$:
\begin{equation}\label{model:basic}
\begin{aligned}
X &\sim \Lambda,\\
W&\sim \mbox{Bern}(e(X)), \\
Y(0) &= \mu_0(X) + \varepsilon(0), \\
Y(1) &= \mu_1(X) + \varepsilon(1), 
\end{aligned} 
\end{equation}
where $\Lambda$ is the marginal distribution of $X$, $\varepsilon(0)$ and $\varepsilon(1)$ are zero-mean random variables and independent of $X$ and $W$, and $e(x) = \P(W=1 | X= x)$ is the propensity score.

The fundamental problem of causal inference is that for each unit in the training data set, we observe either the potential outcome under control ($W_i = 0$), or the potential outcome under treatment ($W_i = 1$) but never both. Hence we denote the observed data as
$$
\D = (\Yobs_i, X_i, W_i)_{1 \le i \le N},
$$
with $\Yobs_i = Y_i(W_i)$. Note that the distribution of $\D$ is specified by $\Pcal$.
To avoid the problem that with a small but non-zero probability all units are under control or under treatment, we will analyze the behavior of different estimators conditional on the number of treated units.
That is, for a fixed $n$ with $0 < n < N$, we condition on the event that
$$
\sum_{i = 1}^N W_i = n.
$$
This will enable us to state the performance of an estimator in terms of the number of treated units $n$ and the number of control units $m = N - n$.

For a new unit $i$ with covariate vector $x_i$, in order to decide whether to give the unit the treatment, we wish to estimate the Individual Treatment Effect (ITE) of unit $i$, $D_i$, which is defined as 
$$ 
D_i := Y_i(1) - Y_i(0).
$$
However, we do not observe $D_i$ for any unit, and $D_i$ is not identifiable without strong additional assumptions { in the sense that one} can construct data-generating processes with the same distribution of the observed data, but a different $D_i$ (Example \ref{Exa:ITEnotIdfbl}).
Instead, we will estimate the CATE function, which is defined as
$$
\tau(x) := \E\Big[ D \Big| X = x \Big] = \E\Big[Y(1)- Y(0)\Big|X = x\Big],
$$
and we note that the best estimator for the CATE is also the best estimator for the ITE in terms of the MSE. 
To see that, let $\hat \tau_i$ be an estimator for $D_i$ and decompose the MSE at $x_i$ 
\begin{equation}\label{EMSE}
\begin{aligned}
\E &\left[(D_i - \hat \tau_i )^2 | X_i = x_i\right] \\
= &
\E \left[(D_i - \tau(x_i) )^2 | X_i = x_i\right] + 
\E \left[( \tau(x_i)  - \hat \tau_i)^2 \right]. 
\end{aligned}
\end{equation}
Since we cannot influence the first term in the last expression, the estimator that minimizes the MSE for the ITE of $i$ also minimizes the MSE for the CATE at $x_i$.

In this paper, we are interested in estimators with a small Expected Mean Squared Error (EMSE) for estimating the CATE,
\begin{equation*}
\EMSE(\Pcal, \hat \tau) = \E\left[(\tau(\X) - \hat \tau(\X))^2\right].
\end{equation*}
The expectation is here taken over $\hat \tau$ and $\X \sim \Lambda$, where $\X$ is independent of $\hat \tau$.

To aid our ability to estimate $\tau$, we need to assume that there are no hidden confounders \cite{rosenbaum1983central}:
\begin{condition}~
	\label{equation:conditionalUnconf}
	\begin{equation*}
	\left(\varepsilon(0), \varepsilon(1) \right) \perp W | X.
	\end{equation*}
\end{condition}
This assumption is, however, not sufficient to identify the CATE. One additional assumption that is often made to obtain identifiability of the CATE in the support of $X$ is to assume that the propensity score is bounded away from 0 and 1:
\begin{condition}
	\label{cond:overlab}
	There exists $\emin$ and $\emax$ such that for all $x$ in the support of $X$,
	\begin{equation*}
	0 < \emin < e(x)  < \emax < 1.
	\end{equation*}
\end{condition}


	\section*{Meta-algorithms}\label{section:Xlearner}    

In this section, we formally define a meta-algorithm (or meta-learner) for the CATE as the result of combining supervised learning or regression estimators (i.e., base learners) in a specific manner while allowing the base learners to take any form. Meta-algorithms thus have the flexibility to appropriately leverage different sources of prior information in separate sub-problems of the CATE estimation problem: they can be chosen to fit a particular type of data, and they can directly take advantage of existing data analysis pipelines. 

We first review both S- and T-learners, and we then propose the X-learner, which is a new meta-algorithm that can take advantage of unbalanced designs (i.e., the control or the treated group is much larger than the other group) and existing structures of the CATE (e.g., smoothness or sparsity).  Obviously, flexibility is a gain only if the base learners in the meta-algorithm match the features of the data and the underlying model well.

The T-learner takes two steps. First, the
control response function,
\begin{align*}
\mu_0(x) &= \E[Y(0)|X=x],
\end{align*}
is estimated by a base learner, which could be any supervised learning or regression estimator using the observations in the control group, $\{(X_i, Y_i)\}_{W_i=0}$. We denote the estimated function as $\hat \mu_0$.
Second, we estimate the treatment response function, 
\begin{align*}
\mu_1(x) &= \E[Y(1)|X=x],
\end{align*}
with a potentially different base learner, using the treated observations and denoting the estimator by $\hat \mu_1$. A T-learner is then obtained as 
\begin{equation} \label{Tlearner}
\hat\tau_T(x) = \hat \mu_1(x) - \hat \mu_0(x).
\end{equation}
Pseudocode for this T-learner can be found in  Algorithm \ref{algo:Tlearner}.

In the S-learner, the treatment indicator is included as a feature similar to all the other features without the indicator being given any special role. We thus
estimate the combined response function,
\begin{align*}
\mu(x,w) &:= \E[Y^{obs}|X=x, W= w],
\end{align*}
using any base learner (supervised machine learning or regression algorithm) on the entire data set. We denote the estimator as $\hat \mu$.
The CATE estimator is then given by
\begin{equation} \label{Slearner}
\hat\tau_S(x) = \hat \mu(x, 1) - \hat \mu(x, 0),
\end{equation}
and pseudocode is provided in Algorithm \ref{algo:Slearner}.

There are other meta-algorithms in the literature, but we do not discuss them here in detail because of limited space. For example, one may transform the outcomes so that any regression method can estimate the CATE directly (Algorithm \ref{algo:Flearner}) \cite{athey2015machine, tian2014simple, Powers2017}. In our simulations, this algorithm performs poorly, and we do not discuss it further, but it may do well in other settings.

\subsection*{X-learner}

We propose the X-learner, and provide an illustrative example to highlight its motivations. The basic idea of the X-learner can be described in three stages:
\begin{enumerate}
	\item
	Estimate the response functions
	\begin{align}
	\mu_0(x) &= \E[Y(0)|X=x], \mbox{ and}\\
	\mu_1(x) &= \E[Y(1)|X=x],
	\end{align}
	using any supervised learning or regression algorithm and denote the estimated
	functions $\hat \mu_0$ and $\hat \mu_1$. The algorithms used are referred to as the base
	learners for the first stage.
	
	\item
	Impute the treatment effects for the individuals in the treated group, based on the control outcome estimator, and the treatment effects for the individuals in the control group, based on the treatment outcome estimator, that is,
	\begin{align}
	\Dt^1_{i} &:=         Y^1_{i}                         -     \hat \mu_0(X^{1}_{i}), \mbox{ and}\\
	\Dt^0_{i} &:=     \hat \mu_1(X^{0}_{i})       -     Y^{0}_{i},
	\end{align}
	and call these the imputed treatment effects.
	{ Note that if $\hat \mu_0 = \mu_0$ and $\hat \mu_1 = \mu_1$, then $\tau(x)= \E[\Dt^1|X=x] = \E[\Dt^0|X=x]$.}
	
	{ Employ any supervised learning or regression method(s)} to estimate $\tau(x)$ in two ways: using the imputed treatment effects as the response variable in the treatment group to obtain $\hat \tau_1(x)$, and similarly in the control group to obtain $\hat \tau_0(x)$.
	Call the supervised learning or regression algorithms base learners of the second stage. 
	
	\item 
	Define the CATE estimate by a weighted average of the two estimates
	in Stage 2:
	\begin{equation} \label{equ:3rdStep}
	\hat \tau(x)     = g(x) \hat \tau_0(x) + (1 - g(x))    \hat \tau_1(x),     
	\end{equation}
	where $g \in [0,1]$ is a weight function.
	
\end{enumerate}
See Algorithm \ref{aglo:Xlearner} for pseudocode.

\begin{remark} 
	$\hat \tau_0$ and $\hat \tau_1$ are both estimators for $\tau$, while $g$ is chosen to combine these estimators to one improved estimator $\hat \tau$.
	
	Based on our experience, we observe that it is good to use an estimate of the propensity score for $g$, so that $g = \hat e$, but it also makes sense to choose $g = 1$ or $0$, if the number of treated units is very large or small compared to the number of control units.
	
	For some estimators, it might even be possible to estimate the covariance matrix of $\hat \tau_1$ and $\hat \tau_0$. One may then wish to choose $g$ to minimize the variance of $\hat \tau$. 
\end{remark}

\subsection*{Intuition behind the meta-learners}\label{section:XlearnerMotivation}
%
%
%
%

\begin{figure}
	\centering
	\includegraphics[width=1\linewidth]{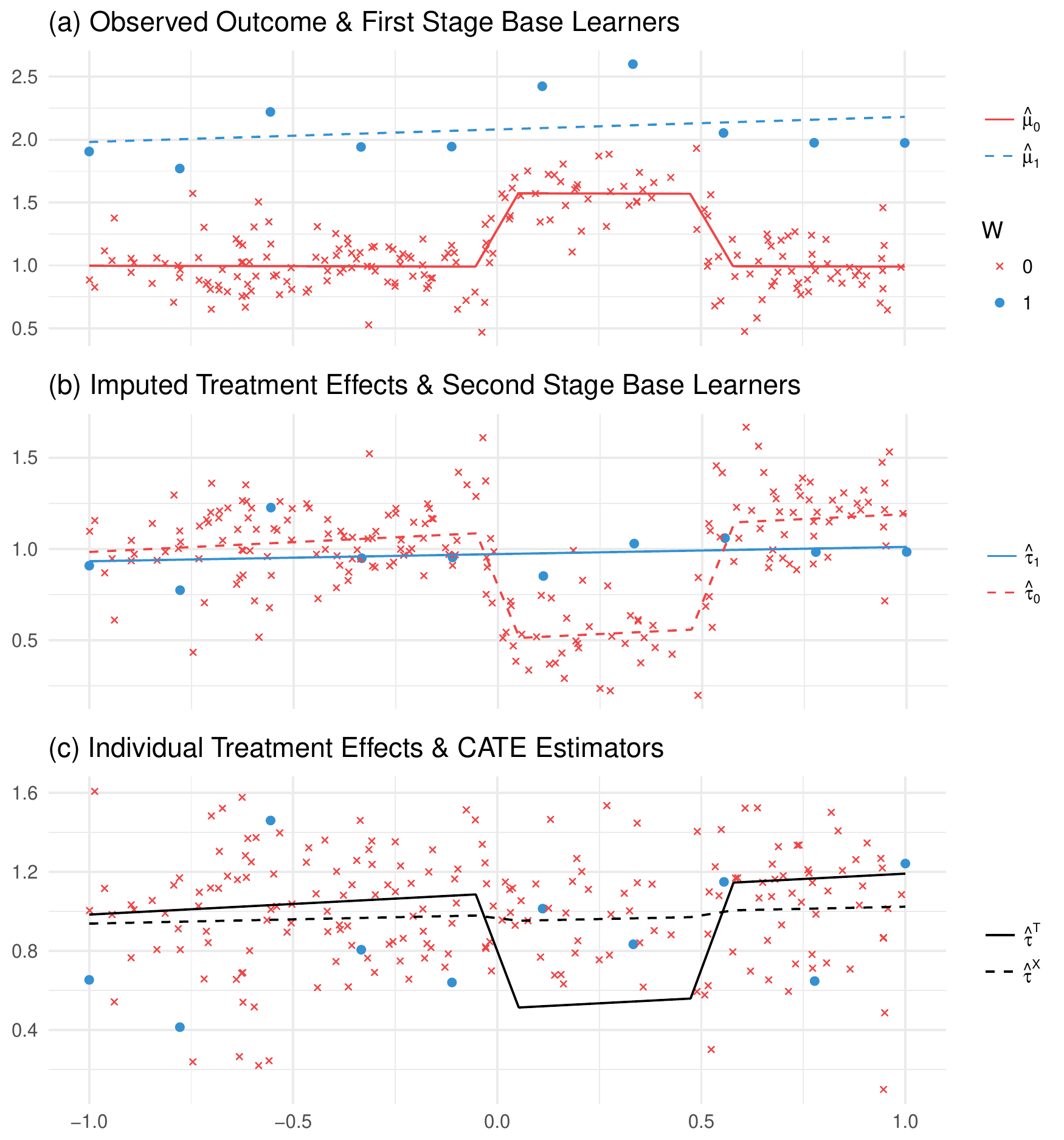}
	\caption{Intuition behind the X-learner with an unbalanced design.} \label{XIntuition}
\end{figure}

The X-learner can use information from the control group to derive better estimators for the treatment group and vice versa.  We will illustrate this using a simple example. Suppose that we want to study a treatment, and we are interested in estimating the CATE as a function of one covariate $x$. We observe, however, very few  units in the treatment group and many units in the control group. This situation often arises with the growth of administrative and online data sources: data on control units is often far more plentiful than data on treated units. Figure \ref{XIntuition}(a) shows the outcome for units in the treatment group (circles) and the outcome of unit in the untreated group (crosses). In this example, the CATE is constant and equal to one. 

For the moment, let us look only at the treated outcome. When we estimate $\mu_1(x) = \E[Y(1) | X= x]$, we must be careful not to overfit the data since we observe only 10 data points. We might decide to use a linear model, $\hat \mu_1(x)$ (dashed line), to estimate $\mu_1$. 
For the control group, we notice that observations with $x \in [0, 0.5]$ seem to be different, and we end up modeling $\mu_0(x) = \E[Y(0)|X=x]$ with a piecewise linear function with jumps at 0 and 0.5 (solid line).  This is a relatively complex function, but we are not worried about overfitting since we observe many data points.

The \emph{T-learner} would now use estimator $\hat \tau_T(x) = \hat \mu_1(x) - \hat \mu_0(x)$ (see Figure \ref{XIntuition}(c), solid line), which is a relatively complicated function with jumps at 0 and 0.5, while the true $\tau(x)$ is a constant.
This is, however, problematic because we are estimating a complex CATE function, based on ten observations in the treated group.

When choosing an estimator for the treatment group, we correctly avoided overfitting, and we found a good estimator for the treatment response function and, as a result, we chose a relatively complex estimator for the CATE, namely, the quantity of interest.
We could have selected a piecewise linear function with jumps at 0 and 0.5, but this, of course, would have been unreasonable when looking only at the treated group. If, however, we were to also take the control group into account, this function would be a natural choice.
In other words, we should change our objective for $\hat \mu_1$ and $\hat \mu_0$. We want to estimate $\hat \mu_1$ and $\hat \mu_0$ in such a way that their difference is a good estimator for $\tau$.

The \emph{X-learner} enables us to do exactly that. It allows us to use structural information about the CATE to make efficient use of an unbalanced design.  The first stage of the X-learner is the same as the first stage of the T-learner, but in its second stage, the estimator for the controls is subtracted from the observed treated outcomes and similarly the observed control outcomes are subtracted from estimated treatment outcomes to obtain the imputed treatment effects, 
\begin{equation*}
	\begin{aligned}
		\Dt^1_{i} &:=         Y^{1}_{i}                       -     \hat \mu_0(X^{1}_{i}), \\
		\Dt^0_{i} &:=     \hat \mu_1(X^{0}_{i})       -     Y^{0}_{i}.
	\end{aligned}
\end{equation*}
Here we use the notation that $Y^{0}_{i}$ and $Y^{1}_{i}$ are the $i$th observed outcome of the control and the treated group, respectively. $X^{1}_{i}$, $X^{0}_{i}$ are the corresponding feature vectors. 
Figure \ref{XIntuition}(b) shows the imputed treatment effects, $\Dt$.
By choosing a simple---here linear---function to estimate $\tau_1(x) = \E[\Dt^1 | X^1 = x]$ we effectively estimate a model for $\mu_1(x) = \E[Y^1 | X^1 = x]$, which has a similar shape to $\hat \mu_0$.
{By choosing a relatively poor model for $\mu_1(x)$, $\Dt^0$ (the red crosses in Figure \ref{XIntuition}(b)) are relatively far away from $\tau(x)$, which is constant and equal to 1.}
The model for $\tau_0(x) = \E[\Dt^0|X=x]$ will thus be relatively poor. However, our final estimator combines these two estimators according to 
\begin{equation*} 
\hat \tau(x)     = g(x) \hat \tau_0(x) + (1 - g(x))    \hat \tau_1(x).
\end{equation*}
If we choose $g(x) = \hat e(x)$, an estimator for the propensity score,
$\hat \tau$ will be very similar to $\hat \tau_1(x)$, since we
have many more observations in the control group; i.e., $\hat e(x)$ is small. 
Figure \ref{XIntuition}(c) shows the T-learner and the X-learner. 

It is difficult to assess the general behavior of the S-learner in this example because we must choose a base learner. For example, when we use RF as the base learner for this data set, the S-learner's first split is on the treatment indicator in 97.5\% of all trees in our simulations because the treatment assignment is very predictive of the observed outcome, $\Yobs$ (see also Figure \ref{fig:snosplit}). From there on, the S-learner and the T-learner are the same, and we observe them to perform similarly poorly in this example.


	\section*{Simulation Results}
In this section, we conduct a broad simulation study to compare the different meta-learners. In particular, we summarize our findings and provide general remarks on the strengths and weaknesses of the S-, T-, and X-learners, while deferring the details to the Supporting Information (SI).  The simulations are key to providing an understanding of the performance of the methods we consider for model classes that are not covered by our theoretical results.

Our simulation study is designed to consider a range of situations. We include conditions under which the S-learner or the T-learner is likely to perform the best, as well as simulation setups proposed by previous researchers \cite{wager2015estimation}. We consider cases where the treatment effect is zero for all units (and so pooling the treatment and control groups would be beneficial), and cases where the treatment and control response functions are completely different (and so pooling would be harmful). We consider cases with and without confounding,\footnote{{Confounding here refers to the existence of an unobserved covariate that influences both the treatment variable, $W$, and at least one of the portential outcomes $Y(0), Y(1)$.}} and cases with equal and unequal sample sizes across treatment conditions. All simulations discussed in this section are based on synthetic data. For details, please see Section \ref{section:simulation}. We provide additional simulations based on actual data when we discuss our applications. 

We compare the S-, T-, and X-learners with RF and BART as base learners. We implement a version of RF for which the tree structure is independent of the leaf prediction given the observed features, the so-called honest RF in an R package called \texttt{hte} \cite{KuenzelHTE}. This version of RF is particularly accessible from a theoretical point of view, it performs well in noisy settings, and it is better suited for inference \cite{scornet2015consistency, wager2015estimation}. For BART, our software uses the \texttt{dbarts} \cite{Hughbart} implementation for the base learner.

Comparing different base learners enables us to demonstrate two things. On the one hand, it shows that the conclusions we draw about the S-, T-, and X-learner are not specific to a particular base learner and, on the other hand, it demonstrates that the choice of base learners can make a large difference in prediction accuracy. The latter is an important advantage of meta-learners since subject knowledge can be used to choose base learners that perform well. For example, in Simulations \ref{sim:complexlin} and \ref{sim:globallin} the response functions are globally linear, and we observe that estimators that act globally such as BART have a significant advantage in these situations or when the data set is small. If, however, there is no global structure or when the data set is large, then more local estimators such as RF seem to have an advantage (Simulations \ref{sim:complexnonlin} and \ref{sim:piecwlin}).

We observe that the choice of meta-learner can make a large difference, and for each meta-learner there exist cases where it is the best-performing estimator. 

The S-learner treats the treatment indicator like any other predictor. For some base learners such as $k$-nearest neighbors it is not a sensible estimator, but for others it can perform well. { Since the treatment indicator is given no special role, algorithms such as the lasso and RFs can completely ignore the treatment assignment by not choosing/splitting on it.} This is beneficial if the CATE is in many places 0 (Simulations \ref{sim:globallin} and \ref{sim:piecwlin}), but---as we will see in our second data example---the S-learner can be biased toward 0.

The T-learner, on the other hand, does not combine the treated and control groups. This can be a disadvantage when the treatment effect is simple because by not pooling the data, it is more difficult for the T-learner to mimic a behavior that appears in both the control and treatment response functions (e.g., Simulation \ref{sim:globallin}).  If, however, the treatment effect is very complicated, and there are no common trends in $\mu_0$ and $\mu_1$, then the T-learner performs especially well (Simulations \ref{sim:complexlin} and \ref{sim:complexnonlin}).

The X-learner performs particularly well when there are structural assumptions on the CATE or when one of the treatment groups is much larger than the other (Simulation \ref{sim:unbalanced} and \ref{sim:complexnonlin}). In the case where the CATE is 0, it usually does not perform as well as the S-learner, but it is significantly better than the T-learner (Simulations \ref{sim:globallin}, \ref{sim:piecwlin}, and \ref{sim:betacon}), and in the case of a very complex CATE, it performs better than the S-learner and it often outperforms even the T-learner (Simulations \ref{sim:complexlin} and \ref{sim:complexnonlin}).  These simulation results lead us to the conclusion that unless one has a strong belief that the CATE is mostly 0, then, as a rule of thumb, one should use the X-learner with BART for small data sets and RF for bigger ones. In the sequel, we will further support these claims with additional theoretical results and empirical evidence from real data and data-inspired simulations.


	\section*{Comparison of Convergence Rates} 
\label{section:ConvergenceRate}

In this section, we provide conditions under which the X-learner can be proven to outperform the T-learner in terms of pointwise estimation rate.
These results can be viewed as attempts to rigorously formulate intuitions regarding when the X-learner is desirable. They corroborate our intuition that the X-learner outperforms the T-learner when one group is much larger than the other group or when the CATE function has a simpler form than those of the underlying response functions themselves.

Let us start by reviewing some of the basic results in the field of minimax nonparametric regression estimation \cite{stone1982optimal, birge1983approximation, gyorfi2006distribution, bickel2015mathematical}. In the standard regression problem, one observes $N$ independent and identically distributed tuples $(X_i, Y_i)_i \in  \R^{d\times N} \times \R^N$ generated from some distribution $\mathcal P$ and one is interested in estimating the conditional expectation of $Y$ given some feature vector $x$, $\mu(x) = \E[Y|X=x]$. 
The error of an estimator $\hat \mu_N$ can be evaluated by the Expected Mean Squared Error (EMSE),
$$
\EMSE(\mathcal{P}, \hat \mu_N) 
=
\E[(\hat \mu_N(\X) - \mu(\X) )^2].
$$
For a fixed $\Pcal$, there are always estimators which have a very small EMSE. For example, choosing $\hat \mu_N \equiv \mu$ would have no error. 
However, $\Pcal$ and thus $\mu$ is unknown. 
Instead, one usually wants to find an estimator which achieves a small EMSE for a relevant set of distributions (such a set is relevant if it captures domain knowledge or prior information of the problem). 
To make this problem feasible, a typical approach is the minimax approach where one analyzes the worst performance of an estimator over a class or family, $F$, of distributions  \cite{Tsybakov2009}. The goal is to find an estimator which has a small EMSE for all distributions in this family. 
For example, if $F_0$ is the family of distributions $\mathcal{P}$ such that $X \sim \mbox{Unif}[0,1]$, $Y = \beta X + \varepsilon$, $\varepsilon \sim N(0,1)$, and $\beta \in \R$, then it is well known that the OLS estimator achieves the optimal parametric rate. That is, there exists a constant $C \in \R$ such that for all $\mathcal{P} \in F_0$,
$$
\EMSE(\mathcal{P}, \hat \mu_N^{\mbox{\tiny{OLS}}})  \le C N^{-1}.
$$
If, however, $F_1$ is the family of all distributions $\mathcal{P}$ such that $X \sim \mbox{Unif}[0,1]$, $Y \sim \mu(X) + \varepsilon$ and $\mu$ is a Lipschitz continuous function with bounded Lipschitz constant, then there exists no estimator that achieves the parametric rate uniformly for all possible distributions in $F_1$. 
To be precise, we can at most expect to find an estimator that achieves a rate of $N^{-2/3}$ and there exists a constant $C'$, such that
$$
\liminf_{N \rightarrow \infty} \inf_{\hat \mu_N} \sup_{\Pcal \in F_1}
\frac{\EMSE(\Pcal, \hat \mu_N)}{N^{-2/3}}
> C' > 0.
$$
Estimators such as the Nadaraya--Watson and $k$-nearest neighbors can achieve this optimal rate \citep{bickel2015mathematical,gyorfi2006distribution}. 

Crucially, the fastest rate of convergence that holds uniformly for a family $F$ is a property of the family to which the underlying data-generating distribution belongs. 
It will be useful for us to define sets of families for which particular rates are achieved. 
\begin{definition}[Families with bounded minimax rate] \label{def:muclasses} 
	For $a \in (0,1]$, we define $S(a)$ to be the set of all families, $F$, with a minimax rate of at most $N^{-a}$. 
\end{definition}
	Note that for any family $F \in S(a)$ there exists an estimator $\hat \mu$ and a constant $C$ such that for all $N \ge 1$,
	$$
		\sup_{\Pcal \in F}
		\EMSE(\Pcal, \hat \mu_N) 
		\le 
		C N^{-a}.	
	$$
From the examples above, it is clear that $F_0 \in S(1)$ and $F_1 \in S(2/3)$.

Even though the minimax rate of the EMSE is not very practical since one rarely knows that the true data-generating process is in some reasonable family of distributions,  it is nevertheless one of the very few useful theoretical tools to compare different nonparametric estimators. If for a big class of distributions, the worst EMSE of an estimator $\hat \mu^A$ is smaller than the worst EMSE of $\hat \mu^B$, then one might prefer estimator $\hat \mu^A$ over estimator $\hat \mu^B$. Furthermore, if the estimator of choice does not have a small error for a family that we believe based on domain information could be relevant in practice, then we might expect $\hat \mu$ to have a large EMSE in real data.

\subsubsection*{Implication for CATE estimation}
Let us now employ the minimax approach to the problem of estimating the CATE. 
Recall that we assume a super--population of random variables $(Y(0), Y(1), X, W)$ according to some distribution $\Pcal$. We observe $n$ treated and $m$ control units from this super-population,
and our goal is to find an estimator $\hat \tau_{mn}$ which has a small EMSE,
$$
\EMSE(\Pcal, \hat \tau_{mn})
=
\E[(\tau(\X) - \hat \tau_{mn}(\X))^2].
$$
Similar to the regression case, we can study the performance of estimators when $\Pcal$ lies in some family of distributions. In the following we will introduce families for which estimators based on the X-learner achieve provably a smaller EMSE than estimators based on the T-learner. 

Similar to Definition \ref{def:muclasses}, we define sets of families of super-populations. 
\begin{definition}[Superpopulations with given rates] \label{def:Samat}
	Recall the general characterization of a superpopulation given in \refb{model:basic}.
	For $a_\mu, a_\tau \in (0,1]$, we define $S(a_\mu, a_\tau)$ to be the set of all families of distributions $\Pcal$ of $(Y(0), Y(1), X, W)$ such that
	\begin{enumerate}
		\item ignorability holds, 
		\item the distribution of $(X, Y(0))$ given $W = 0$ is in a class $F_0 \in S(a_\mu)$,
		\label{Assumption_res_0}
		\item the distribution of $(X, Y(1))$ given $W = 1$ is in a class $F_1 \in S(a_\mu)$, and
		\label{Assumption_res_1}
		\item the distribution of $(X, \mu_1(X) - Y(0))$ given $W = 0$ is in a class $F_{\tau 0} \in S(a_\tau)$. 
		\label{Assumption_tau_0}
		\item the distribution of $(X, Y(1) - \mu_0(X))$ given $W = 1$ is in a class $F_{\tau 1} \in S(a_\tau)$.         
		\label{Assumption_tau_1}
	\end{enumerate}
\end{definition}
A simple example of a family in $S(2/3, 1)$, would be the set of distributions $\Pcal$ for which $X \sim \mbox{Unif}[0,1]$, $W \sim \mbox{Bern}(1/2)$, $\mu_0$ is any Lipschitz continuous function, $\tau$ is linear, and $\varepsilon(0), \varepsilon(1) \sim N(0,1)$.

The difference between the T and X learner is that the T-learner estimates the response functions separately, and does not benefit from the possible smoothness of the CATE. Hence, we can conclude the following theorem.
\begin{theorem}[Minimax rates of the T-learner]
	For $\Pcal \in S(a_\mu, a_\tau)$, there exist base learners to be used in the T-learner so that the corresponding T-learner estimates the CATE at a rate of 
	$$\mathcal{O}(m^{-a_\mu} + n^{-a_\mu}),$$ 
	but, in general, we cannot expect it to be any faster. 
\end{theorem}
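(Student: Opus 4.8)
The plan is to treat the upper and lower halves separately. For the upper bound I would instantiate the T-learner with the base learners guaranteed by Definition \ref{def:muclasses}. Since $\Pcal \in S(a_\mu,a_\tau)$, condition \ref{Assumption_res_0} places $(X,Y(0))\mid W=0$ in some $F_0\in S(a_\mu)$, so there is an estimator $\hat\mu_0$ and a constant $C_0$ whose EMSE against the conditional law of $X$ given $W=0$ is at most $C_0\,m^{-a_\mu}$; condition \ref{Assumption_res_1} supplies $\hat\mu_1$ with rate $n^{-a_\mu}$ against the law of $X$ given $W=1$. Writing $\tau-\hat\tau_T=(\mu_1-\hat\mu_1)-(\mu_0-\hat\mu_0)$ and applying $(a-b)^2\le 2a^2+2b^2$ yields
\begin{equation*}
\EMSE(\Pcal,\hat\tau_T)\le 2\,\E_\Lambda[(\mu_1-\hat\mu_1)^2]+2\,\E_\Lambda[(\mu_0-\hat\mu_0)^2].
\end{equation*}

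The one wrinkle is that the two base-learner guarantees are stated against the conditional feature laws, whereas the CATE EMSE integrates against the marginal $\Lambda$. Here I would invoke the overlap Condition \ref{cond:overlab}: since $\emin<e(x)<\emax$, the Radon--Nikodym derivatives of $\Lambda$ with respect to the laws of $X$ given $W=0$ and given $W=1$ are bounded by $1/(1-\emax)$ and $1/\emin$, so each term above is controlled by its conditional counterpart up to a constant. This gives $\EMSE(\Pcal,\hat\tau_T)\le C(m^{-a_\mu}+n^{-a_\mu})$ with $C$ depending only on the overlap constants and on $C_0,C_1$, establishing the positive claim.

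For the ``cannot be faster'' half I would exhibit a family in $S(a_\mu,a_\tau)$ on which no T-learner beats $m^{-a_\mu}+n^{-a_\mu}$, taking $a_\tau=a_\mu$ so that the CATE is no smoother than the responses. Let $X\sim\mathrm{Unif}[0,1]$, $e\equiv 1/2$, Gaussian noise, and let $\mu_0,\mu_1$ range independently over a Lipschitz class that is minimax-hard at rate $a_\mu$; conditions \ref{Assumption_res_0}--\ref{Assumption_res_1} then hold, and since a difference of Lipschitz functions is Lipschitz, conditions \ref{Assumption_tau_0}--\ref{Assumption_tau_1} hold as well, so the family lies in the required class. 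I would then bound the worst case over two sub-families. On the first, freeze $\mu_1\equiv 0$ and vary $\mu_0$; then $\tau=-\mu_0$ and the treated sample $\mathcal{D}_1$ carries no information about $\mu_0$. Conditioning on $\mathcal{D}_1$ (hence on $\hat\mu_1=h$), the quantity $\hat\mu_0-h$ is an estimator of $\mu_0$ built from the $m$ control observations alone, so averaging against the minimax-optimal prior on the Lipschitz class forces $\sup\EMSE\ge c\,m^{-a_\mu}$. The symmetric sub-family with $\mu_0\equiv 0$ forces $\ge c\,n^{-a_\mu}$, and taking the maximum over the two yields a bound of order $m^{-a_\mu}+n^{-a_\mu}$.

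The main obstacle is precisely this lower bound, and within it the need to rule out cancellation: because $\hat\tau_T=\hat\mu_1-\hat\mu_0$, a naive bias--variance expansion leaves a cross term $-2\langle b_0,b_1\rangle$ between the two base-learner biases, and one must preclude a statistician from choosing base learners whose biases cancel in the difference---which, as oversmoothing shows, can genuinely accelerate the T-learner when $\mu_0$ and $\mu_1$ share a common shape. My device for avoiding this is exactly to freeze one response function to a constant on each sub-family: this decouples the information, making the relevant data independent of the varying target and collapsing the problem to an honest single-sample regression lower bound with no exploitable cross term. The remaining steps---verifying family membership and converting the Bayes-risk bound into a minimax statement---are standard.
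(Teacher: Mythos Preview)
Your upper-bound argument matches the paper's proof essentially line for line: the same $(a-b)^2\le 2a^2+2b^2$ split followed by a change of measure from $\Lambda$ to the conditional feature laws via the overlap bounds (the paper isolates this as a separate lemma giving $\E[g(X)\mid W=1]\le(\emax/\emin)\E[g(X)]$ and the analogous control-side inequality). One technicality you gloss over but the paper handles explicitly is that conditioning on $\sum_i W_i=n$ destroys independence of the observations; the paper inserts an extra layer of conditioning on the full assignment vector $W$ and proves that, given $W$, the treated tuples $(X_i,Y_i)_{W_i=1}$ are again i.i.d.\ from $\Pcal_1$, after which your Radon--Nikodym step goes through verbatim.

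On the lower bound you actually go further than the paper, which treats ``cannot be faster'' only heuristically---arguing that the two subproblems are solved independently so neither sample informs the other, and alluding to an EMSE expansion with a bias cross term without carrying it out. Your freezing device is sound and does kill the cross term. However, the family you build---$\mu_1\equiv 0$ with $\mu_0$ ranging over a Lipschitz ball---has $\tau=-\mu_0$ of the same complexity as $\mu_0$, so conditions \ref{Assumption_tau_0}--\ref{Assumption_tau_1} of Definition \ref{def:Samat} place it only in $S(a_\mu,a_\mu)$, not in $S(a_\mu,a_\tau)$ for a prescribed $a_\tau>a_\mu$. That regime---a CATE strictly smoother than the responses---is exactly the one motivating the whole paper, and your construction does not reach it: you would need a family where $\tau$ stays in the fast class while at least one response function is still minimax-hard for its own sample (e.g., fix $\tau$ and let $\mu_0=\mu_1$ range over the hard class, then argue the independent-sample structure of the T-learner prevents cancellation). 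Since the paper itself leaves this half informal, your proposal is at least as complete as the original, but you should flag the restriction to $a_\tau\le a_\mu$ rather than presenting it as covering the general statement.
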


The X-learner, on the other hand, can be seen as a weighted average of the two estimators, $\hat \tau_0$ and $\hat \tau_1$ (Eq.~\refb{equ:3rdStep}). Take for the moment, $\hat \tau_1$. It consists of an estimator for the outcome under control which achieves a rate of at most $a_\mu$, and an estimator for the imputed treatment effects which should intuitively achieve a rate of at most $a_\tau$. We therefore expect the following conjecture.
\begin{conjecture}[Minimax rates of the X-learner] \label{theo:Xrate}
	Under some conditions on $\mathcal{P} \in S(a_\mu, a_\tau)$, there exist base learners such that
	$\hat{\tau}_0$ and $\hat{\tau}_1$ in the X-learner achieve the rates,  
	\begin{equation*}
	\mathcal{O}(m^{-a_\tau} + n^{-a_\mu}) 
	\quad
	\mbox{and}
	\quad
	\mathcal{O}(m^{-a_\mu} + n^{-a_{\tau}}),
	\end{equation*}
	respectively.
\end{conjecture}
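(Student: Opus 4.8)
The plan is to prove the two rates separately. Since the roles of the treated and control groups are symmetric (swap $\hat\mu_0\leftrightarrow\hat\mu_1$ and $n\leftrightarrow m$), it suffices to analyze $\hat\tau_1$ and establish $\mathcal{O}(m^{-a_\mu} + n^{-a_\tau})$; the same argument with the roles exchanged then yields $\mathcal{O}(m^{-a_\tau} + n^{-a_\mu})$ for $\hat\tau_0$. Recall that $\hat\tau_1$ is obtained by applying a second-stage base learner to the $n$ treated pairs $(X^1_i, \Dt^1_i)$, where $\Dt^1_i = Y^1_i - \hat\mu_0(X^1_i)$. The central idea is to compare $\hat\tau_1$ with an \emph{oracle} second-stage estimator $\tilde\tau_1$ built by feeding the same base learner the uncontaminated responses $D^1_i := Y^1_i - \mu_0(X^1_i)$, formed with the true control response $\mu_0$. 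Because $\E[D^1 \mid X=x] = \mu_1(x)-\mu_0(x) = \tau(x)$, and by item 5 of Definition \ref{def:Samat} the law of $(X, D^1)$ given $W=1$ lies in a family $F_{\tau 1}\in S(a_\tau)$, the second-stage learner can be chosen so that $\EMSE(\Pcal,\tilde\tau_1) = \mathcal{O}(n^{-a_\tau})$.

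Next I would split the error as
$$
\E\bigl[(\hat\tau_1(\X) - \tau(\X))^2\bigr] \le 2\,\E\bigl[(\tilde\tau_1(\X) - \tau(\X))^2\bigr] + 2\,\E\bigl[(\hat\tau_1(\X) - \tilde\tau_1(\X))^2\bigr].
$$
The first term is $\mathcal{O}(n^{-a_\tau})$ by the oracle bound, so the whole problem reduces to controlling the perturbation term by $\mathcal{O}(m^{-a_\mu})$. The two inputs to the second stage differ only in their responses, by $\Dt^1_i - D^1_i = \mu_0(X^1_i) - \hat\mu_0(X^1_i)$, i.e.\ exactly the first-stage error sampled at the treated design points. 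The natural structural condition to impose — part of the ``some conditions'' in the statement — is that the second-stage learner be a linear smoother, $\hat\tau_1(x) = \sum_i w_i(x)\,\Dt^1_i$, with convex weights $w_i(x)\ge 0$, $\sum_i w_i(x)=1$, that depend on the design $\{X^1_j\}$ but not on the responses; this covers kernel and local-polynomial regression, $k$-nearest neighbors, and the honest random forests used in the paper. Under this assumption $\hat\tau_1(\X) - \tilde\tau_1(\X) = \sum_i w_i(\X)\,(\mu_0-\hat\mu_0)(X^1_i)$, and Jensen's inequality gives
$$
\E\bigl[(\hat\tau_1(\X) - \tilde\tau_1(\X))^2\bigr] \le \E\Bigl[\sum_i w_i(\X)\,(\mu_0-\hat\mu_0)(X^1_i)^2\Bigr].
$$

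To bound the right-hand side by $\mathcal{O}(m^{-a_\mu})$ I would use two facts. First, under ignorability and after conditioning on the group counts, $\hat\mu_0$ is a function of the control sample alone and is therefore independent of the treated covariates $\{X^1_i\}$; conditioning on $\{X^1_j\}$ then replaces each $(\mu_0-\hat\mu_0)(X^1_i)^2$ by the control-data expectation of the pointwise risk of $\hat\mu_0$ at $X^1_i$. Second, the overlap Condition \ref{cond:overlab} makes the treated covariate law and the control covariate law (with densities relative to $\Lambda$ proportional to $e$ and $1-e$) mutually absolutely continuous with density ratio bounded by a constant depending only on $\emin,\emax$; hence the integrated risk of $\hat\mu_0$ against the treated design distribution is within a constant of its risk against the control distribution, which is $\mathcal{O}(m^{-a_\mu})$ by item 2 of Definition \ref{def:Samat}.

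The main obstacle is the final passage from the population-integrated risk of $\hat\mu_0$ to the smoother-weighted empirical quantity $\sum_i w_i(\X)\,(\mu_0-\hat\mu_0)(X^1_i)^2$. Because the weights concentrate near the query point and are themselves functions of the $X^1_i$, this weighted average is not the EMSE, and controlling it rigorously requires either a pointwise (rather than $L^2$-integrated) rate for $\hat\mu_0$ — typically available only up to logarithmic factors and under extra smoothness — or a condition bounding the maximal weight, e.g.\ $n\,\max_i\sup_x w_i(x) = \mathcal{O}(1)$, so that the weighted empirical average can be compared to the population average of the first-stage error. This coupling between the two stages is precisely why the claim is stated as a conjecture; once the weighted empirical risk is tamed, the bound $\mathcal{O}(m^{-a_\mu})$ follows, and combining it with the oracle term completes the rate $\mathcal{O}(m^{-a_\mu} + n^{-a_\tau})$ for $\hat\tau_1$, with the symmetric argument giving $\mathcal{O}(m^{-a_\tau} + n^{-a_\mu})$ for $\hat\tau_0$.
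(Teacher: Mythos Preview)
The statement you are attempting is explicitly a \emph{conjecture} in the paper, not a theorem with a proof; the authors write that ``it turns out to be mathematically very challenging to give a satisfying statement of the extra conditions needed on $\mathcal{P}$'' and, instead of proving it, establish two concrete special cases (Theorem~\ref{theorem:lineartau}, linear CATE with OLS in the second stage; Theorem~\ref{Theorem:Lipschitz_Convergence}, Lipschitz responses with KNN in both stages). So there is no general proof in the paper to match, and your write-up is best read as an outline of \emph{why} the conjecture should hold together with an honest identification of the sticking point---which is appropriate.

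Your oracle decomposition is sound and in fact coincides with what underlies the paper's linear-CATE argument: for OLS, $\hat\beta-\beta=(X'X)^{-1}X'(\delta+\varepsilon)$ splits exactly into a perturbation piece $(X'X)^{-1}X'\delta$ and an oracle-noise piece $(X'X)^{-1}X'\varepsilon$. Note, though, that OLS is a linear smoother whose weights are neither nonnegative nor sum to one, so your convex-weights hypothesis already excludes the one second-stage learner for which the paper makes the argument rigorous; there the crucial device is the eigenvalue condition~\ref{theorem:lineartau:goodEV}, which gives $\E\|(X'X)^{-1}X'\delta\|^2 \le C_\Sigma\,\E[\delta_1^2]$ directly and hence reduces to the first-stage EMSE without any weighted-empirical-to-population step. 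In the Lipschitz/KNN case the paper does \emph{not} use your oracle split at all: it writes $\hat\tau_1=\hat\mu_1^n-k_1^{-1}\sum_i\hat\mu_0(X^1_{(i,n)})$, decomposes against $\mu_1$ and $\mu_0$ separately, and controls the $\mu_0$-part via pointwise Lipschitz bounds and nearest-neighbor distance lemmas, thereby sidestepping the EMSE-to-weighted-empirical passage you flag as the obstacle. Finally, your proposed remedy $n\max_i\sup_x w_i(x)=\mathcal{O}(1)$ is too strong for the smoothers you name (for $k$-NN it would force $k\asymp n$), so the gap you identify is real; the paper's two proofs succeed precisely because they exploit problem-specific structure (closed-form OLS algebra, Lipschitz pointwise control) rather than attempting the general stability bound you sketch.
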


It turns out to be mathematically very challenging to give a satisfying statement of the extra conditions needed on $\mathcal{P}$.
We will therefore discuss two cases where we do not need any extra conditions on $\mathcal{P}$, and we emphasize that we believe the conjecture to hold in much greater generality. 
In the first case (Theorem \ref{theorem:lineartau}), we discuss all families of distributions in $S(a_\mu, a_\tau )$ where the CATE is linear. This implies that $a_\tau = 1$, and we achieve the parametric rate in $n$. This is in particular important when the number of control units, $m$, is large. 
In Section \ref{section:balanced}, we discuss the other extreme where we don't have any assumptions on the CATE. In this case, there is nothing to be inferred from the control units about the treated units and vice versa. Consequently, the T-learner is in some sense the best strategy and achieves the minimax optimal rate of $\mathcal{O}(m^{-a_\mu} + n^{-a_\mu})$ and we show that, for example, under Lipschitz continuity of the response functions, the X-learner will achieve the same rate and is therefore minimax optimal as well. 

We also conduct an extensive simulation study (Section \ref{section:simulation}) in which we compare the different meta-learners combined with Random Forests and BART for many different situations. We find that neither learner is uniformly the best, but the X-learner is never the worst, and it performs particularly well, when the group sizes are very unbalanced, or the CATE function satisfies some regularity conditions. 
\subsection*{Smoothness conditions for the CATE} \label{section:unbalanced}
Even though it is theoretically possible that $a_\tau$ is similar to $a_\mu$, our experience with real data suggests that it is often larger (or the treatment effect is \textit{simpler} to estimate than the potential outcomes). 
Let us intuitively understand the difference between the T- and X-learners for a class $F \in S(a_\mu, a_\tau)$ with $a_\tau > a_\mu$. 
The T-learner splits the problem of estimating the CATE into the two subproblems of estimating $\mu_0$ and $\mu_1$ separately. By appropriately choosing the base learners, we can expect to achieve the minimax optimal rates of $a_\mu$,
\begin{equation}\label{rates:Tlearner}
	\begin{aligned} 
	\sup_{\Pcal_0 \in F_0} \EMSE(\Pcal_0,  \hat \mu_0^m) 
	\le C m^{-a_\mu},
	&&
	\mbox{and}
	\\
	\sup_{\Pcal_1 \in F_1} \EMSE(\Pcal_1,  \hat \mu_1^n) 
	\le C n^{-a_\mu},
	\end{aligned}
\end{equation}
where $C$ is some constant.
Those rates translate immediately to rates for estimating $\tau$, since
\begin{align*} \label{rates:Tlearnertau}
\sup_{\Pcal \in F} &\EMSE(\Pcal,  \hat \tau^T_{nm}) 
\\&\le
2
\sup_{\Pcal_0 \in F_0} \EMSE(\Pcal_0,  \hat \mu_0^m) 
+~
2
\sup_{\Pcal_1 \in F_1} \EMSE(\Pcal_1,  \hat \mu_1^n) \notag
\\&=
2C \left( m^{-a_\mu} +n^{-a_\mu} \right).
\end{align*}
In general, we cannot expect to do better than this, when using an estimation strategy that falls in the class of T-learners, because the subproblems in Equation~\refb{rates:Tlearner} are treated completely independently and there is nothing to be learned from the treatment group about the control group and vice versa.

In some cases, we observe that the number of control units is much larger than the number of treated units, $ m \gg n$. This happens for example if we test a new treatment and we have a large number of previous (untreated) observations that can be used as the control group. In that case, the bound on the EMSE of the T-learner for the CATE will be dominated by the regression problem for the treated group,
\begin{equation} \label{rates:Tlearnertau+bigm}
\sup_{\Pcal \in F} \EMSE(\Pcal,  \hat \tau^T_{nm}) 
= 
\sup_{\Pcal_1 \in F_1} \EMSE(\Pcal_1,  \hat \mu_1^n)
\le
C n^{-a_\mu}.
\end{equation}
This is an improvement, but it still does not lead to the fast rate, $a_\tau$.
The X-learner, however, can achieve the fast rate $a_\tau$. An expansion of the EMSE into two squared error terms and also a cross term involving biases can be used to show that the T-learner cannot achieve this fast rate in general in the unbalanced case of $m>>n$.
To see the faster rate for the X-learner, recall that the number of control units is assumed so large that $\mu_0$ can be predicted almost perfectly and choose the weighing function $g$ equal to 0 in Equation \refb{equ:3rdStep}.
It follows that the error of the first stage of the X-learner is negligible and the imputed treatment effects for the treated group satisfy $D^1_i = \tau(X_i(1)) + \varepsilon_i$. 
Per Assumption \ref{Assumption_tau_1} in Definition \ref{def:Samat}, $\E [D^1 |X =x]$ can now be estimated using an estimator achieving the desired rate of $a_\tau$,
$$
\sup_{\Pcal \in F} \EMSE(\Pcal,  \hat \tau^X_{nm}) 
\le
C n^{-a_\tau}.
$$
This is a substantial improvement over \refb{rates:Tlearnertau+bigm} when $a_\tau > a_\mu$ and intuitively demonstrates that, in contrast to the T-learner, the X-learner can exploit structural assumptions on the treatment effect. 
However, even for large $m$, we cannot expect to perfectly estimate $\mu_0$. The following theorem deals carefully with this estimation error when $\tau$ is linear, but the response functions can be estimated at any nonparametric rate.

\begin{theorem} \label{theorem:lineartau}
	Assume we observe $m$ control units and $n$ treated units from some super population of independent and identically distributed observations $(Y(0), Y(1), X, W)$ coming from a distribution $\Pcal$ given in \refb{model:basic} that satisfies the following assumptions:
	\begin{enumerate}[label=A\arabic*]
		\item The error terms $\varepsilon_i$ are independent given $X$, with $\E[\varepsilon_i|X = x] = 0$ and $\var[\varepsilon_i|X = x] \le \sigma^2$.
		\item $X$ has finite second moments, $$\E[\|X\|_2^2] \le C_X.$$
		\item Ignorability holds.
		\item There exists an estimator $\hat \mu^m_0$ and $a >0$ with 
		$$
		\EMSE(\Pcal,  \hat \mu^m_0)   = \E[(\mu_0(X) - \hat \mu^m_0(X))^2] \le C_0 m^{-a}.
		$$ \label{condition:controlrate}
		\item The treatment effect is parametrically linear, $\tau(x) = x^T \beta$, with $\beta \in \R^d$.
		\item \label{theorem:lineartau:goodEV} The eigenvalues of the sample covariance matrix of $X^1$ are well conditioned, in the sense that there exists an $n_0$, such that 
		\begin{align}
		\sup_{n >n_0}
		\mie^{-1}(\hat \Sigma_n) \le C_\Sigma.
		\end{align}
	\end{enumerate}
	Then the X-learner with $\hat \mu_0^m$ in the first stage, OLS in the second stage and weigh function $g\equiv 0$ has the following upper bound on its EMSE: for all $n> n_0$,
	$$
	\EMSE(\Pcal,  \hat \tau^{mn})
	=
	\E\left[\left\|\tau(X) - \hat  \tau^{mn}(X)\right\|^2\right] \le C (m^{-a} + n^{-1})
	$$
	with $C = \max(C_0, \sigma^2 d)  C_X C_\Sigma$.
	In particular, if there are a lot of control units, such that $m \geq c_3 n^{1/a}$, then the X-learner achieves the parametric rate in $n$,
	$$
	\EMSE(\Pcal,  \hat \tau^{mn}) \le (1+c_3)Cn^{-1}.
	$$
\end{theorem}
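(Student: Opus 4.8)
The plan is to exploit linearity of $\tau$ so that, with $g\equiv 0$, the X-learner collapses to an ordinary least squares (OLS) fit of the imputed treatment effects $\Dt^1_i = Y^1_i - \hat\mu_0^m(X^1_i)$ on the treated covariates $X^1_i$. First I would rewrite each imputed response: since $Y^1_i = \mu_1(X^1_i) + \varepsilon_i = \mu_0(X^1_i) + (X^1_i)^\top\beta + \varepsilon_i$ (using A5 and $\mu_1 = \mu_0 + \tau$), the second stage is a genuine linear model
\[
\Dt^1_i = (X^1_i)^\top\beta + \varepsilon_i + \delta_i, \qquad \delta_i := \mu_0(X^1_i) - \hat\mu_0^m(X^1_i).
\]
Collecting the treated rows into a design matrix $\mathbf X$ of size $n\times d$ and writing $A := (\mathbf X^\top\mathbf X)^{-1}\mathbf X^\top$, the OLS estimate obeys $\hat\beta - \beta = A(\boldsymbol\varepsilon + \boldsymbol\delta)$, and the CATE estimate is $\hat\tau^{mn}(x) = x^\top\hat\beta$, so $\tau(x) - \hat\tau^{mn}(x) = x^\top(\beta - \hat\beta)$. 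The whole problem thus reduces to controlling $\E\|\hat\beta - \beta\|^2$, with the two summands $A\boldsymbol\varepsilon$ and $A\boldsymbol\delta$ tracking, respectively, the irreducible noise in the treated group and the error inherited from the first-stage estimate of $\mu_0$.

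The next step is to pass from the EMSE to a bound on $\E\|\hat\beta - \beta\|^2$. Because the test point $\X\sim\Lambda$ is independent of the training data, conditioning gives $\E_{\X}\!\big[(\X^\top(\hat\beta-\beta))^2 \mid \D\big] = (\hat\beta-\beta)^\top \E[\X\X^\top] (\hat\beta-\beta)$. The matrix $\E[\X\X^\top]$ is positive semidefinite with $\trace\,\E[\X\X^\top] = \E\|\X\|_2^2 \le C_X$ by A2, so its largest eigenvalue is at most $C_X$, and therefore $\EMSE(\Pcal,\hat\tau^{mn}) \le C_X\,\E\|\hat\beta-\beta\|^2$.

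The heart of the argument is a bias–variance split of $\E\|\hat\beta-\beta\|^2$ that relies on the sample-splitting structure: $\hat\mu_0^m$ is built only from control data and is hence independent of both the treated errors $\boldsymbol\varepsilon$ and the treated covariates $\mathbf X$. Conditioning on $\mathbf X$ and $\hat\mu_0^m$, the vector $\boldsymbol\delta$ is deterministic while $\E[\boldsymbol\varepsilon\mid\mathbf X,\hat\mu_0^m]=0$ by A1, so the cross term vanishes exactly and
\[
\E\big[\|\hat\beta-\beta\|^2 \mid \mathbf X,\hat\mu_0^m\big] = \|A\boldsymbol\delta\|^2 + \trace\!\big(A\,\cov(\boldsymbol\varepsilon\mid\mathbf X)\,A^\top\big).
\]
For the variance term, $\cov(\boldsymbol\varepsilon\mid\mathbf X)\preceq\sigma^2 I$ and $AA^\top = (\mathbf X^\top\mathbf X)^{-1} = n^{-1}\hat\Sigma_n^{-1}$, so its trace is at most $\sigma^2 d\,\mie^{-1}(\hat\Sigma_n)/n \le \sigma^2 d\,C_\Sigma/n$ for $n>n_0$ by A6. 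For the bias term I would use the operator-norm bound $\|A\boldsymbol\delta\|^2 \le \|A\|_{\mathrm{op}}^2\,\|\boldsymbol\delta\|^2 \le (C_\Sigma/n)\,\|\boldsymbol\delta\|^2$, again invoking A6; and since each treated $X^1_i\sim\Lambda$ is independent of $\hat\mu_0^m$, we have $\E[\delta_i^2] = \EMSE(\Pcal,\hat\mu_0^m) \le C_0 m^{-a}$ by A4, whence $\E\|\boldsymbol\delta\|^2 \le n C_0 m^{-a}$ and $\E\|A\boldsymbol\delta\|^2 \le C_\Sigma C_0 m^{-a}$. Combining the two pieces yields $\E\|\hat\beta-\beta\|^2 \le C_\Sigma\big(C_0 m^{-a} + \sigma^2 d\,n^{-1}\big)$; multiplying by $C_X$ and bounding $C_0$ and $\sigma^2 d$ by their maximum gives exactly $C = \max(C_0,\sigma^2 d)\,C_X\,C_\Sigma$, and the final corollary follows by substituting $m \ge c_3 n^{1/a}$, so that $m^{-a} \le c_3^{-a} n^{-1}$, into the main bound.

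I expect the main obstacle to be the bias term $\E\|A\boldsymbol\delta\|^2$, where $A$ and $\boldsymbol\delta$ are coupled through their shared dependence on the random treated design $\mathbf X$, and $\boldsymbol\delta$ additionally depends on the random first-stage fit $\hat\mu_0^m$. The resolution is twofold: treating the well-conditioning condition A6 as a deterministic (almost sure) control on $\|A\|_{\mathrm{op}}^2 \le C_\Sigma/n$ lets me pull the design-dependent factor out before taking expectations, and the independence of $\hat\mu_0^m$ from the treated sample is what licenses evaluating the first-stage EMSE bound of A4 at the treated points $X^1_i$. These two structural facts—the a.s.\ eigenvalue control and the control/treatment independence from sample splitting—are precisely what prevent the coupling from inflating the rate and are the steps requiring the most care.
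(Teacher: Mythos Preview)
Your approach is essentially the same as the paper's: write $\Dt^1_i = (X^1_i)^\top\beta + \varepsilon_i + \delta_i$, set $A=(\mathbf X^\top\mathbf X)^{-1}\mathbf X^\top$, bound $\EMSE \le C_X\,\E\|\hat\beta-\beta\|^2$ via Cauchy--Schwarz and independence of the test point, then split $\E\|\hat\beta-\beta\|^2$ into the noise term $\E\|A\boldsymbol\varepsilon\|^2\le \sigma^2 d\,C_\Sigma n^{-1}$ and the first-stage-error term $\E\|A\boldsymbol\delta\|^2\le C_\Sigma\,\E[\delta_1^2]$, using A6 as an almost-sure eigenvalue bound to decouple $A$ from $\boldsymbol\delta$. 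The paper's bias step is written slightly differently---it first inserts the projection $X(X^\top X)^{-1}X^\top$ and then pulls out $\mie^{-1}(\hat\Sigma_n)/n$---but this is algebraically equivalent to your operator-norm bound $\|A\|_{\mathrm{op}}^2\le C_\Sigma/n$.

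There is one subtle gap in your treatment of $\E[\delta_i^2]$. You assert that each treated $X_i^1\sim\Lambda$, but after conditioning on the event $\{\sum_i W_i=n\}$ the treated covariates are distributed as $X\mid W=1$, which coincides with $\Lambda$ only under constant propensity. Assumption~A4 bounds $\E[(\mu_0(X)-\hat\mu_0^m(X))^2]$ for $X\sim\Lambda$, so it does not directly control $\E[\delta_1^2\mid W_1=1]$. The paper closes this by invoking its change-of-measure inequalities (the propensity-ratio bounds $\emin/\emax \le \ldots \le \emax/\emin$) to pass from the treated to the control (and then marginal) covariate law, which introduces an additional factor depending on $\emin,\emax$ in the final constant. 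You should either add the assumption of constant propensity or insert this change-of-measure step; otherwise the bound $\E[\delta_i^2]\le C_0 m^{-a}$ is not justified as written.
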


It is symmetric that similar results hold if $n$ (the size of the treatment group) is much larger than $m$ (the size of the control group). Furthermore, we note that an equivalent statement also holds for the pointwise MSE (Theorem \ref{theorem:unbalanced_ptwise}).


	\section*{Applications}
\label{sec:applications}

In this section, we consider two data examples. In the first example,
we consider a large Get-Out-The-Vote (GOTV) experiment that explored
if social pressure can be used to increase voter turnout in elections
in the United States \citep{GerberGreenLarimer}. In the second
example, we consider an experiment that explored if door-to-door
canvassing can be used to durably reduce transphobia in Miami
\citep{transphobia}. In both examples, the original authors failed to
find evidence of heterogeneous treatment effects when using simple linear
models without basis expansion, and subsequent researchers and policy
makers have been acutely interested in treatment effect heterogeneity that
could be used to better target the interventions. 
We use our honest random forest implementation \cite{KuenzelHTE} because of the importance of obtaining useful confidence intervals in these applications. Confidence intervals are obtained using a bootstrap procedure (Algorithm \ref{alg:computeCI}). 
 We have evaluated several bootstrap procedures, and we have found that the results for all of them were very similar. We explain this particular bootstrap choice in detail in SI.3.
\subsection*{Social pressure and voter turnout}

In a large field experiment, Gerber et al. show that
substantially higher turnout was observed among registered voters who
received a mailing promising to publicize their turnout to their
neighbors \cite{GerberGreenLarimer}. In the United States, whether someone is registered to
vote and their past voting turnout are a matter of public
record. Of course, \textit{how} individuals voted is private. The experiment has been highly influential both
in the scholarly literature and in political practice. In our
reanalysis, we focus on two treatment conditions: the control group,
which was assigned to 191,243 individuals, and the ``neighbor's'' treatment group,
which was assigned to 38,218 individuals. Note the unequal sample
sizes. The experiment was conducted in Michigan before the August
2006 primary election, which was a statewide election with a wide
range of offices and proposals on the ballot. The authors randomly
assigned households with registered voters to receive mailers. The
outcome, whether someone voted, was observed in the primary
election. The ``neighbors'' mailing opens with a message that states
``DO YOUR CIVIC DUTY---VOTE!'' It then continues by not only listing
the household's voting records but also the voting records of those
living nearby. The mailer informed individuals that ``we intend to
mail an updated chart'' after the primary. 

The study consists of seven key individual-level covariates, most of
which are discrete: gender, age, and whether the registered individual
voted in the primary elections in 2000, 2002, and 2004 or the general
election in 2000 and 2002. The sample was restricted to voters who had
voted in the 2004 general election.  The outcome of interest is turnout in the 2006 primary election, which is an indicator variable. Because compliance is not observed, all estimates are of the Intention-to-Treat (ITT) effect, which is identified by the randomization. The average treatment effect estimated by the authors is 0.081 with a standard error of (0.003). Increasing voter turnout by 8.1\% using a simple mailer is a substantive effect, especially considering that many individuals may never have seen the mailer.  

\begin{figure}
	\centering
		\includegraphics[width=1\linewidth]{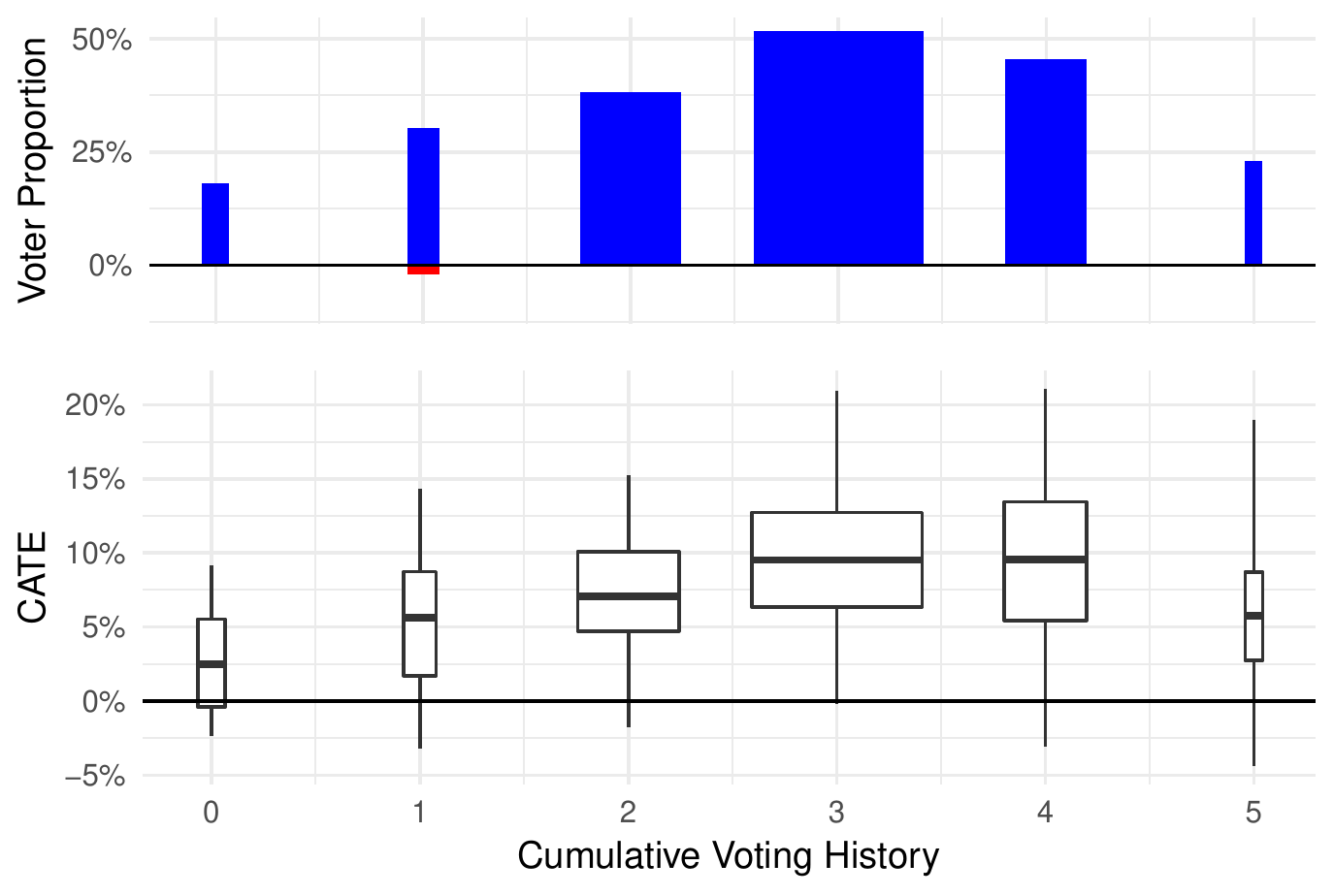}
	\caption{
		Social pressure and voter turnout. Potential voters are grouped by the number of elections they participated in, ranging from 0 (potential voters who did not vote during the past five elections) to 5 (voters who participated in all five past elections). The width of each group is proportional to the size of the group.
		Positive values in the first plot correspond to the percentage of voters for which the predicted CATE is significantly positive, while negative values correspond to the percentage of voters for which the predicted CATE is significantly negative. 
		The second plot shows the CATE estimate distribution for each bin.
	}
	\label{fig:gg2xlfeaturevstecmb}
	\label{fig:gotv}
\end{figure}

Figure \ref{fig:gotv} presents the estimated treatment effects, using X--RF where the potential voters are grouped by their voting history. 
The upper panel of the figure shows the proportion of voters with a significant positive (blue) and a significant negative (red) CATE estimate. We can see that there is evidence of a negative backlash among a small number of people who voted only once in the past five elections prior to the general election in 2004. Applied researchers have observed a backlash from these mailers; e.g., some recipients called their Secretary of States office or local election registrar to complain \citep{mann2010there,michelson2016risk}. 
The lower panel shows the distribution of CATE estimates for each of the subgroups. Having estimates of the heterogeneity enables campaigns to better target the mailers in the future. For example, if the number of mailers is limited, one should target potential voters who voted three times during the past five elections, since this group has the highest average treatment effect and it is a very big group of potential voters.\footnote{In praxis, it is not necessary to identify a particular subgroup. Instead, one can simply target units for which the predicted CATE is large. { If the goal of our analysis were to find subgroups with different treatment effects, one should validate those subgroup estimates. We suggest either splitting the data and letting the X-learner use part of the data to find subgroups and the other part to validate the subgroup estimates, or to use the suggested subgroups to conduct further experiments.}}

S--RF, T--RF, and X--RF all provide similar CATE estimates. This is unsurprising given the very large sample size, the small number of covariates, and their distributions. For example, the correlation between the CATE estimates of S--RF and T--RF is 0.99 (results for S--RF and T--RF can be found in Figure \ref{fig:gg2rslfeaturevstecmb}).

We conducted a data-inspired simulation study to see how these estimators would behave in smaller samples. We take the CATE estimates produced by T--RF, and we assume that they are the truth. We can then impute the potential outcomes under both treatment and control for every observation. We then sample training data from the complete data and predict the CATE estimates for
the test data using, S--, T--, and X--RF. We keep the unequal treatment proportion observed in the full data fixed, i.e., $\P(W = 1) = 0.167$. Figure \ref{fig:gotv.sim} presents the results of this simulation. They show that in small samples both X--RF and S--RF outperform T--RF, with X--RF performing the best, as one may conjecture given the unequal sample sizes.

\begin{figure}
	\centering
	\includegraphics[width=1\linewidth]{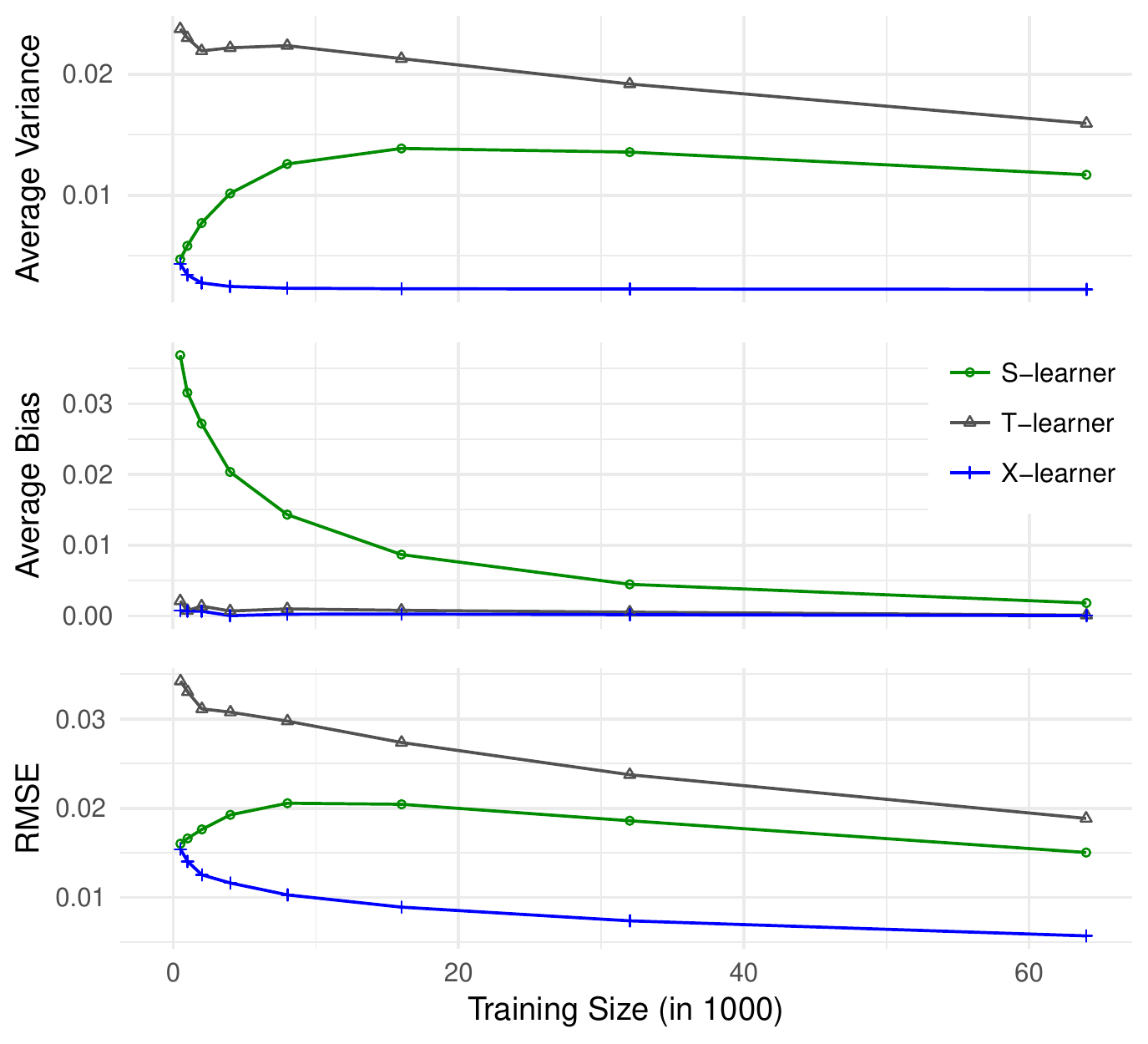}
	\caption{
		 RMSE, bias, and variance for a simulation based on the social pressure and voter turnout experiment.
	}
	\label{fig:gotv.sim}
\end{figure}

\subsection*{Reducing transphobia: A field experiment on door-to-door canvassing}

In an experiment that received widespread media attention, Broockman et al. show that brief (10 minutes) but high-quality door-to-door conversations can markedly reduce prejudice against gender-nonconforming individuals for at least three months \cite{transphobia}. This experiment was published in \textit{Science} after the journal retracted an earlier article claiming to show the same in an experiment about gay rights \citep{ScienceReal}. Broockman et al. showed that the earlier published study was fraudulent, and they conducted the new one to determine if the pioneering behavioral intervention of encouraging people to actively take the perspective of others was effective in decreasing prejudice \cite{broockman2015irregularities}.

There are important methodological differences between this example and our previous one. The experiment is a placebo-controlled experiment with a parallel survey that measures attitudes, which are the outcomes of interest. The authors follow the design of \citep{BKS2017design}. The authors first recruited registered voters ($n=68,378$) via mail for an unrelated online survey to measure baseline outcomes. They then randomly assigned respondents of the baseline survey to either the treatment group ($n=913$) or the placebo group that was targeted with a conversation about recycling ($n=912$). Randomization was conducted at the household level ($n=1295$), and because the design employs a placebo control, the estimand of interest is the complier-average-treatment effect. Outcomes were measured by the online survey three days, three weeks, six weeks, and three months after the door-to-door conversations. We analyze results for the first follow-up.

The final experimental sample consists of only 501 observations. The experiment was well powered despite its small sample size because it includes a baseline survey of respondents as well as post-treatment surveys. The survey questions were designed to have high over-time stability. The R$^2$ of regressing the outcomes of the placebo-control group on baseline covariates using OLS is 0.77. Therefore, covariate adjustment greatly reduces sampling variation. There are 26 baseline covariates that include basic demographics (gender, age, ethnicity) and baseline measures of political and social attitudes and opinions about prejudice in general and Miami's nondiscrimination law in particular. 

The authors find an average treatment effect of 0.22 (SE: 0.072, t-stat: 3.1) on their transgender tolerance scale.\footnote{The authors' transgender tolerance scale is the first principal component of combining five $-3$ to $+3$ Likert scales. See \cite{transphobia} for details.}
The scale is coded so that a larger number implies greater tolerance. The variance of the scale is 1.14, with a minimum observed value of -2.3 and a maximum of 2. This is a large effect given the scale. For example, the estimated decrease in transgender prejudice is greater than Americans' average decrease in homophobia from 1998 to 2012, when both are measured as changes in standard deviations of their respective scales. 

The authors report finding no evidence of heterogeneity in the treatment effect that can be explained by the observed covariates. Their analysis is based on linear models (OLS, lasso, and elastic net) without basis expansions.\footnote{\cite{transphobia} estimates the CATE using Algorithm \ref{algo:Flearner}.}
Figure \ref{fig:trans}(a) presents our results for estimating the CATE, using X--RF. We find that there is strong evidence that the positive effect that the authors find is only found among a subset of respondents that can be targeted based on observed covariates. The average of our CATE estimates is within half a standard deviation of the ATE that the authors report.
%
%
%
%
%

\begin{figure}
	\centering
	\includegraphics[width=1\linewidth]{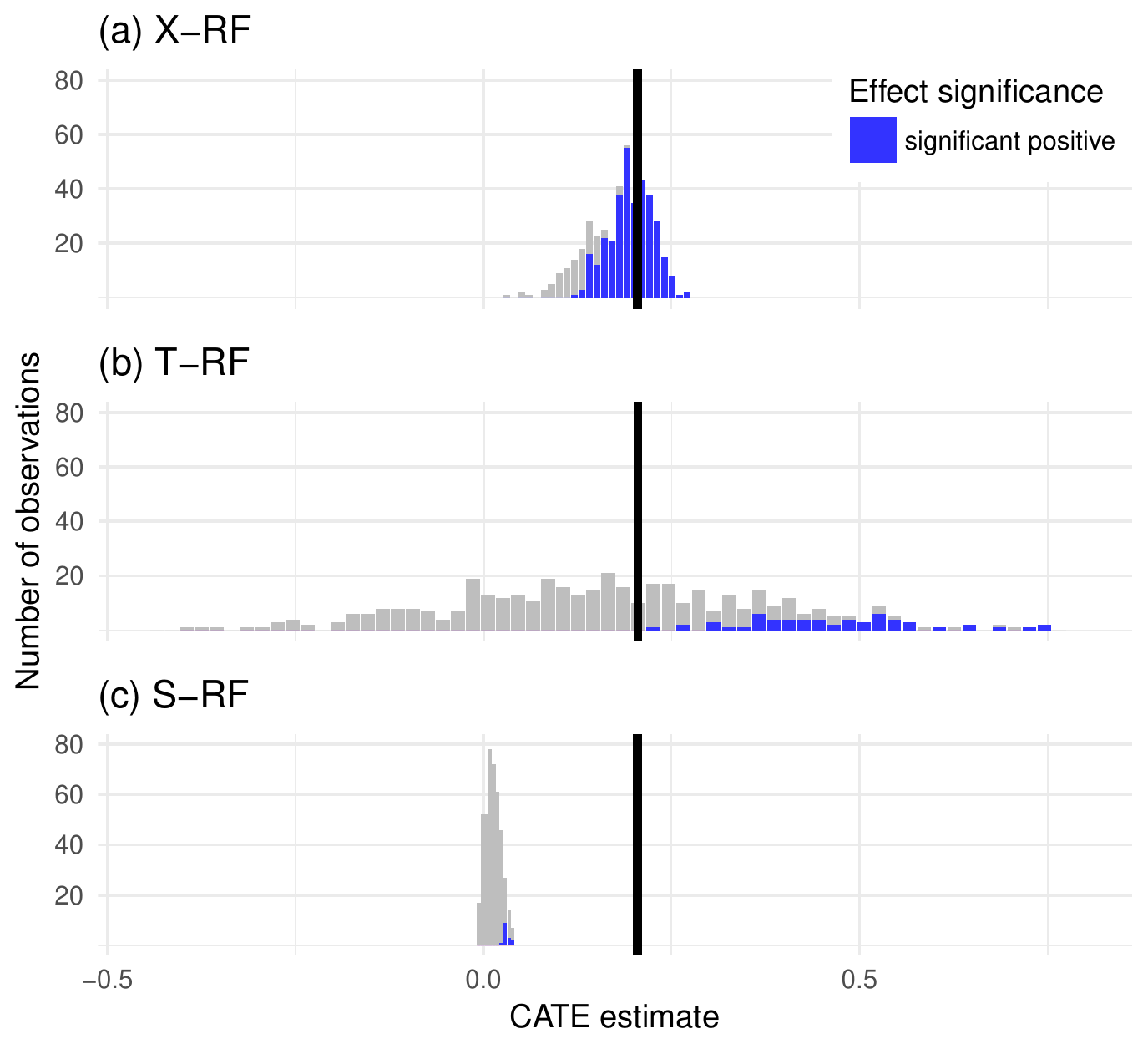}
	\caption{Histograms for the distribution of the CATE estimates in the Reducing Transphobia study. The horizontal line shows the position of the estimated ATE.}
	\label{fig:trans}
\end{figure}

Unlike in our previous data example, there are marked differences in the treatment effects estimated by our three learners. Figure \ref{fig:trans}(b) presents the estimates from T--RF. These estimates are similar to those of X--RF, but with a larger spread. Figure \ref{fig:trans}(c) presents the estimates from S--RF. Note that the average CATE estimate of S--RF is much lower than the ATE reported by the original authors and the average CATE estimates of the other two learners. Almost none of the CATE estimates are significantly different from zero. Recall that the ATE in the experiment was estimated with precision, and was large both substantively and statistically (t-stat=3.1). 

In this data, S--RF shrinks the treatment estimates toward zero. The ordering of the estimates we see in this data application is what we have often observed in simulations: the S-learner has the least spread around zero, the T-learner has the largest spread, and the X-learner is somewhere in between. 
Unlike in the previous example, the covariates are strongly predictive of the outcomes, and the splits in the S--RF are mostly on the features rather than the treatment indicator, because they are more predictive of the observed outcomes than the treatment assignment (cf., Figure \ref{fig:snosplit}).


	\section*{Conclusion}

This paper reviewed meta-algorithms for CATE estimation including the S- and T-learners. It then introduced a new meta-algorithm, the X-learner, that can translate any supervised learning or regression algorithm or a combination of such algorithms into a CATE estimator. The X-learner is adaptive to various settings. For example, both theory and data examples show that it performs particularly well when one of the treatment groups is much larger than the other or when the separate parts of the X-learner are able to exploit the structural properties of the response and treatment effect functions.
Specifically, if the CATE function is linear, but the response functions in the treatment and control group satisfy only the Lipschitz-continuity condition, the X-learner can still achieve the parametric rate if one of the groups is much larger than the other (Theorem \ref{theorem:lineartau}). If there are no regularity conditions on the CATE function and the response functions are Lipschitz continuous, then both the X-learner and the T-learner obtain the same minimax optimal rate (Theorem \ref{Theorem:Lipschitz_Convergence}). We conjecture that these results hold for more general model classes than those in our theorems.

We have presented a broad set of simulations to understand the finite sample behaviors of different implementations of these learners, especially for model classes that are not covered by our theoretical results. We have also examined two data applications. Although none of the meta-algorithms is always the best, the X-learner performs well overall, especially in the real-data examples. In practice, in finite samples, there will always be gains to be had if one accurately judges the underlying data-generating process. For example, if the treatment effect is simple, or even zero, then pooling the data across treatment and control conditions will be beneficial when estimating the response model (i.e., the S-learner will perform well). However, if the treatment effect is strongly heterogeneous and the response surfaces of the outcomes under treatment and control are very different, pooling the data will lead to worse finite sample performance (i.e., the T-learner will perform well). Other situations are possible and lead to different preferred estimators. For example, one could slightly change the S-learner so that it shrinks to the estimated ATE instead of zero, and it would then be preferred when the treatment effect is constant and non-zero.  One hopes that the X-learner can adapt to these different settings. The simulations and real-data studies presented have demonstrated the X-learner's adaptivity.  However, further studies and experience with more real data sets are necessary.
{ To enable practitioners to benchmark these learners on their own data sets, we have created an easy-to-use software library called \texttt{hte}. It implements several methods of selecting the best CATE estimator for a particular data set, and it implements confidence interval estimators for the CATE.}

In ongoing research, we are investigating using other supervised learning algorithms. For example, we are creating a deep learning architecture for estimating the CATE that is based on the X-learner with a particular focus on transferring information between different data sets and treatment groups.
Furthermore, we are concerned with finding better confidence intervals for the CATE. This might enable practitioners to better design experiments, and determine the required sample size before an experiment is conducted.



	\section*{Acknowledgement}
	We thank Rebecca Barter, David Broockman, Peng Ding, Avi Feller, Steve Howard, Josh Kalla, Fredrik S\"avje, Yotam Shem-Tov, Allen Tang, Simon Walter and seminar participants at Adobe, Columbia, MIT and Stanford for helpful discussions. We also thank Allen Tang for help with software development. We are responsible for all errors. The authors thank Office of Naval Research (ONR) Grants N00014-17-1-2176 (joint), N00014-15-1-2367 (Sekhon), N00014-16-1-2664 (Yu), ARO grant W911NF-17-10005, and the Center for Science of Information (CSoI), an NSF Science and Technology Center, under grant agreement CCF-0939370 (Yu).

	\appendix 
	\onecolumn
\section{Simulation  Studies}\label{section:simulation}
In this section, we compare the S-, T-, and X-learners in several simulation studies. We examine prototypical situations where one learner is preferred to the others. In practice, we recommend choosing powerful machine-learning algorithms such as BART \cite{hill2011bayesian}, Neural Networks, or RFs \cite{breiman2001random} for the base learners, since such methods perform well for a large variety of data sets. In what follows, we choose all the base learners to be either BART or honest RF algorithms---as implemented in the \texttt{hte} R package \cite{KuenzelHTE}---and we refer to these meta-learners as S--RF, T--RF, X--RF, S--BART, T--BART, and X--BART, respectively. Using two machine-learning algorithms as base learners helps us to demonstrate that our conclusions about the performance of the different meta learners is often independent of the particular base learner.  
For example, for all our simulation results we observe that if X--RF outperforms T--RF, then X--BART also outperforms T--BART.

\begin{remark}[BART and RF]
	BART and RF are regression tree-based algorithms that use all observations for each prediction, and they are in that sense global methods. However, BART seems to use global information more seriously than RF, and it performs particularly well when the data-generating process exhibits some global structures (e.g., global sparsity or linearity). RF, on the other hand, is relatively better when the data has some local structure that does not necessarily generalize to the entire space.  
\end{remark}

\subsection*{Causal Forests}
An estimator closely related to T--RF and S--RF is Causal Forests (CF) \cite{wager2015estimation}, because all three of these estimators can be defined as
$$
\hat\tau(x) = \hat \mu(x, w = 1) - \hat \mu(x, w = 0),
$$
where $\hat \mu(x, w)$ is a form of random forest with different constraints on the split on the treatment assignment, $W$. To be precise, in the S-learner the standard squared error loss function will decide where to split on $W$, and it can therefore happen anywhere in the tree. In the T-learner the split on $W$ must occur at the very beginning.\footnote{In the original statement of the algorithm we train separate RF estimators for each of the treatment groups, but they are equivalent.} For CF the split on $W$ is always made to be the split right before the terminal leaves. To obtain such splits, the splitting criterion has to be changed, and we refer to \cite{wager2015estimation} for a precise explanation of the algorithm. 

Figure \ref{fig:STCF_basetree} shows the differences between these learners for full trees with 16 leaves.

CF is not a meta-learner since the random forests algoirthm has to be changed. However, its similarity to T--RF and S--RF makes it interesting to evaluate its performance. Furthermore, one could conceivably generalize CF to other tree-based learners such as BART. However, this has not been done yet, and we will therefore compare CF in the following simulations to S--, T--, and X--RF.

\begin{figure}[!htb]
	\minipage{0.32\textwidth}
	\includegraphics[width=\linewidth]{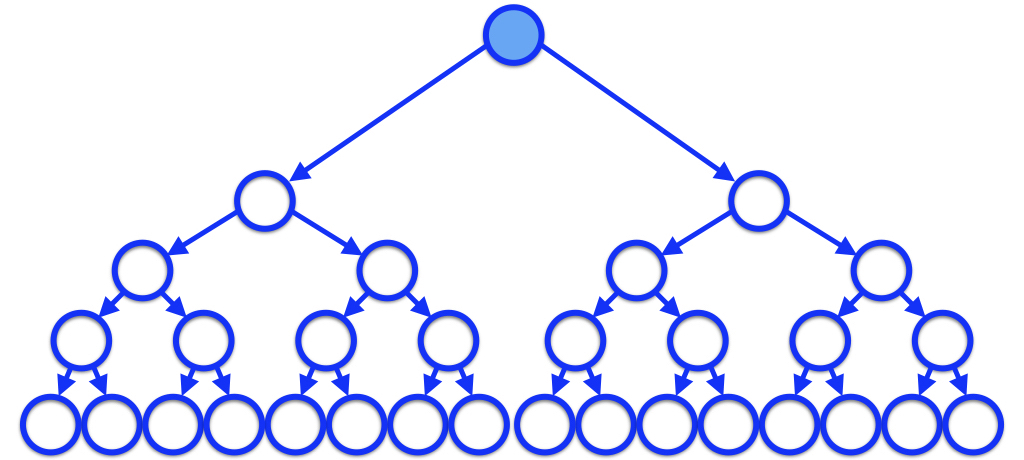}
	\caption*{T-learner}
	\endminipage\hfill
	\minipage{0.32\textwidth}
	\includegraphics[width=\linewidth]{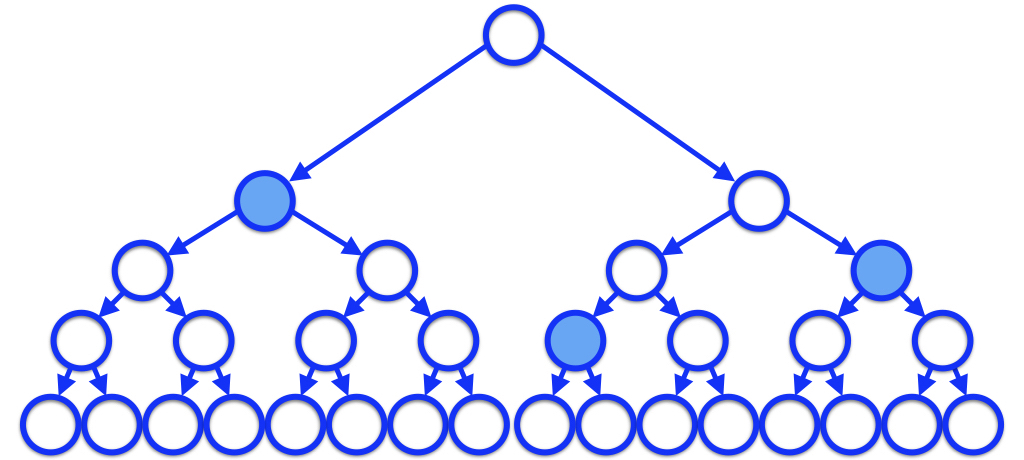}
	\caption*{S-learner}
	\endminipage\hfill
	\minipage{0.32\textwidth}
	\includegraphics[width=\linewidth]{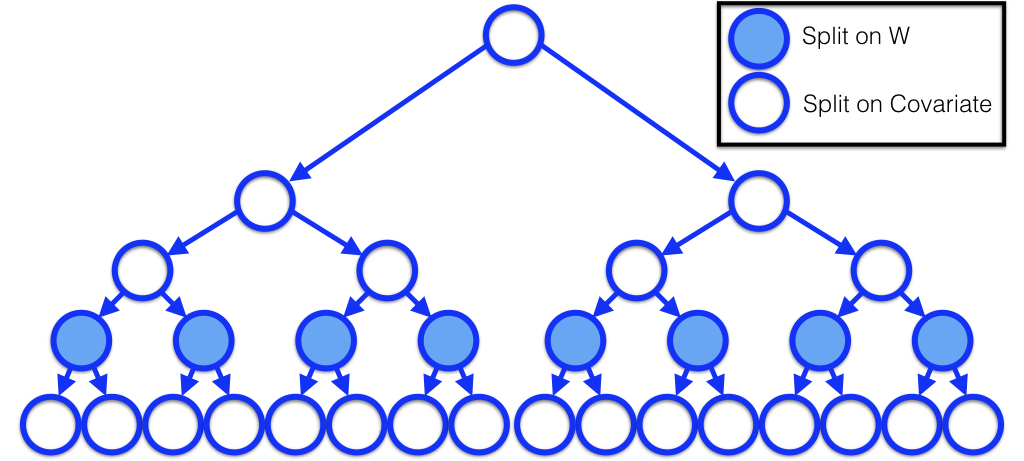}
	\caption*{Causal Forests}
	\endminipage
	\caption{Illustration of the structural form of the trees in T--RF, S--RF, and CF.} \label{fig:STCF_basetree}
\end{figure}

\subsection*{Simulation setup}
Let us here introduce the general framework of the following simulations.
For each simulation, we specify: the propensity score, $e$; the response functions, $\mu_0$ and $\mu_1$; the dimension, $d \in \N$, of the feature space; and a parameter, $\alpha$, which specifies the amount of confounding between features. 
To simulate an observation, $i$, in the training set, we simulate its feature vector, $X_i$, its treatment assignment, $W_i$, and its observed outcome, $\Yobs_i$, independently in the following way:
\begin{enumerate}
	\item
	First, we simulate a $d$-dimensional feature vector,
	\begin{equation} \label{equ:covariateCorrelation}
	X_i \overset{iid}{\sim} \mathcal N(0, \Sigma),
	\end{equation}
	where $\Sigma$ is a correlation matrix that is created using the \texttt{vine} method \cite{Lewandowski2009}.
	\item
	Next, we create the potential outcomes according to 
	\begin{align*}
	Y_i(1) = \mu_1(X_i) + \varepsilon_i(1),\\                
	Y_i(0) = \mu_0(X_i) + \varepsilon_i(0),
	\end{align*} 
	where $\varepsilon_i(1), \varepsilon_i(0) \overset{iid}{\sim} \mathcal{N}(0,1)$ and independent of $X_i$.
	\item
	Finally, we simulate the treatment assignment according to
	$$
	W_i \sim \mbox{Bern}(e(X_i)),
	$$
	we set $\Yobs_i = Y(W_i),$
	and we obtain $(X_i, W_i, \Yobs_i)$.\footnote{This is slightly different from the DGP we were considering for our theoretical results, because here $m$, the number of control units, and $n$, the number of treated units, are both random. The difference is, however, very small, since in our setups $N =m+n$ is very large.}
\end{enumerate}    

We train each CATE estimator on a training set of $N$ units, and we evaluate its performance against a test set of  $10^5$ units for which we know the true CATE. We repeat each experiment 30 times, and we report the averages. 

\subsection{The unbalanced case with a simple CATE}
We have already seen in Theorem \ref{theorem:lineartau} that the X-learner performs particularly well when the treatment group sizes are very unbalanced. We verify this effect as follows. 
We choose the propensity score to be constant and very small, $e(x) = 0.01$, such that on average only one percent of the units receive treatment. Furthermore, we choose the response functions in such a way that the CATE function is comparatively simple to estimate. 

\begin{simulation}[unbalanced treatment assignment] \label{sim:unbalanced}
	\begin{align*}
	e(x) &= 0.01,~~d = 20,\\
	\mu_0(x) &= x^T \beta + 5 ~ \mathbb{I}(x_1 > 0.5), ~~ \mbox{with} ~~ \beta \sim \mbox{Unif}\left([-5,5]^{20}\right),\\
	\mu_1(x) &= \mu_0(x) + 8 ~ \mathbb{I}(x_2 > 0.1).
	\end{align*}
\end{simulation}
The CATE function $\tau(x)= 8~\mathbb{I}(x_2 > 0.1)$ is a one-dimensional indicator function, and thus simpler than the 20-dim function for the response functions $\mu_0 (\cdot)$ and $\mu_1 (\cdot)$.
We can see in Figure \ref{fig:mseratesrare1d20a} that the X-learner indeed performs much better in this unbalanced setting with both BART and RF as base learners. 

\begin{figure}[ht]
	\centering
	\includegraphics[width=.6\linewidth]{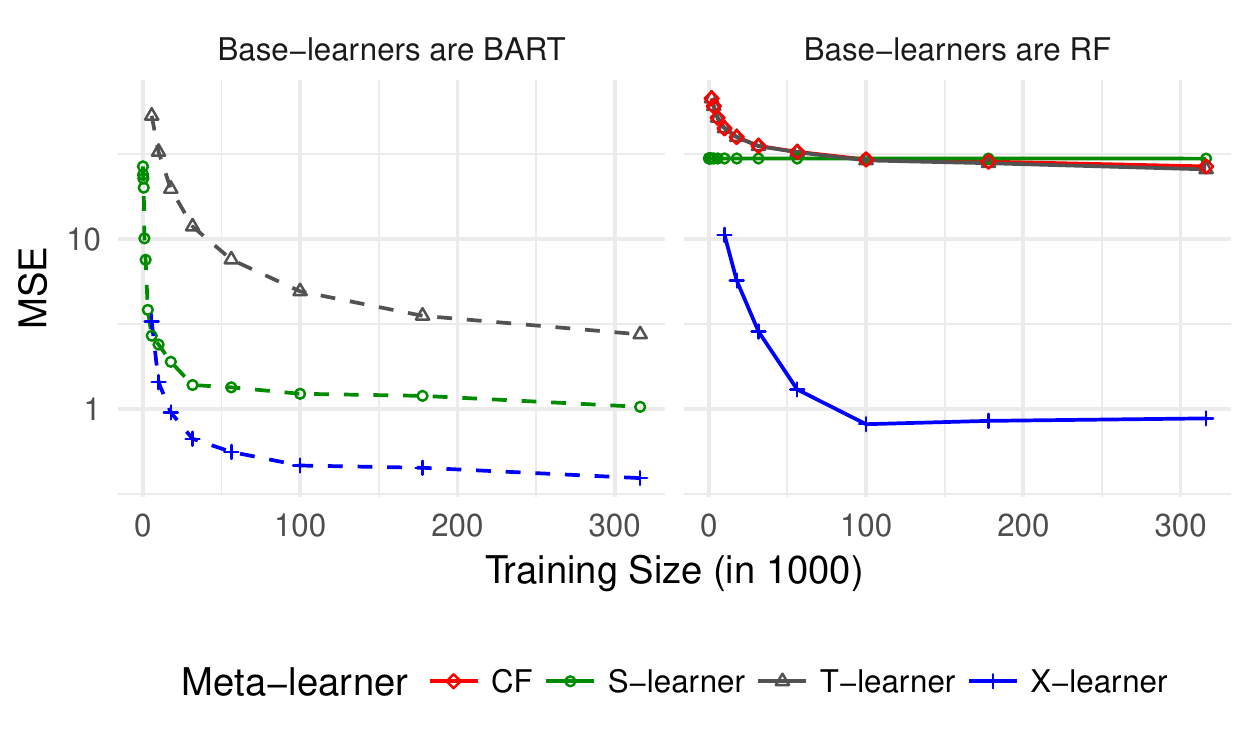}
	\caption{
		Comparison of S--, T--, and X--BART (left) and S--, T--, and X--RF and CF (right) for Simulation \ref{sim:unbalanced}. 
	}
	\label{fig:mseratesrare1d20a}
\end{figure}

\subsection{Balanced cases without confounding}

Next, let us analyze two extreme cases: In one of them the CATE function is very complex and in the other one the CATE function is equal to zero. We will show that for the case of no treatment effect, the S-learner performs very well since it sometimes does not split on the treatment indicator at all and it tends to be biased toward zero. 
On the other hand, for the complex CATE case simulation we have chosen, there is nothing to be learned from the treatment group about the control group and vice versa. Here the T-learner performs very well, while the S-learner is often biased toward zero.  
Unlike the T-learner, the X-learner pools the data, and it therefore performs well in the simple CATE case. And unlike the S-learner, the X-learner is not biased toward zero. It therefore performs well in both cases.

\subsubsection{Complex CATE}
Let us first consider the case where the treatment effect is as complex as the response functions in the sense that it does not satisfy regularity conditions (such as sparsity or linearity) that the response functions do not satisfy.
We study two simulations here, and we choose for both the feature dimension to be $d = 20$, and the propensity score to be $e(x) = 0.5$. In the first setup (complex linear) the response functions are different linear functions of the entire feature space.
\begin{simulation}[complex linear] \label{sim:complexlin}
	
	\begin{align*}
	e(x) &= 0.5,~~d = 20,\\
	\mu_1(x) &= x^T \beta_1, ~\mbox{with} ~ \beta_1 \sim \mbox{Unif}([1, 30]^{20}),\\
	\mu_0(x)  &= x^T \beta_0, ~\mbox{with} ~ \beta_0 \sim \mbox{Unif}([1, 30]^{20}).    
	\end{align*}
	
\end{simulation}
The second setup (complex non-linear) is motivated by \cite{wager2015estimation}. Here the response function are non-linear functions.

\begin{simulation}[complex non-linear]\label{sim:complexnonlin}
	
	\begin{align*}
	e(x) &= 0.5,~~d = 20,\\
	\mu_1(x) &= \frac12\varsigma(x_1)\varsigma(x_2),\\
	\mu_0(x)  &= -\frac12 \varsigma(x_1)\varsigma(x_2)\notag    
	\end{align*}
	with 
	$$
	\varsigma(x) = \frac{2}{1 + e^{-12 (x - 1/2)}}.
	$$
	
\end{simulation}

Figure \ref{fig:mseratescomplexcate} shows the MSE performance of the different learners. 
In this case, it is best to separate the CATE estimation problem into the two problems of estimating $\mu_0$ and $\mu_1$ since there is nothing one can learn from the other assignment group. The T-learner follows exactly this strategy and should perform very well.
The S-learner, on the other hand, pools the data and needs to learn that the response function for the treatment and the response function for the control group are very different. 
However, in the simulations we study here, the difference seems to matter only very little. 

Another interesting insight is that choosing BART or RF as the base learner can matter a great deal. BART performs very well when the response surfaces satisfy global properties such as being globally linear, as in Simulation \ref{sim:complexlin}. 
However, in Simulation \ref{sim:complexnonlin}, the response surfaces do not satisfy such global properties.
Here the optimal splitting policy differs throughout the space and this non-global behavior is harmful to BART. Thus, choosing RF as the base learners results in a better performance here. 
Researchers should use their subject knowledge when choosing the right base learner. 
\begin{figure}
	\centering
	\includegraphics[width=.6\linewidth]{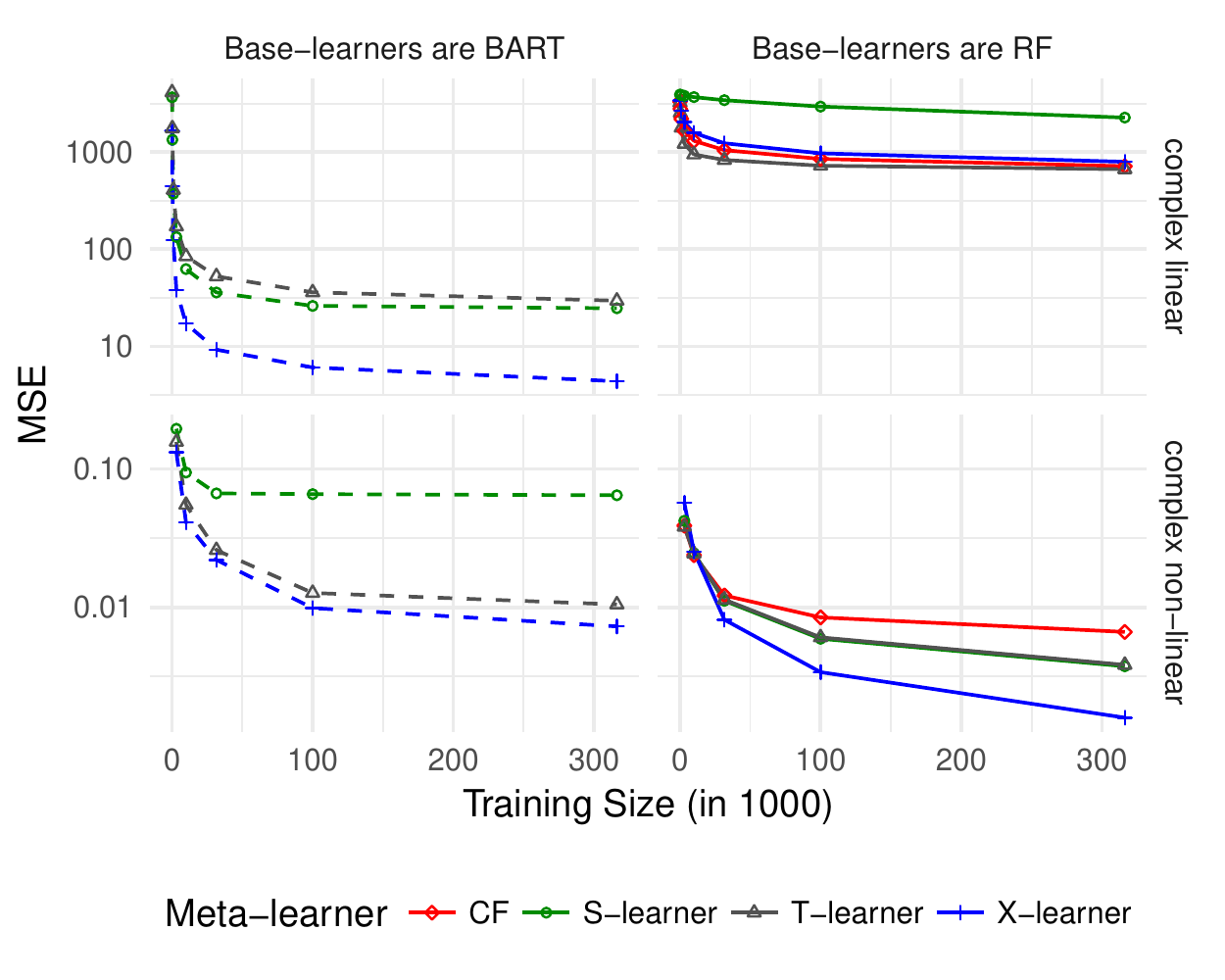}
	\caption{
		Comparison of the S-, T-, and X-learners with BART (left) and RF (right) as base learners for Simulation \ref{sim:complexlin} (top) and Simulation \ref{sim:complexnonlin} (bottom).
	}
	\label{fig:mseratescomplexcate}
\end{figure}

\subsubsection{No treatment effect}
Let us now consider the other extreme where we choose the response functions to be equal. This leads to a zero treatment effect, which is very favorable for the S-learner.
We will again consider two simulations where the feature dimension is 20, and the propensity score is constant and 0.5.

We start with a global linear model (Simulation \ref{sim:globallin}) for both response functions. In Simulation \ref{sim:piecwlin}, we simulate some interaction by slicing the space into three parts, $\{x : x_{20} < -0.4\}$, $\{x : -0.4< x_{20} < 0.4 \}$, and $\{x : 0.4< x_{20} \}$, where for each of the three parts of the space a different linear response function holds. We do this because we believe that in many data sets there is a local structure, that appears only in some parts of the space. 

\begin{simulation}[global linear] \label{sim:globallin}
	\begin{align*}
	e(x) &= 0.5,~~d = 5,\\
	\mu_0(x) &= x^T \beta, ~\mbox{with} ~ \beta \sim \mbox{Unif}([1, 30]^5),\\
	\mu_1(x) &= \mu_0(x).
	\end{align*}
\end{simulation}

\begin{simulation}[piecewise linear] \label{sim:piecwlin}
	\begin{align*}
	e(x) &= 0.5,~~d = 20,\\
	\mu_0(x) &=
	\begin{cases}
	x^T \beta_l    &\mbox{if} ~~ x_{20} < -0.4\\
	x^T \beta_m &\mbox{if} ~~ -0.4 \le x_{20} \le 0.4\\
	x^T \beta_u   &\mbox{if} ~~  0.4 < x_{20}, 
	\end{cases}\\
	\mu_1(x) &= \mu_0(x),
	\end{align*}
	with 
	\begin{align*}
	&\beta_l(i) = \begin{cases}
	\beta(i)  &\mbox{if} ~~ i \le 5\\
	0 &\mbox{otherwise}
	\end{cases}&                    
	&\beta_m(i) = \begin{cases}
	\beta(i)  &\mbox{if} ~~ 6 \le i \le 10\\
	0 &\mbox{otherwise}
	\end{cases}&                    
	&\beta_u(i) = \begin{cases}
	\beta(i)  &\mbox{if} ~~ 11 \le i \le 15\\
	0 &\mbox{otherwise}
	\end{cases}&                    
	\end{align*}
	and 
	$$
	\beta \sim \mbox{Unif}([-15, 15]^d).
	$$
\end{simulation}

Figure \ref{fig:mseratessimplecate} shows the outcome of these simulations. For both simulations, the CATE is globally 0. As expected, the S-learner performs very well, since the treatment assignment has no predictive power for the combined response surface. The S-learner thus often ignores the variable encoding the treatment assignment, and the S-learner correctly predicts a zero treatment effect. 
We can again see that the global property of the BART harms its performance in the piecewise linear case since here the importance of the features is different in different parts of the space. 

\begin{figure}
	\centering
	\includegraphics[width=.6\linewidth]{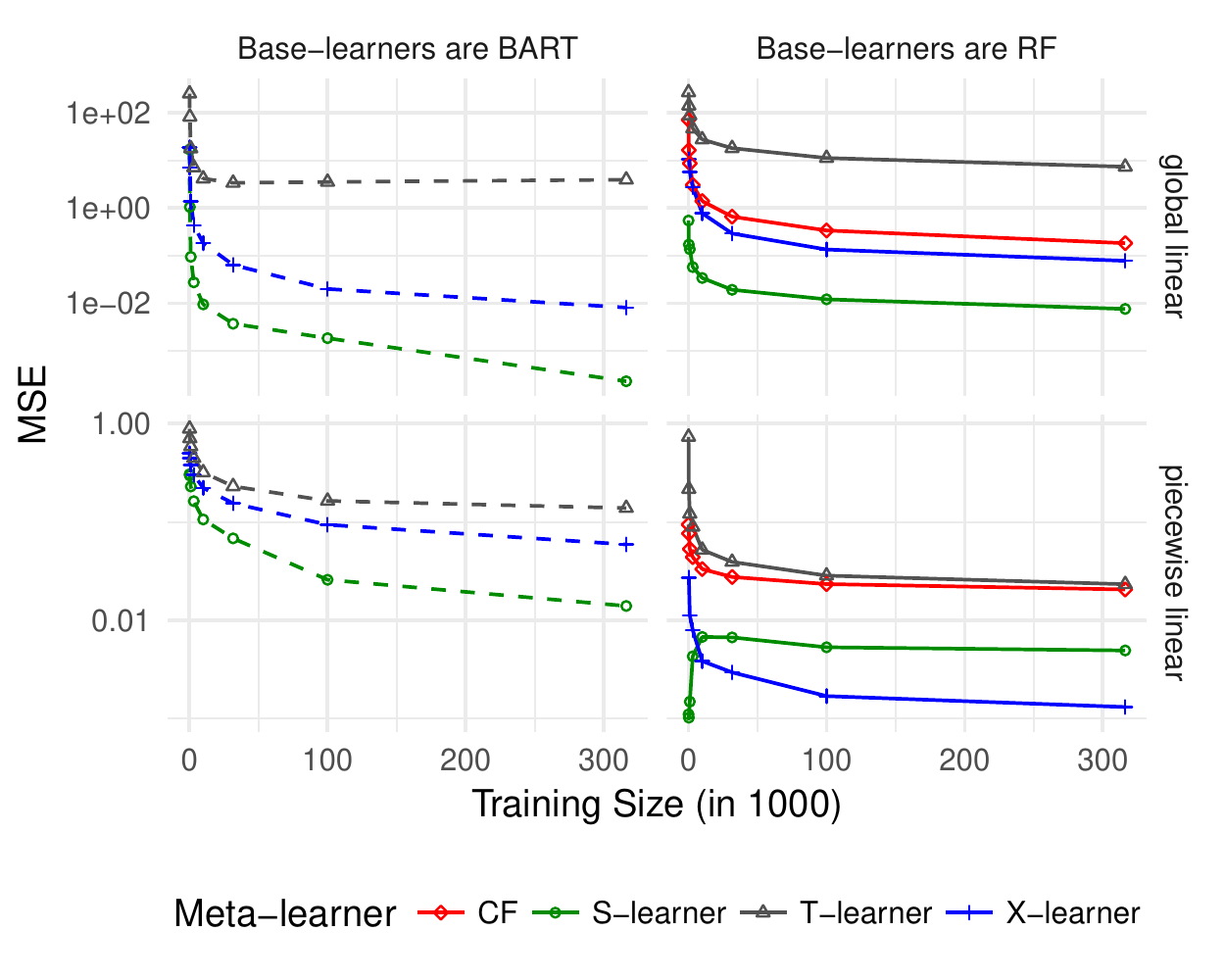}
	\caption{
		Comparison of S-, T-, and X-learners with BART (left) and RF (right) as base learners for Simulation \ref{sim:globallin} (top) and Simulation \ref{sim:piecwlin} (bottom).
	}
	\label{fig:mseratessimplecate}
\end{figure}

\subsection{Confounding}
In the preceding examples, the propensity score was globally equal to some constant. This is a special case, and in many observational studies, we cannot assume this to be true. All of the meta-learners we discuss can handle confounding, as long as the ignorability assumption holds. We test this in a setting that has also been studied in \cite{wager2015estimation}. For this setting we choose $x \sim Unif([0,1]^{n\times 20})$ and we use the notation that $\beta(x_1, 2, 4)$ is the $\beta$ distribution with parameters 2 and 4.
\begin{simulation}[beta confounded] \label{sim:betacon}
	\begin{align*}
	e(x) &= \frac14(1 + \beta(x_1, 2, 4) ),\\
	\mu_0(x) &= 2 x_1  - 1,\\
	\mu_1(x) &= \mu_0(x).
	\end{align*}
\end{simulation}

Figure \ref{fig:simulationobservationalstudies} shows that none of the algorithms performs significantly worse under confounding. We do not show the performance of causal forests, because---as noted by the authors---it is not designed for observational studies with only conditional unconfoundedness and it would not be fair to compare it here  \cite{wager2015estimation}.

\begin{figure}
	\centering
	\includegraphics[width=.6\linewidth]{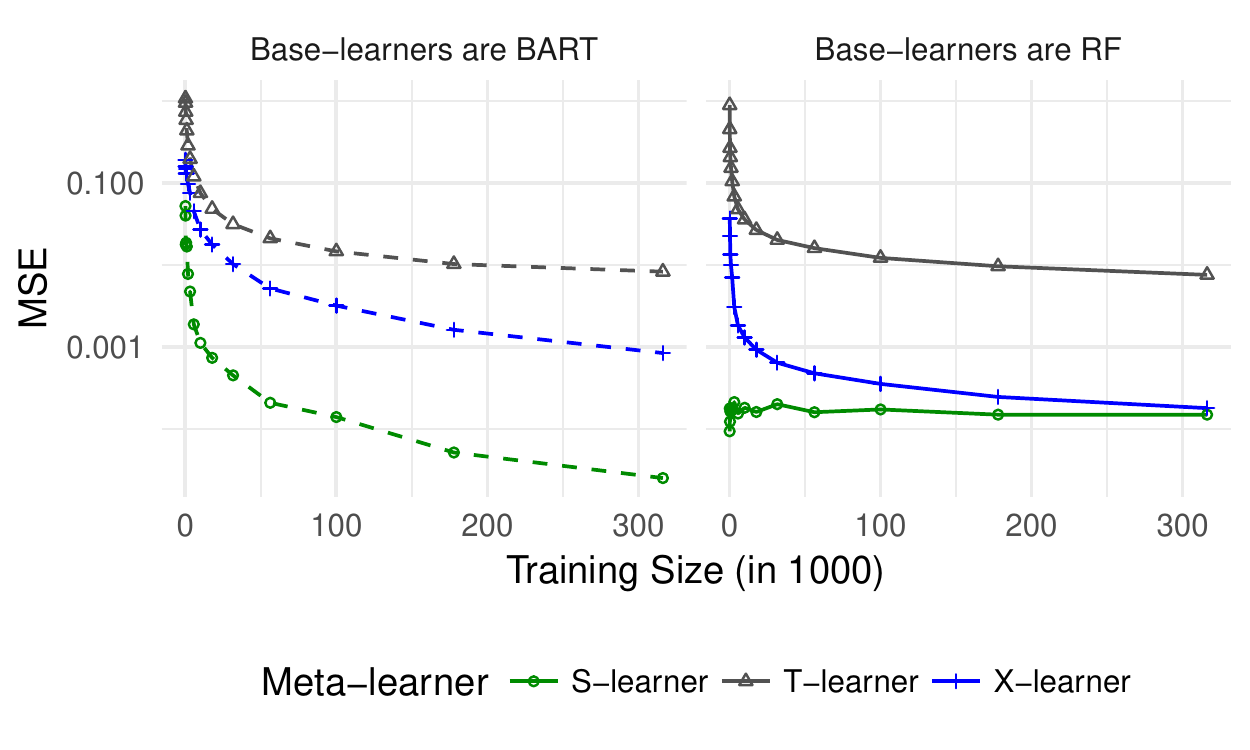}
	\caption{
				Comparison of S--, T--, and X--BART (left) and S--, T--, and X--RF (right) for Simulation \ref{sim:betacon}.
	}
	\label{fig:simulationobservationalstudies}
\end{figure}


\section{Notes on the ITE}
We provide an example that demonstrates that the ITE is not identifiable without further assumptions. Similar arguments and examples have been given before \cite{heckman1997making}, and we list it here only for completeness. 
\begin{example}[$D_i$ is not identifiable] \label{Exa:ITEnotIdfbl}
	Assume that we observe a one-dimensional and uniformly distributed feature between 0 and 1, 
    $ X\sim \mbox{Unif}([0,1]),$
    a treatment assignment that is independent of the feature and Bernoulli distributed, 
    $W \sim \mbox{Bern}(0.5),$
    and a Rademacher-distributed outcome under control that is independent of the features and the treatment assignment,
    $$P(Y(0) = 1) = P(Y(0) = -1) = 0.5.$$
    Now consider two Data-Generating Processes (DGP) identified by the distribution of the outcomes under treatment: 
    \begin{enumerate}
    	\item In the first DGP, the outcome under treatment is equal to the outcome under control:
        	$$ Y(1) = Y(0).$$
         \item In the second DGP, the outcome under treatment is the negative of the outcome under control:
         	$$ Y(1) = - Y(0).$$ 
     \end{enumerate}
     Note that the observed data, $\D = (\Yobs_j, X_j, W_j)_{1\le j \le N}$, has the same distribution for both DGPs, but $D_i = 0$ for all $i$ in  DGP 1, and $D_i \in \{-2, 2\}$ for all i in DGP 2. 
     Thus, no estimator based on the observed data $\D$ can be consistent for the ITEs, $(D_i)_{1\le i \le n}$. The CATE, $\tau(X_i)$, is, however, equal to 0 in both DGPs. $\hat \tau \equiv 0$, for example, is a consistent estimator for the CATE.
\end{example}

\section{Confidence Intervals for the Social Pressure Analysis} \label{sec:CI}
In this paper, we study general meta-learners without making any parametric assumptions on the CATE. This generality makes it very difficult to provide confidence intervals with formal guarantees. In the GOTV section of the main paper, we used bootstrap confidence intervals; in this section, we explain why we choose the bootstrap and details of the variant of the bootstrap, we selected.

The bootstrap has been proven to perform well in many situations \cite{liu2013asymptotic} and it is straightforward to apply to any estimator that can be written as a function of iid data. 
There are, however, many ways to obtain bootstrap confidence intervals. We have decided to use Algorithm \ref{alg:computeCI}, because it performed well for X--RF in the Atlantic Causal Inference Conference (ACIC) challenge \cite{dorie2017automated}, where one of the goals was to create confidence intervals for a wide variety of CATE estimation problems. We refer to these confidence intervals as normal approximated CIs.

It was seen in the ACIC challenge that constructing confidence intervals for the CATE that achieve their nominal coverage is extremely difficult, and no method always provides the correct coverage. 
To argue that the conclusions we draw in this paper are not specific to a single bootstrap method, we implement another version of the bootstrap to estimate confidence intervals due to \cite{putter2012resampling} and \cite{efron2014estimation}. We refer to it as the smoothed bootstrap, and we call the corresponding confidence intervals smoothed CIs. Pseudocode for this method can be found in Algorithm \ref{alg:computeCI2}.

There are many other versions of the bootstrap that could have been chosen, but we focus on two that performed well in the ACIC challenge. 
To compare these methods, we use the GOTV data, and we estimate confidence intervals for $2,000$ test points based on $50,000$ training points. We have to use this much smaller subset of the data for computational reasons.

For both methods, we use $B = 10,000$ bootstrap samples. This is a large number of replications, but it is necessary because the smoothed CIs (Algorithm \ref{alg:computeCI2}) are unstable for a smaller $B$. 
Figure \ref{fig:smoothedvsnormalcomparison} compares the center and the length of the confidence intervals of the two methods for T--RF.
We can see that the two methods lead to almost the same confidence intervals. The normal approximated CIs are slightly larger, but the difference is not substantial.
This is not surprising given the size of the data, and it confirms that our analysis of the GOTV data would have come to the same conclusion had we used smoothed CIs (Algorithm \ref{alg:computeCI2}). However, normal approximated CIs (Algorithm \ref{alg:computeCI}) are computationally much less expensive and they are therefore our default method. 

\begin{figure}
	\centering
	\includegraphics[width=\linewidth]{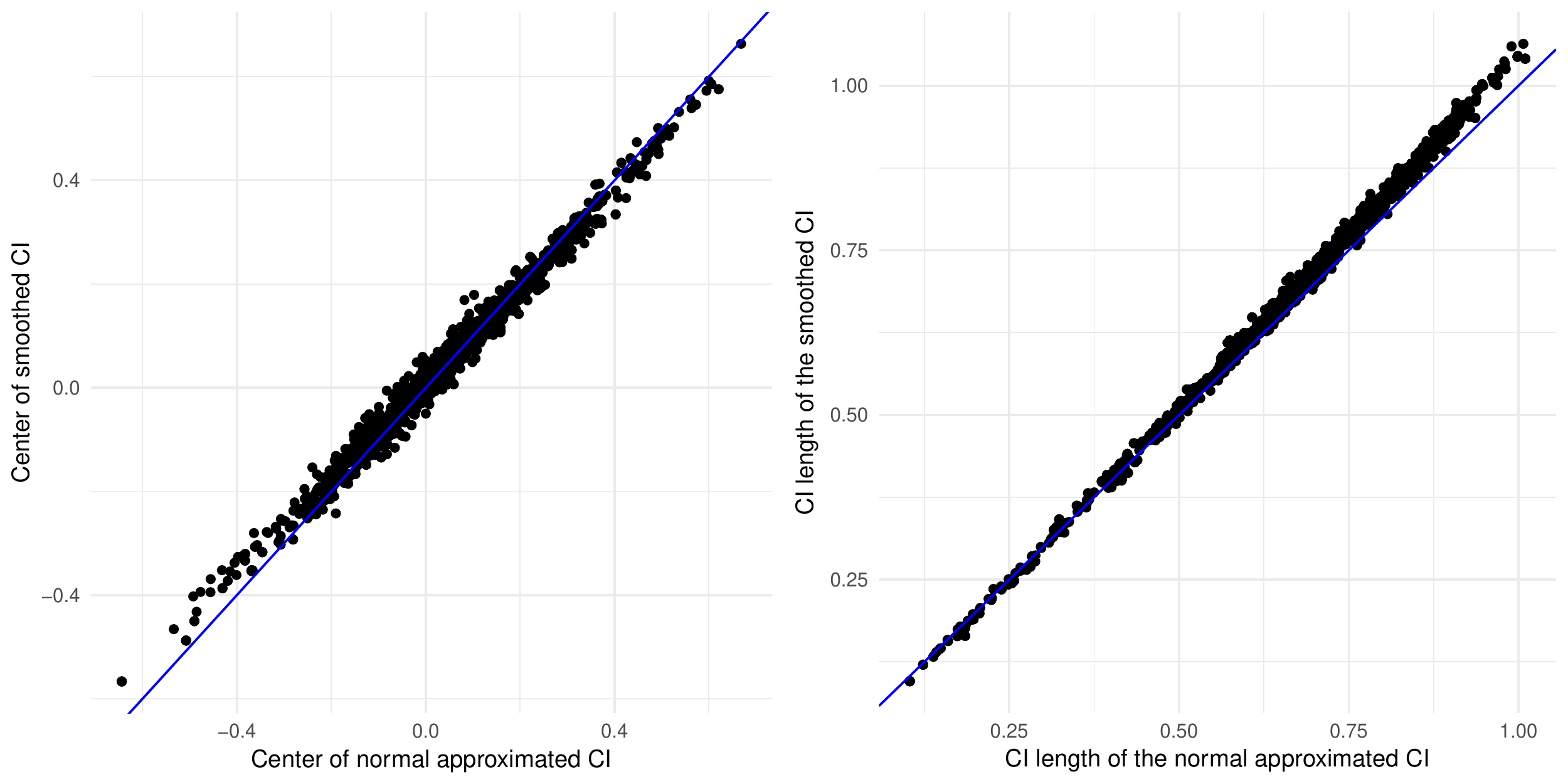}
	\caption{Comparison of normal approximated CI (Algorithm \ref{alg:computeCI}) and smoothed CI (Algorithm \ref{alg:computeCI2}). The blue line is the identity function.}
	\label{fig:smoothedvsnormalcomparison}
\end{figure}

\subsection{CI-Simulation 1: Comparison of the coverage of the CI estimation methods} \label{section:CIcomparison}
To analyze the coverage of the different bootstrap methods, we use a simulation study informed by the GOTV data. 
We generate the data in the following way:
\subsubsection*{CI-Simulation 1}

\begin{enumerate} 
	\item
	We start by training the T-learner with random forests on the entire GOTV data set to receive CATE estimates. We take this estimate as the ground truth and call it $\tau(x)$. 
	\item 
	We then compute for each unit $i$ the missing potential outcome.
	That is, for a unit in the control group, we add $\tau(x_i)$ to the observed outcome to obtain the outcome under treatment, and for each unit in the treatment group, we subtract $\tau(x_i)$ from the observed outcome to obtain the outcome under control.
	\item Next, we create a new treatment assignment by permuting the original one.
	This also determines our new observed outcome. 
	\item Finally, we sample uniformly and without replacement a test set of  $2,000$ observations and a training set of $50,000$ observations. 
\end{enumerate}

We then compute 95\% confidence intervals for each point in the test set using the the normal and smoothed bootstrap combined with the S, T, and X-learner. 
The left part of Figure \ref{fig:coveragevslength} shows a comparison of the six methods. 
 We find that none of the methods provide the correct coverage. 
The coverage of the smooth bootstrap intervals is slightly higher than the coverage of the normal approximated confidence intervals, but the difference is within 1\%. 
It also appears that the T-learner provides the best coverage, but it also has the largest confidence interval length.

Based on this simulation, we believe that the smooth CIs have a slightly higher coverage but the intervals are also slightly longer. However, the smooth CIs are computationally much more expensive and need a lot of bootstrap samples to be stable. They are therefore unfeasible for our data.
Hence we prefer the normal approximated CIs.

\begin{figure}
	\centering
	\includegraphics[width=1\linewidth]{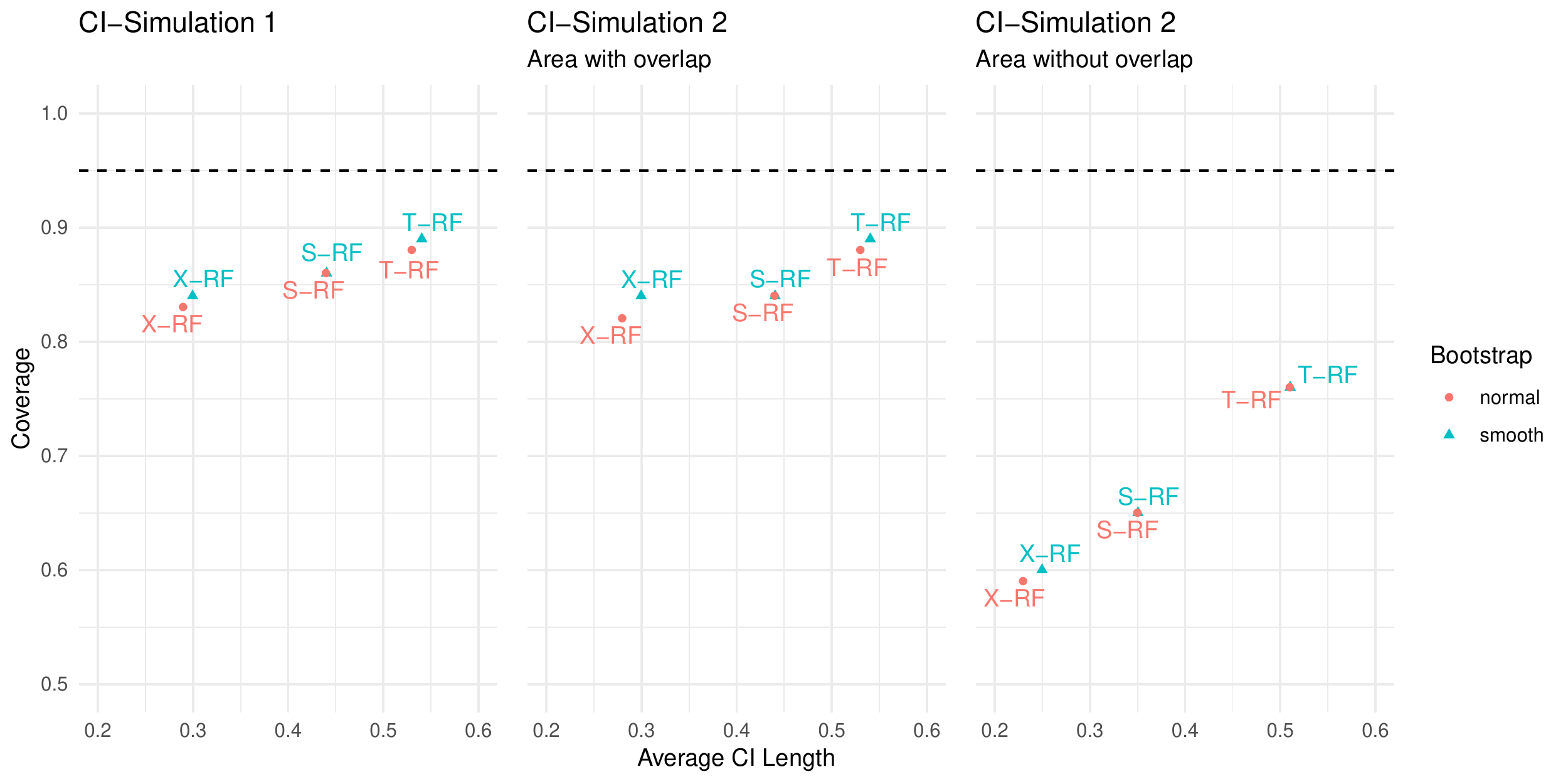}
	\caption{ Coverage and average confidence interval length of the three meta-learners for normal approximated CI (Algorithm \ref{alg:computeCI}) and smoothed CI (Algorithm \ref{alg:computeCI2}). The left figure corresponds to Simulation 3.1; the middle figure corresponds to units in an area with overlap in Simulation SI 3.2, and the right figure corresponds to units in an area without overlap in Simulation SI 3.2. The dotted line corresponds to the target 95\% confidence interval.}
	\label{fig:coveragevslength}
\end{figure}

In general, we observe that none of the methods achieve the anticipated 95\% coverage and we suspect that this is the case, because the CATE estimators are biased and the bootstrap is not adjusting for the bias terms. 
To analyze this, we approximated the bias using a Monte Carlos simulation for each of the 2,000 test points using Algorithm \ref{alg:computeBIAS}.
The density plot in Figure \ref{fig:biasestimation} shows that the bias of X--RF in our sample is substantial and in particular of the same order as the size of the confidence intervals of X--RF. For example, more than 11\% of all units had  bias bigger than 0.15.

This raises the question whether it is possible to correct for the bias. 
We tried to use the bootstrap again to estimate the bias. Specifically, we used Algorithm \ref{alg:estimateBIAS} to estimate it. 
The upper subfigure in Figure \ref{fig:biasestimation} is a scatter plot of the Monte-Carlo-approximated bias versus the bootstrap-estimated bias. 
We can see that the bootstrap does not correctly estimate the bias.

\begin{figure}
			\centering
			\includegraphics[width=0.4\linewidth]{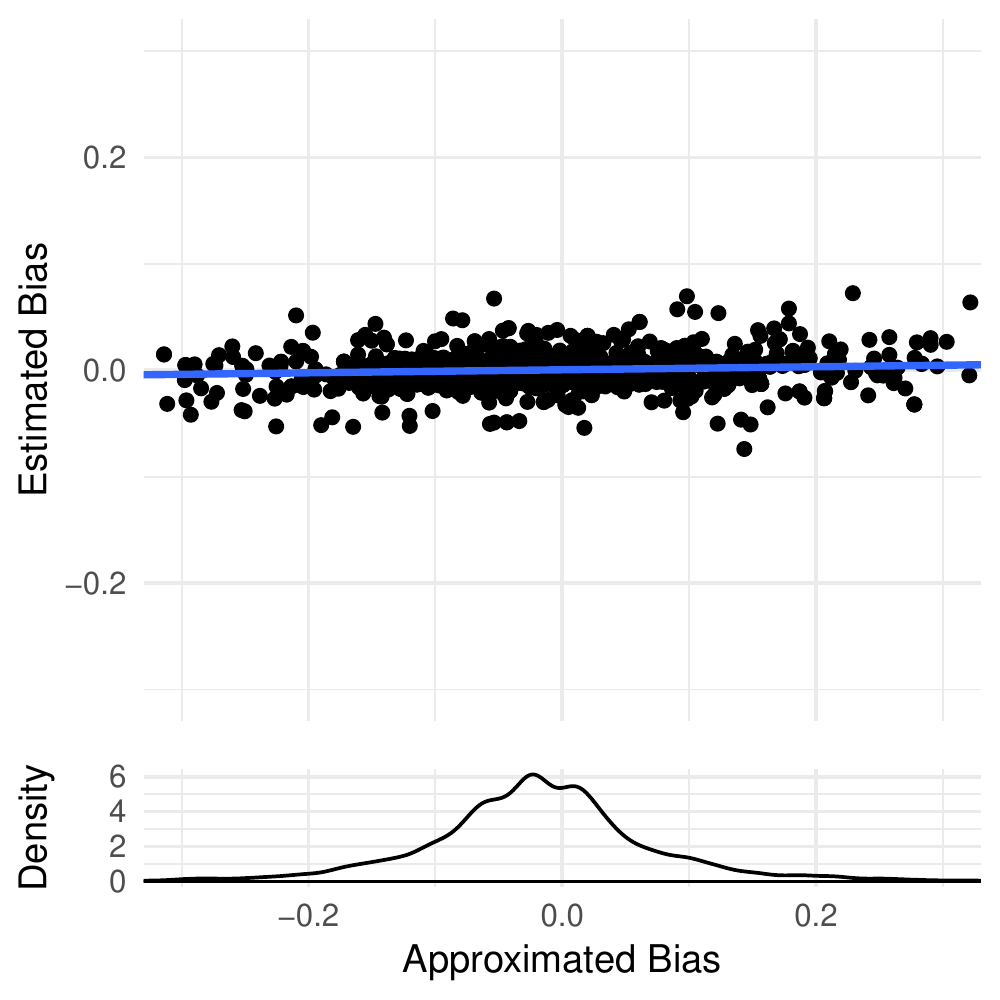}
			\caption{ Approximated bias using Algorithm \ref{alg:computeBIAS} versus estimated bias using Algorithm \ref{alg:estimateBIAS} and X--RF.}
			\label{fig:biasestimation}
\end{figure}

\subsection{CI-Simulation 2: Confounding without overlap} \label{sec:confwo}
In observational studies, researchers have no control over the treatment assignment process and, in some cases, even the overlap condition may be violated. That is, there exists a subgroup of units that is identifiable by observed features for which the propensity score is 0 or 1. Consequently, all units are either treated or not and estimating the CATE is impossible without very strong assumptions. We generally advise researchers to be very cautious when using these methods on observational data. 
In this section, we want to study how well one can estimate confidence intervals in observational studies where the overlap condition is violated. Ideally, we would hope that the confidence intervals in areas with no overlap are extremely wide. 

To test the behavior of the different confidence interval estimation methods, we set up another simulation based on real data. In this simulation we intentionally violate the overlap condition by assigning all units between 30 and 40 years to the control group. We then compared the confidence intervals for this subgroup with the other units where the overlap condition is not violated. 
For our simulation, we follow the same steps as in Section \ref{section:CIcomparison}, but we modified Step 3 to ensure that all units between 30 and 40 years of age are in the control group. 
Specifically, we construct the data in the following way: 
\begin{enumerate} 
	\item
	We start by training the T-learner with random forests on the entire GOTV data set to construct CATE estimates. We take this estimate as the ground truth and call it $\tau(x)$. 
	\item 
	We then use $\tau(x)$ to impute the missing potential outcomes.
	That is, for a unit in the control group, we add $\tau(x_i)$ to the observed outcome to obtain the outcome under treatment, and for each unit in the treatment group, we subtract $\tau(x_i)$ from the observed outcome to obtain the outcome under control.
	\item Next, we create a new treatment assignment by permuting the original treatment assignment vector and  assigning all entries for  units  between 30 and 40 years old to the control group.
	This also determines our new observed outcome. 
	\item Finally, we sample uniformly and without replacement two test sets and one training set. 
	We first sample the training set of $50,000$ observations. 
	Next, we sample the first test set of $20,000$ units out of all units that are not in the 30 to 40-year-old age group. This test set is called the \textbf{overlap test set}.
	Finally, we sample the second test set of $20,000$ units out of all units in the 30 to 40-year-old age group and we call this test set the  \textbf{non-overlap test set}.  
\end{enumerate}
Note that by construction the overlap condition is violated for the subgroup of units between 30 and 40 years and satisfied for units outside of that age group.

We trained each method on the training set and estimated the confidence intervals for the CATE in both test sets. 
The middle and the right part of Figure \ref{fig:coveragevslength} shows the results for the overlap test set and the non-overlap test set, respectively.
We find that the coverage and the average confidence interval length for the overlap test set is very similar to that of the previous simulation study, CI-Simulation 1. This is not surprising, because the two setups are very similar and the overlap condition is satisfied in both. 

The coverage and the average length of the confidence intervals for the non-overlap test set is, however, very different. 
For this subgroup, we do not have overlap.
We should be very cautious when estimating the CATE or confidence intervals of the CATE, and we would hope to see this reflected by very wide confidence intervals.
This is unfortunately not the case. 
We observe that the confidence intervals are tighter and the coverage is much lower than on the data where we have overlap. 
This is a problematic finding and suggests that confidence interval estimation in observational data is extremely difficult and that a violation of the overlap condition can lead to invalid inferences.

\section{Stability of the Social Pressure Analysis across Meta-learners}
In Figure \ref{fig:gotv}, we present how the CATE varies with the observed covariates. We find a very interesting behavior in the fact that the largest treatment effect can be observed for potential voters who voted three or four times before the 2004 general election. The treatment effect for potential voters who voted in none or all five of the observed elections was much smaller.
We concluded this based on the output of the X-learner.
To show that a similar conclusion can be drawn using different meta-learners, we repeated our analysis with the S and T learner (cf. Figure \ref{fig:gg2rslfeaturevstecmb}). We find that the output is almost identical to the output of the X-learner. This is not surprising since the data set is very large and most of the covariates are discrete. 
\begin{figure}
	\centering
	\includegraphics[width=0.9\linewidth]{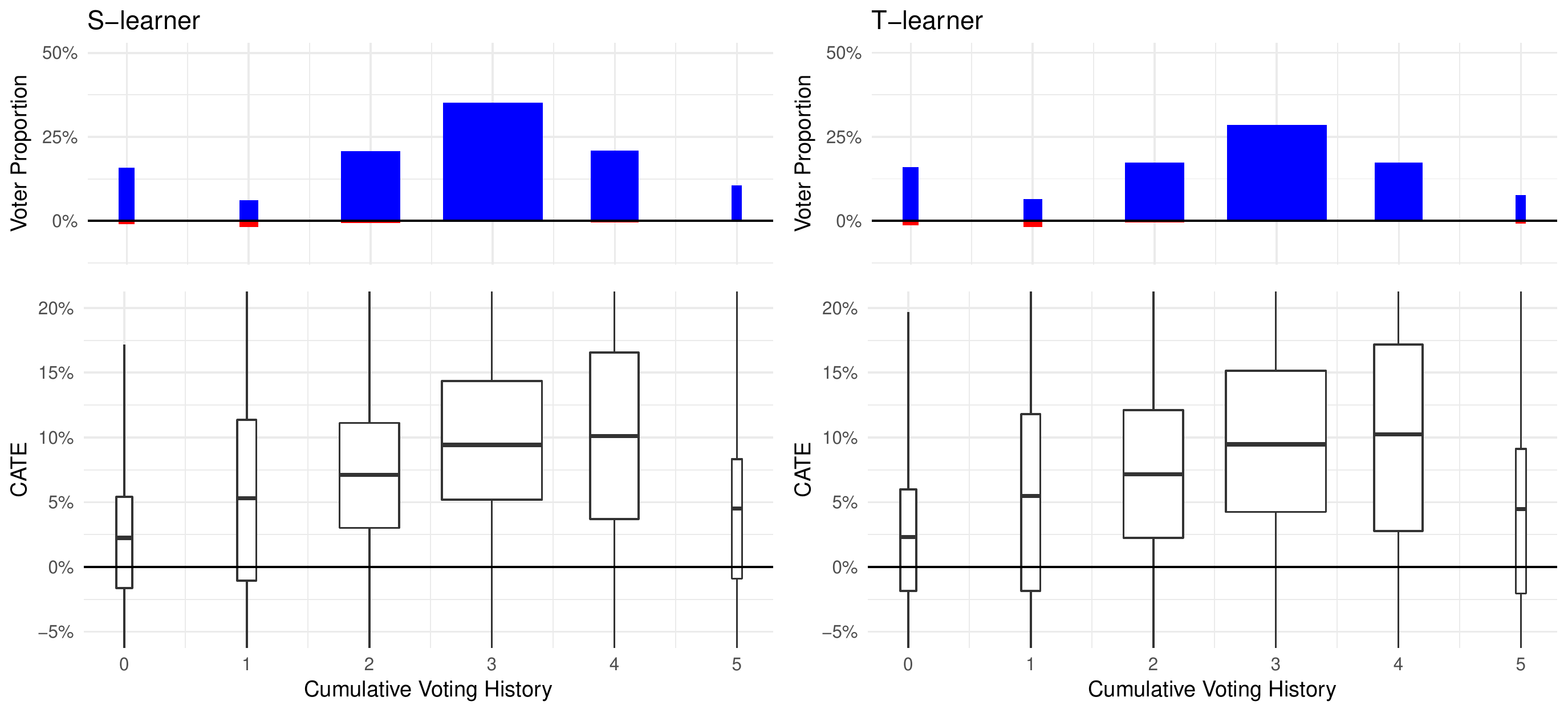}
	\caption{Results for the S-learner (left) and the T-learner (right) for the get-out-the-vote experiment.}
	\label{fig:gg2rslfeaturevstecmb}
\end{figure}

\section{The Bias of the S-learner in the Reducing Transphobia Study}
For many base learners, the S-learner can completely ignore the treatment assignment and thus predict a 0 treatment effect. This often leads to a bias toward 0, as we can see in Figure \ref{fig:trans}. To further analyze this behavior, we trained a random forest estimator on the transphobia data set with 100,000 trees, and we explored how often the individual trees predict a 0 treatment effect by not splitting on the treatment assignment. 
Figure \ref{fig:snosplit} shows that the trees very rarely split on the treatment assignment. This is not surprising for this data set since the covariates are very predictive of the control response function and the treatment assignment is a relatively weak predictor. 
\begin{figure}[h!]
	\centering
	\includegraphics[width=0.7\linewidth]{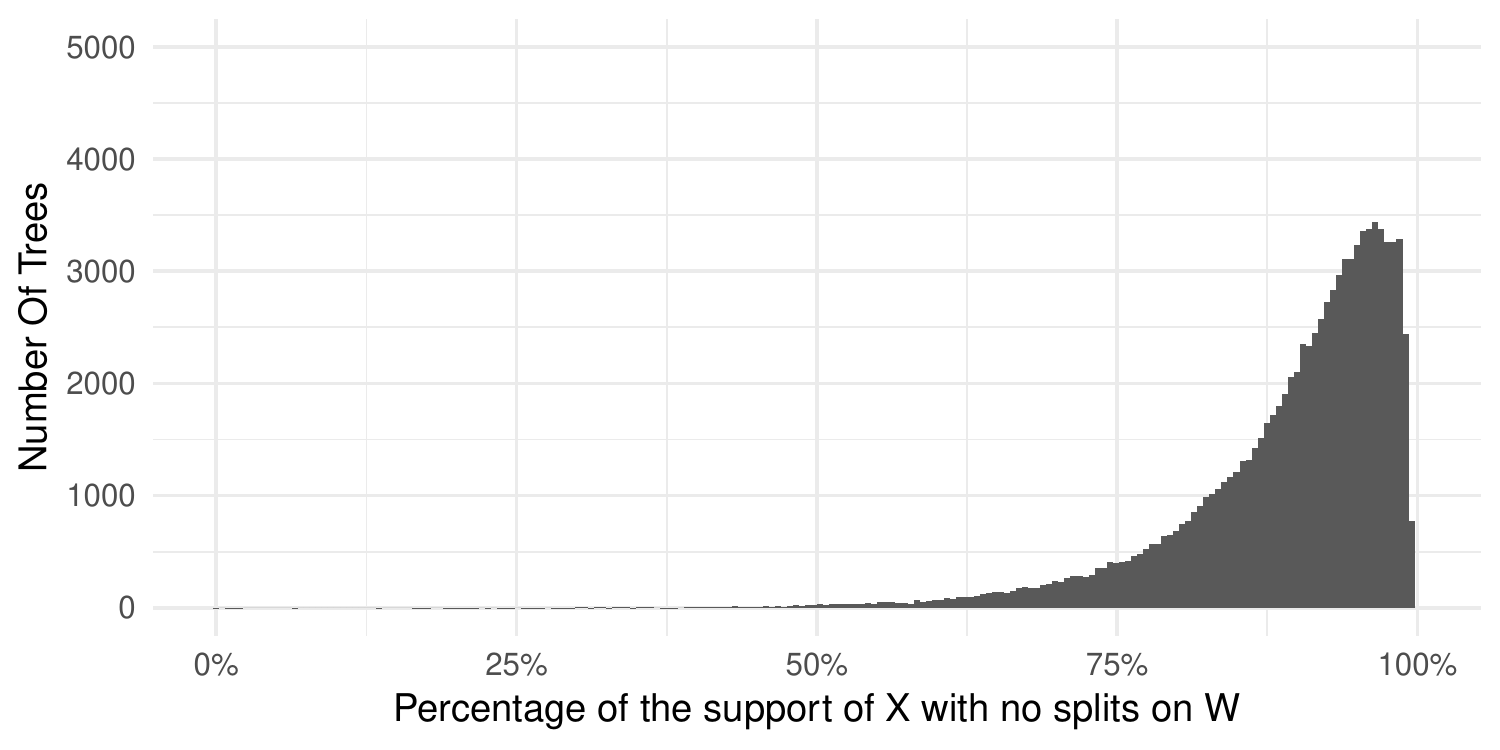}
	\caption{This figure is created from an S--RF learner to show that the S-learner often ignores the treatment effect entirely. It is based on 100,000 trees and it shows the histogram of trees by what percentage of the support of $X$ is not split on $W$.}
	\label{fig:snosplit}
\end{figure}

\section{Adaptivity to Different Settings and Tuning} \label{section:adaptivity}
Tuning the base learners to receive better CATE estimators or even selecting the best CATE estimator from a finite set of CATE estimators is very difficult, and our recent R package, \texttt{hte}, attempts to implement some tuning and selection methods. 
This is, however, very difficult and in the preceding sections, we did not tune our random forest algorithm or our BART estimators on the given data sets. Instead, we used fixed hyperparameters that were chosen in a different simulation study. 
In the sequel, we show that tuning the base learners and being able to select the best meta-learner can be very beneficial to constructing a good CATE estimator.

We conduct a simple experiment showing the potential benefits of hyperparameter tuning of the base learners. Specifically, we evaluate S--RF, T--RF, and X--RF in Simulations \ref{sim:globallin} and \ref{sim:complexlin}. We sample 1,000 hyperparameter settings for each of the learners and evaluate them in both simulations. In other words, for each hyperparameter setting, we obtain an MSE for Simulation \ref{sim:globallin} and an MSE for Simulation \ref{sim:complexlin}. 

Figure \ref{fig:tuningsituation} shows the MSE pairs. 
As expected, we observe that the T-learner generally does very well when the treatment effect is complex, while it does rather poorly when the treatment effect is simple. This was expected as the T-learner generally performs poorly compared to the S-learner when the treatment effect is simple or close to 0. 
Also as expected, the S-learner performs well when the treatment effect is simple, but it performs relatively poorly compared to the T-learner when the treatment effect is complex. 
The X-learner, on the other hand, is extremely adaptive. In fact, depending on the set of hyperparameters, the X-learner can perform as well as the T-learner or the S-learner.
However, there is not a single set of parameters that is optimal for both settings. In fact, the optimal settings almost describe a utility curve.

\begin{figure*}[h]
	\centering
	\includegraphics[width=.5\linewidth]{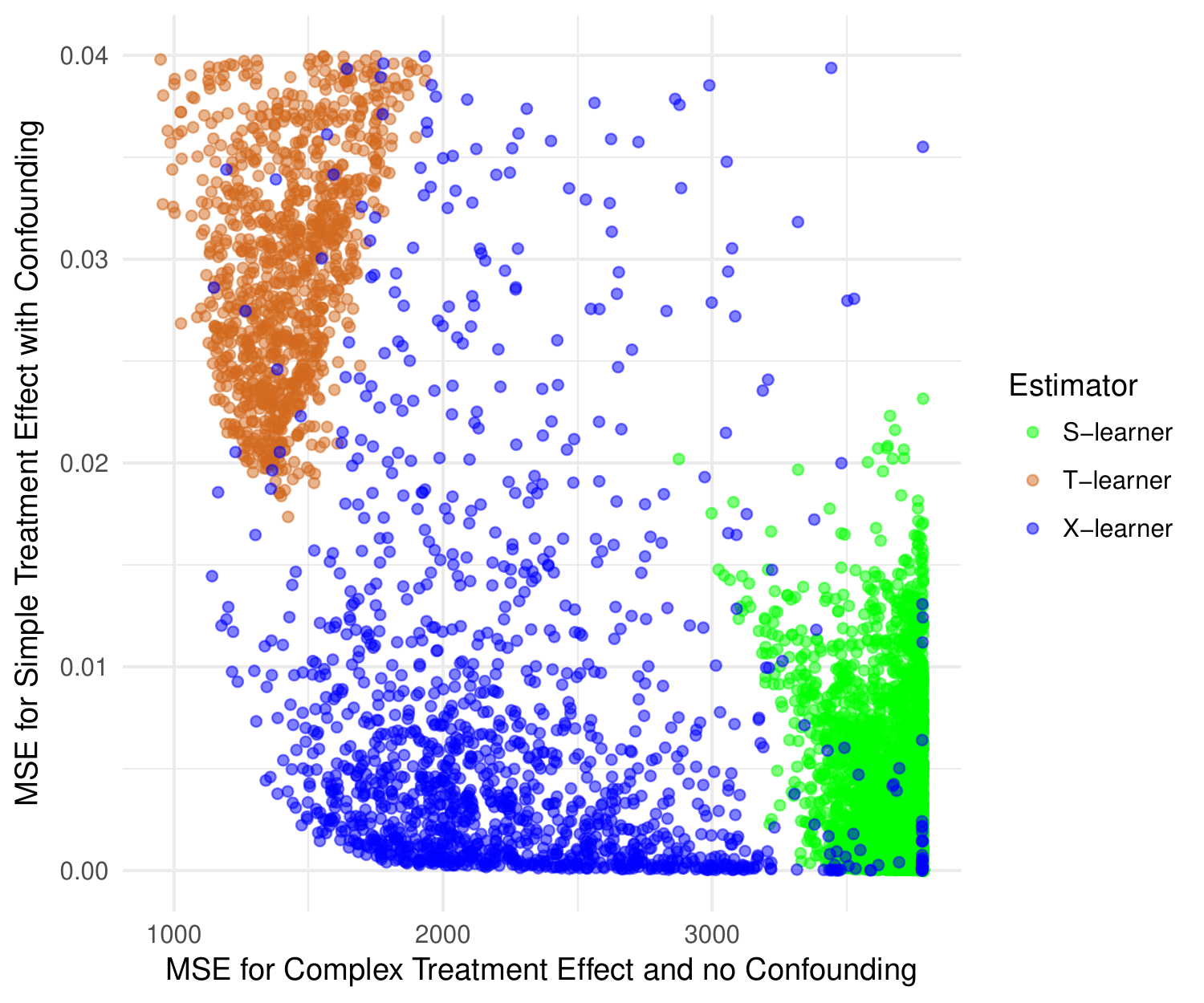}
	\caption{ 
		Each point corresponds to a different hyperparameter setting in random forests as the base learner in one of the S-, T-, or X-learners. The y-axis value is the MSE of Simulation \ref{sim:globallin} and the x-axis value is the MSE in Simulation \ref{sim:complexlin}. A perfect estimator that gets an MSE error of 0 in both simulations would thus correspond to a point at the origin (0,0). The training set size had 1,000 units and the test set that was used to estimate the MSE had 10,000 units.}
	\label{fig:tuningsituation}
\end{figure*}

\subsection{Setting the Tuning Parameters}
Since tuning each algorithm for each data set separately turns out to be very challenging, we decided to hold the hyperparameters fix for each algorithm. 
To chose those preset hyperparameters, we used the 2016 Atlantic Causal Inference Conference competition \cite{dorie2017automated}, and we chose the parameters in such a way that the algorithms perform very well in this competition. 
Specifically, we randomly generated for each algorithm 10,000 hyperparameters. We then evaluated the performance of these 10,000 hyperparameter settings on the 20 data sets of the ``Do it yourself!''-challenge, and we chose the hyperparameter combination which did best for that challenge.

\section{Conditioning on the Number of Treated Units} \label{section:conditioning_n}
In our theoretical analysis, we assume a superpopulation and we condition on the number of treated units both to avoid the problem that with a small but non-zero probability all units are in the treatment group or the control group and to be able to state the performance of different estimators in terms of $n$, the number of treated units, and $m$, the number of control units. This conditioning, however, leads to nonindependent samples.
The crucial step in dealing with this dependent structure is to condition on the treatment assignment, $W$.

Specifically, there are three models to be considered. 
\begin{enumerate}
	\item 
		The first one is defined by \refb{model:basic}. It specifies a distribution, $\Pcal$, of $(X, W, Y)$, and we assume to observe $N$ independent samples from this distribution, 
		$$
			\left(X_i, W_i, Y_i\right)_{i=1}^N \overset{iid}\sim \Pcal.
		$$
		We denote the joint distribution of $\left(X_i, W_i, Y_i\right)_{i=1}^N$ by $\Pcal^N$. 
	\item 
		We state our technical results in terms of a conditional distribution. For a fixed $n$ with $0 < n < N$, we consider the distribution of $\left(X_i, W_i, Y_i\right)_{i=1}^N$ given that we observe $n$ treated units and $m = N - n$ control units. We denote this distribution by $\Pcal^{nm}$.
		$$
			\left[\left(X_i, W_i, Y_i\right)_{i=1}^N \bigg | \sum_{i = 1}^N W_i = n \right] \sim \Pcal^{nm}.
		$$
		Note that under $\Pcal^{nm}$ the $\left(X_i, W_i, Y_i\right)$ are identical in distribution, but not independent.
	\item 	
		For technical reasons, we also introduce a third distribution, which we will use only in some of the proofs. Here, we condition on the vector of treatment assignments, $W$. 
		$$
			\left[\left(X_i, W_i, Y_i\right)_{i=1}^N \big | W = w \right] \sim \Pcal^{w}.			
		$$
		Under this distribution $W$ is non-random and $(X_i, Y_i)$ are not identical in distribution. However, within each treatment group the $(X_i, Y_i)$ tuples are independent and identical in distribution. 
		To make this more precise, define $\Pcal_1$ to be the conditional distribution of $(X, Y)$ given $W = 1$; then, under $\Pcal^w$, we have
		$$
			\left(X_i, Y_i\right)_{W_i = 1} \overset{iid} \sim \Pcal_1. 
		$$
		We prove these facts as follows. 
\end{enumerate}

\begin{theorem} \label{Theorem:CondIndi}
		Let $n$ and $N$ be such that $0 < n < N$ and let $w \in \{0,1\}^N$ with $\sum_{i=1}^N w_i = n$. 
		Then, under the distribution $\Pcal^w$, 
		$$
			(X_k, Y_k)_{W_k = 1} \overset{iid}{\sim} \Pcal_1.
		$$
\end{theorem}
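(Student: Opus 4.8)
The plan is to exploit the mutual independence of the $N$ tuples $(X_i, W_i, Y_i)$ under $\Pcal^N$: conditioning on the coordinatewise event $\{W = w\} = \bigcap_{i=1}^N \{W_i = w_i\}$ preserves independence across $i$ while replacing each within-tuple marginal by the appropriate conditional law. First I would observe that, by the overlap Condition \ref{cond:overlab}, we have $\P(W_i = 1) = \E[e(X_i)] \in (\emin, \emax)$, so every configuration $w \in \{0,1\}^N$ satisfies $\P(W = w) = \prod_{i=1}^N \P(W_i = w_i) > 0$. Thus $\Pcal^w$ is an ordinary conditional distribution obtained by dividing by a positive probability, and no regular-conditional-distribution machinery is needed.

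The core computation is to evaluate, for any collection of bounded measurable functions $\{h_k\}_{k : w_k = 1}$ on $\R^d \times \R$, the conditional expectation
\begin{equation*}
\E\Bigl[ \prod_{k : w_k = 1} h_k(X_k, Y_k) \,\Big|\, W = w \Bigr]
= \frac{\E\bigl[ \prod_{k : w_k = 1} h_k(X_k, Y_k)\, \mathbb{I}(W = w) \bigr]}{\P(W = w)}.
\end{equation*}
Writing $\mathbb{I}(W = w) = \prod_{i=1}^N \mathbb{I}(W_i = w_i)$ and invoking the mutual independence of the tuples, both numerator and denominator factor as products over $i$. The factors indexed by $i$ with $w_i = 0$ are identical in numerator and denominator (each equal to $\P(W_i = 0)$) and cancel, while each factor with $w_k = 1$ reduces to $\E[h_k(X_k, Y_k) \mid W_k = 1] = \int h_k \, d\Pcal_1$ by the definition of $\Pcal_1$ as the law of $(X, Y)$ given $W = 1$. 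Hence the displayed conditional expectation equals $\prod_{k : w_k = 1} \int h_k \, d\Pcal_1$, which is precisely the assertion that $(X_k, Y_k)_{W_k = 1}$ are i.i.d.\ with law $\Pcal_1$ under $\Pcal^w$.

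To promote this test-function identity to the claimed distributional statement, I would run a standard monotone-class (or $\pi$--$\lambda$) argument: the product functions $\prod_k h_k$ with each $h_k$ bounded measurable form a determining class for the product $\sigma$-algebra on $(\R^d \times \R)^n$, so matching expectations on this class identifies the joint conditional law as the product measure $\Pcal_1^{\otimes n}$. The main obstacle is bookkeeping rather than conceptual: one must be careful that the dependence of $Y_i$ on $W_i$ (since $Y_i = Y_i(W_i)$) lives entirely within a single tuple and is therefore fully absorbed into the within-coordinate conditional law $\Pcal_1$, creating no cross-coordinate dependence after conditioning. Once this is noted, the factorization goes through verbatim.
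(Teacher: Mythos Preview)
Your proof is correct and follows essentially the same factorization idea as the paper: exploit the mutual independence of the tuples $(X_i,W_i,Y_i)$ under $\Pcal^N$ so that both numerator and denominator of the conditional probability split as products over $i$, with the $w_i=0$ factors cancelling and the $w_k=1$ factors reducing to the single-coordinate conditional law $\Pcal_1$. The paper carries this out in two separate lemmas (independence in Lemma~\ref{muindi}, identical distribution in Lemma~\ref{iddis}) using indicator sets, whereas you do both at once with test functions---a cleaner packaging of the same argument.
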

We prove this in two steps. In Lemma \ref{muindi}, we prove that the distributions are independent and in Lemma \ref{iddis} we prove that they are identical. 

\begin{lemma}[independence] \label{muindi}
		Let $n$, $N$, and $w$ be as in Theorem \ref{Theorem:CondIndi} and define 
		$ S = \{j \in \N : w_j = 1\}$.
		Then for all $\emptyset \neq \I \subset S$, and all $(B_i)_{i \in \I}$ with $B_i \subset \R^p \times \R,$
		\begin{equation} \label{independence}
		\P\left(\bigcap_{i \in \I} \{(X_i, Y_i) \in B_i \}\bigg | W = w\right) 
		= 
		\prod_{i \in \I} \P\left((X_i, Y_i) \in B_i  \bigg | W = w\right).
		\end{equation}
\end{lemma}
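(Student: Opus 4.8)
The plan is to exploit the product structure that independence of the iid tuples $(X_i, W_i, Y_i) \sim \Pcal$ confers, together with the fact that we condition on the \emph{entire} assignment vector $W = w$ rather than merely on the count $\sum_i W_i = n$. Conditioning on the full vector is precisely what makes the coordinates decouple: the event $\{W = w\}$ is a product event $\bigcap_{j=1}^N \{W_j = w_j\}$ whose factors concern different, independent tuples. (Had we conditioned only on $\sum_i W_i = n$, this decoupling would fail, which is the whole reason the excerpt distinguishes $\Pcal^{nm}$ from $\Pcal^w$.)

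First I would reduce the conditional probability to a ratio of unconditional ones. Since $0 < n < N$ and, by the overlap Condition \refb{cond:overlab}, each $\P(W_j = w_j) > 0$, the event $\{W = w\}$ has positive probability and the conditioning is well defined, giving
$$
\P\left(\bigcap_{i \in \I}\{(X_i,Y_i)\in B_i\}\,\Big|\,W=w\right) = \frac{\P\left(\bigcap_{i \in \I}\{(X_i,Y_i)\in B_i\}\cap\bigcap_{j=1}^N\{W_j=w_j\}\right)}{\P(W=w)}.
$$
Next I would apply independence of the tuples to factorize both numerator and denominator across indices. In the numerator each $i \in \I$ contributes $\P((X_i,Y_i)\in B_i,\, W_i = w_i)$ and each $j \notin \I$ contributes $\P(W_j = w_j)$, while the denominator is $\prod_{j=1}^N \P(W_j = w_j)$. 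Cancelling the common factors for $j \notin \I$ leaves
$$
\prod_{i \in \I}\frac{\P\big((X_i,Y_i)\in B_i,\, W_i=w_i\big)}{\P(W_i=w_i)} = \prod_{i \in \I}\P\big((X_i,Y_i)\in B_i\,\big|\,W_i=w_i\big).
$$

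Finally I would identify each single-coordinate factor with the conditional law given the full vector: running the same factorization with $\I$ replaced by a singleton $\{i\}$ shows $\P((X_i,Y_i)\in B_i \mid W = w) = \P((X_i,Y_i)\in B_i \mid W_i = w_i)$, i.e.\ conditioning on the whole vector is equivalent to conditioning on the $i$-th coordinate alone. Substituting this into the previous display yields the claimed product formula \refb{independence}. The argument is essentially bookkeeping and I do not expect a genuine obstacle; the only points requiring care are measure-theoretic rather than conceptual. Specifically, I would justify the factorization of the \emph{mixed} joint probability (continuous in the $(X_i,Y_i)$, discrete in the $W_j$) by writing it against the product measure $\Pcal^N$ and invoking Tonelli, and I would record the positivity of the conditioning event at the outset so that every ratio above is meaningful.
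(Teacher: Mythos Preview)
Your proposal is correct and follows essentially the same approach as the paper: write the conditional probability as a ratio, factorize numerator and denominator using the mutual independence of $(X_i,W_i,Y_i)_{i=1}^N$, cancel the common $\P(W_j=w_j)$ factors for $j\notin\I$, and then recognize each remaining factor as $\P((X_i,Y_i)\in B_i\mid W=w)$ via the singleton case. Your presentation is in fact slightly more streamlined than the paper's, which performs the same cancellation through a more roundabout algebraic rewriting of the ratio $\P(\bigcap_{j\in S\setminus\I}\{W_j=1\}\cap\bigcap_{k\in S^c}\{W_k=0\})/\P(W=w)$ before reassembling.
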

Note that another way of writing \refb{independence} is 
\begin{equation}
	\P^w\left(\bigcap_{i \in \I} \{(X_i, Y_i) \in B_i \}\right) 
	= 
	\prod_{i \in \I} \P^w\left((X_i, Y_i) \in B_i\right).
\end{equation}

\begin{proof}[Proof of Lemma \ref{muindi}]
	\begin{align*}
	&\P\left(\bigcap_{i \in \I} \{(X_i, Y_i) \in B_i \} \bigg | W = w\right) 
	\\&=
	\P\left(\Big(\bigcap_{i \in \I} \{(X_i, Y_i) \in B_i \} \Big) 
	~  \cap ~ 
	\Big( \bigcap_{j \in S} \{W_j = 1\} \cap \bigcap_{k \in S^c} \{W_k = 0\} \Big) \right) 
	/~
	\P\Big(W = w \Big)
	\\&=
	\P\left(\Big(\bigcap_{i \in \I} \big\{(X_i, Y_i, W_i) \in B_i \times \{1\} \big\}\Big) 
	~  \cap ~ 
	\Big(\bigcap_{j \in S \setminus \I} \{W_j = 1\} \cap \bigcap_{k \in S^c} \{W_k = 0\} \Big) \right) 
	/
	\P\Big(W = w \Big)
	\\&=
	\prod_{i \in \I} \P\Big((X_i, Y_i, W_i) \in B_i \times \{1\}\Big) 
	\frac{
		\P \Big( \bigcap_{j \in S \setminus \I} \{W_j = 1\} \cap \bigcap_{k \in S^c} \{W_k = 0\} \Big)
	}{
		\P\Big(W = w \Big)
	} = (*).
	\end{align*}
	The last equality holds because $(X_i, Y_i, W_i)_{i = 1}^N$ are mutually independent. 
	The second term can be rewritten in the following way:
	\begin{align*}
	\frac{\P 
		\Big( \bigcap_{j \in S \setminus \I} \{W_j = 1\} \cap \bigcap_{k \in S^c} \{W_k = 0\} \Big)
	}{
		\P\Big(W = w \Big) 
	}
	&=
	\frac{
		\prod_{j \in S \setminus \I} \P(W_j = 1) \prod_{k \in S^c} \P(W_k = 0)
	}{\prod_{j \in S} \P(W_j = 1) \prod_{k \in S^c} \P(W_k = 0)}
	\\&=
	\prod_{j \in J}
	\frac{1
	}{\P(W_j = 1)}
	\\&=
	\prod_{j \in J}
	\frac{
		\prod_{j \in S \setminus \{j\}} \P(W_j = 1) \prod_{k \in S^c} \P(W_k = 0)
	}{\prod_{j \in S} \P(W_j = 1) \prod_{k \in S^c} \P(W_k = 0)}
	\\&=
	\prod_{i \in \I} 
	\frac{
		\P \bigg[
		\bigcap_{j \in S \setminus \{i \}} \{W_j = 1\} \cap \bigcap_{k \in S^c} \{W_k = 0\}
		\bigg]
	}{
		\P\Big[W = w \Big]
	}.
	\end{align*}

	Thus, 
	\begin{align*}
	(*)&=
	\prod_{i \in \I} 
	\P\bigg[
	(X_i, Y_i, W_i) \in B_i \times \{1\}  
	\bigg]
	\prod_{i \in \I} 
	\frac{
		\P \bigg[
		\bigcap_{j \in S \setminus \{i \}} \{W_j = 1\} \cap \bigcap_{k \in S^c} \{W_k = 0\}
		\bigg]
	}{
		\P\Big[W = w \Big]
	}
	\\&=
	\prod_{i \in \I} 
	\bigg(
	\P\bigg[
	(X_i, Y_i, W_i) \in B_i \times \{1\}  
	~  \cap ~ 
	\Big( \bigcap_{j \in S \setminus \{i \}} \{W_j = 1\} \cap \bigcap_{k \in S^c} \{W_k = 0\} \Big)
	\bigg]
	/~
	\P\Big[W = w \Big]
	\bigg)
	\\&=
	\prod_{i \in \I} 
	\bigg(
	\P\left((X_i, Y_i) \in B_i  ~  \cap ~ \Big\{W = w \Big\}\right)
	/~
	\P\Big(W = w \Big)
	\bigg)
	\\&
	\prod_{i \in \I} \P\left((X_i, Y_i) \in B_i  \bigg | W = w\right),
	\end{align*}
	which completes the proof.
\end{proof}

Next, we are concerned with showing that all treated units have the same distribution. 
\begin{lemma}[identical distribution] \label{iddis}
	Assume the same assumptions as in Lemma \ref{muindi} and let $i \neq j \in S$. Under the conditional distribution of $W = w$, $(X_i, Y_i)$ and $(X_j, Y_j)$ have the same distribution, $\Pcal_1$.
\end{lemma}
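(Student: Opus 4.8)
The plan is to reduce the conditional law given the full assignment vector $W = w$ to the much simpler conditional law given only the single coordinate $W_i = 1$, exploiting that the tuples $(X_k, Y_k, W_k)_{k=1}^N$ are mutually independent \emph{across units} (even though, within a unit, $W_i$ depends on $X_i$ through the propensity score $e$). Once this reduction is carried out, the claim follows immediately, because all units are identically distributed and hence the single-coordinate conditional law cannot depend on $i$.

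Concretely, I would fix $i \in S$ and a measurable set $B \subset \R^p \times \R$, and first write the conditional probability as a ratio,
\[
\P\left((X_i, Y_i) \in B \mid W = w\right)
=
\frac{\P\left((X_i, Y_i) \in B,\ W = w\right)}{\P(W = w)}.
\]
Since $i \in S$, the event $\{W = w\}$ forces $W_i = 1$ and fixes $W_k = w_k$ for all $k \neq i$. The event $\{(X_i, Y_i) \in B,\ W_i = 1\}$ is a function of the $i$-th tuple alone, whereas $\{W_k = w_k\}_{k \neq i}$ is a function of the remaining tuples; by mutual independence across units, both numerator and denominator factor, and the common factor $\prod_{k \neq i} \P(W_k = w_k)$ cancels. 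This yields
\[
\P\left((X_i, Y_i) \in B \mid W = w\right)
=
\frac{\P\left((X_i, Y_i) \in B,\ W_i = 1\right)}{\P(W_i = 1)}
=
\P\left((X_i, Y_i) \in B \mid W_i = 1\right).
\]

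To finish, I would observe that, because the tuples are identically distributed, the right-hand side is the same for every index whose $w$-coordinate equals $1$; by definition this common law is exactly $\Pcal_1$, the conditional distribution of $(X, Y)$ given $W = 1$. Hence for any $i \neq j \in S$ the conditional laws of $(X_i, Y_i)$ and $(X_j, Y_j)$ given $W = w$ both equal $\Pcal_1$ and are therefore identical, which is what the lemma asserts.

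The step requiring the most care is the factorization: one must resist the temptation to treat $W_i$ as independent of $(X_i, Y_i)$ (it is not, since $W_i \sim \mbox{Bern}(e(X_i))$) and instead factor \emph{across units}, keeping the within-unit dependence of the $i$-th tuple intact inside the numerator. Everything else is bookkeeping, and the computation closely mirrors the independence argument already carried out in the proof of Lemma \ref{muindi}.
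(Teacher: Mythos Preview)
Your proposal is correct and follows essentially the same route as the paper's proof: reduce $\P((X_i,Y_i)\in B\mid W=w)$ to $\P((X_i,Y_i)\in B\mid W_i=1)$ by factoring across the independent tuples and cancelling $\prod_{k\neq i}\P(W_k=w_k)$, then invoke identical distribution to conclude this equals $\Pcal_1$ regardless of $i\in S$. The paper presents this as a terse chain of equalities, whereas you spell out the factorization step explicitly, but the argument is the same.
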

\begin{proof} 
	Let $B \subset \R^p \times \R$; then 
	\begin{align*}
	\P\left((X_i, Y_i) \in B \bigg| W = w\right) 
	&\overset{*}=
	\P\left((X_i, Y_i) \in B \big|W_i = 1\right) 
	\\&=
	\frac{
		\P\left((X_i, Y_i, W_i) \in B \times \{1\}\right) 
	}{
		\P(W_i = 1) 
	}
	\\&\overset{a}=
	\frac{
		\P\left((X_j, Y_j, W_j) \in B \times \{1\}\right) 
	}{
		\P(W_j = 1) 
	}
	\\& = 
	\P\left((X_j, Y_j) \in B \big|W_j = 1\right) 
	\\&\overset{*}=
	\P\left((X_j, Y_j) \in B \bigg| W = w\right).
	\end{align*}
	Here $\overset{*}~$ follows from $(X_i, Y_i, W_i)_{i = 1}^N$ being mutually independent, and $\overset{a}~$ follows from $(X_i, Y_i, W_i)_{i = 1}^N$ being identically distributed under $\Pcal$. 
\end{proof}

\section{Convergence Rate Results for the T-learner} \label{Conv:Tlearner}
In this section, we want to prove that 
$$
\EMSE(\hat\tau_T^{mn}, \Pcal) 
\le 
C(n^{-a_\mu} + m^{-a_\mu}).
$$
We start with a short lemma that will be useful for the proof of the theorem. 
\begin{lemma} \label{Lemma:DistChange}
	Let $\Pcal$ be defined as in \refb{model:basic} with $0 < \emin < e(x) < \emin < 1$. 
	Furthermore, let $X, W$ be distributed according to $\Pcal$, and let $g$ be a positive function such that the expectations below exist; then 
	\begin{align}
		\frac{\emin}{\emax}\E[g(X)] ~\le~ &\E[g(X) | W = 1] ~\le~ \frac{\emax}{\emin}\E[g(X)], \label{Ineq:W1}
	   	\\\frac{1 - \emax}{1 - \emin}\E[g(X)] ~\le~ &\E[g(X) | W = 0]  ~\le~ \frac{1 - \emin}{1-\emax}\E[g(X)]. \label{Ineq:W0}
	\end{align}
\end{lemma}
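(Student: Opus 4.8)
The plan is to reduce both chains of inequalities to a single change-of-measure identity and then to bound the resulting numerator and denominator separately. First I would rewrite the conditional expectation by conditioning on $X$. Since $W \sim \mbox{Bern}(e(X))$ we have $\P(W = 1 \mid X = x) = e(x)$, so by the tower property $\E[g(X)\mathbf{1}\{W=1\}] = \E[g(X)\,\E[\mathbf{1}\{W=1\}\mid X]] = \E[g(X)e(X)]$ and, taking $g \equiv 1$, $\P(W=1) = \E[e(X)]$. This yields the key identity $\E[g(X)\mid W = 1] = \E[g(X)e(X)] / \E[e(X)]$, where the denominator is strictly positive because $e(x) > \emin > 0$.

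Next I would bound the numerator and the denominator using the overlap condition $\emin < e(x) < \emax$ together with the positivity of $g$. Since $g \ge 0$, multiplying the pointwise bounds $\emin < e(X) < \emax$ by $g(X)$ and taking expectations gives $\emin \E[g(X)] \le \E[g(X)e(X)] \le \emax \E[g(X)]$, while the same bounds with $g \equiv 1$ give $\emin \le \E[e(X)] \le \emax$. Dividing the extreme cases of the numerator by the extreme cases of the denominator then produces $\frac{\emin}{\emax}\E[g(X)] \le \E[g(X)\mid W=1] \le \frac{\emax}{\emin}\E[g(X)]$, which is \eqref{Ineq:W1}.

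Finally, the $W = 0$ case is entirely symmetric: replacing $\mathbf{1}\{W=1\}$ by $\mathbf{1}\{W=0\}$ and using $\P(W = 0 \mid X = x) = 1 - e(x)$ gives the identity $\E[g(X)\mid W=0] = \E[g(X)(1-e(X))]/\E[1-e(X)]$, and the pointwise bounds $1-\emax < 1-e(x) < 1-\emin$ yield \eqref{Ineq:W0} by the identical division argument. There is no serious obstacle here; the only points requiring a little care are justifying the conditional-expectation identity (which follows immediately from $W$ being conditionally Bernoulli given $X$) and checking that the two denominators $\E[e(X)]$ and $\E[1-e(X)]$ are bounded away from zero, both of which are direct consequences of the overlap assumption $0 < \emin < e(x) < \emax < 1$.
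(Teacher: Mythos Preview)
Your proof is correct and follows essentially the same route as the paper: both rewrite $\E[g(X)\mid W=1]=\E[g(X)e(X)]/\E[e(X)]$ (the paper writes the denominator as $\E[W]$), bound numerator and denominator separately via $\emin<e(x)<\emax$, and invoke symmetry for \eqref{Ineq:W0}. Your version is slightly more explicit about the tower-property justification, but the argument is otherwise identical.
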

\begin{proof}[Proof of Lemma \ref{Lemma:DistChange}]
	Let us prove \refb{Ineq:W1} first. The lower bound follows from
	\begin{align*}
	\E[g(X) | W = 1] 
	\ge 
	\E[g(X)] \frac{\inf_x e(x)}{E[W]} 
	\ge 
	\frac{\emin}{E[W]} \E[g(X)] 
	\ge 
	\frac{\emin}{\emax} \E[g(X)],
	\end{align*}
	and the upper bound from
	\begin{align*}
		\E[g(X) | W = 1] 
		\le 
		\E[g(X)] \frac{\sup_x e(x)}{E[W]}
		\le 
		\frac{\emax}{\emin} \E[g(X)].
	\end{align*}
	\refb{Ineq:W0} follows from a symmetrical argument. 
\end{proof}

Let $m, n \in \mathbb{N}^+$ and $N = m+ n$ and let $\Pcal$ be a distribution of $(X, W, Y)$ according to \refb{model:basic} with the propensity score bounded away from 0 and 1. That is, there exists $\emin$ and $\emax$ such that $0 < \emin < e(x) < \emax < 1$. 
Furthermore, let $(X_i, W_i, Y_i)_{i = 1}^N$ be i.i.d. from $\Pcal$ and define $\Pcal^{nm}$ to be the conditional distribution of $(X_i, W_i, Y_i)_{i = 1}^N$ given that we observe $n$ treated units,
$
	\sum_{i = 1}^N W_i = n.
$

Note that $n$ and $m$ are not random under $\Pcal^{nm}$. 
We are interested in the performance of the T-learner, $\hat\tau_T^{mn}$, under $\Pcal^{nm}$ as measured by the EMSE,
$$
	\EMSE(\hat\tau_T^{mn}, \Pcal^{nm}) 
	\overset{\mbox{def}}=
	\E 
	\left[
	(\hat \tau_T^{mn}(\X) - \tau(\X))^2 \bigg| \sum_{i = 1}^N W_i = n
	\right].
$$
The expectation is here taken over the training data set $(X_i, W_i, Y_i)_{i=1}^N$, which is distributed according to $\Pcal^{nm}$, and $\X$, which is distributed according to the marginal distribution of $X$ in $\Pcal$.

For a family of superpopulations, $F \in S(a_\mu, a_\tau)$, we want to show that the T-learner with an optimal choice of base learners achieves a rate of  
$$
\mathcal{O}(m^{-a_\mu} + n^{-a_\mu}).
$$
An optimal choice of base learners is estimators that achieve the minimax rate of $n^{-a_\mu}$ and $m^{-a_\mu}$ in $F$. 

\begin{proof}[Conferegence Rate of the T-learner]
	The EMSE can be upper bounded by the errors of the single base learners:
	\begin{align*}
	\EMSE(\hat\tau_T^{mn}, \Pcal^{nm}) &=
	\E 
	\left[
	(\hat \tau_T^{mn}(\X) - \tau(\X))^2 \bigg| \sum_{i = 1}^N W_i = n
	\right]
	\\&\le
	2
	\underbrace{
		\E 
		\left[
		(\hat \mu^n_1(\X) - \mu_1(\X))^2 \bigg| \sum_{i = 1}^N W_i = n
		\right]
	}_A
	+ 
	2
	\underbrace{
		\E 
		\left[
		(\hat \mu^m_0(\X) - \mu_0(\X))^2 \bigg| \sum_{i = 1}^N W_i = n
		\right]
	}_B.
	\end{align*}
	Here we use the following inequality:
	$$
	(\hat \tau_T^{mn}(\X) - \tau(\X))^2 \le 2 (\hat \mu^n_1(\X) - \mu_1(\X))^2 + 2(\hat \mu^m_0(\X) - \mu_0(\X))^2.
	$$
	Let us look only at the first term. 
We can write
	\begin{align}
	A &= \E 
	\left[
	(\hat \mu^n_1(\X) - \mu_1(\X))^2 \bigg| \sum_{i = 1}^N W_i = n
	\right] \notag
	\\&=
	\E 
	\left[
	\E 
	\left[
	(\hat \mu^n_1(\X) - \mu_1(\X))^2 \bigg|  W, \sum_{i = 1}^N W_i = n
	\right]
	\bigg| \sum_{i = 1}^N W_i = n
	\right]. \label{iteratedExpectation}
	\end{align}
	It is of course not necessary to condition on $\sum_{i = 1}^N W_i = n$ in the inner expectation, and we only do so as a reminder that there are $n$ treated units. 
	
	For $i \in \{1, \ldots, n\}$, let $q_i$ be the $i^{\text{th}}$ smallest number in $\{k : W_k = 1\}$. That is, $\{q_i : i \in \{1, \ldots, n\}\}$ are the indexes of the treated units.
	To emphasize that $\hat \mu^n_1(\X)$ depends only on the treated observations, $(X_{q_i}, Y_{q_i})_{i= 1}^n$, we write
	$\hat \mu^n_1((X_{q_i}, Y_{q_i})_{i = 1}^n ,\X)$.
	Furthermore, we define $\Pcal_1$ to be the conditional distribution of $(X, Y)$ given $W=1$. Conditioning on $W$, Theorem \ref{Theorem:CondIndi} implies that $(X_{q_i}, Y_{q_i})_{i= 1}^n$ is i.i.d. from $\Pcal_1$.
	Let us define $\tilde \X$ to be distributed according to $\Pcal_1$. Then we can apply Lemma \ref{Lemma:DistChange} and use the definition of $S(a_\mu, a_\tau)$ to conclude that the inner expectation in \refb{iteratedExpectation} is in $\mathcal{O}(n^{-a_\mu})$:
	\begin{align*}
	\E 
	&\left[
	\hat \mu^n_1((X_{q_i}, Y_{q_i})_{i = 1}^n ,\X)- \mu_1(\X))^2 \bigg| W, \sum_{i = 1}^N W_i = n
	\right]
	\\&\le 
	\frac{\emax}{\emin}
	\E 
	\left[
	(\hat \mu^n_1((X_{q_i}, Y_{q_i})_{i = 1}^n ,\tilde\X) - \mu_1(\tilde\X))^2 \bigg| W, \sum_{i = 1}^n W_i = n
	\right]
	\\&\le 
	\frac{\emax}{\emin} C n^{-a_\mu}.
	\end{align*}
	Hence, it follows that
	$$
	A \le 
	2\E
	\left[
	\frac{\emax}{\emin} C n^{-a_\mu}
	\bigg| \sum_{i = 1}^n W_i = n
	\right] \le 
	2\frac{\emax}{\emin} C n^{-a_\mu}.
	$$
	
	By a symmetrical argument, it also holds that 
	$$
	B \le 
	2\frac{1 - \emin}{1 - \emax} C m^{-a_\mu},
	$$
	and we can conclude that
	$$
	\EMSE(\hat\tau_T^{mn}, \Pcal) 
	\le 
	2 C 
	\left[\frac{1 - \emin}{1 - \emax} +  \frac{\emax}{\emin} \right](n^{-a_\mu} + m^{-a_\mu}).
	$$
\end{proof}    

\section{Convergence Rate Results for the X-learner}
In this section, we are concerned with the convergence rate of the X-learner. Given our motivation of the X-learner in the main paper, we believe that $\hat \tau_0$ of the X-learner should achieve a rate of $\mathcal{O}(m^{-a_\tau} + n^{-a_\mu})$ and $\hat \tau_1$ should achieve a rate of $\mathcal{O}(m^{-a_\mu} + n^{-a_\tau})$. 
In what follows, we prove this for two cases, and we show that for those cases the rate is optimal. 
In the first case, we assume that the CATE is linear and thus $a_\tau = 1$. We don't assume any regularity conditions on the response functions, and we show that the X-learner with an OLS estimator in the second stage and an appropriate estimator in the first stage achieves the optimal convergence rate. We show this first for the MSE (Theorem \ref{theorem:unbalanced_ptwise})  and then for the EMSE (Theorem \ref{theorem:lineartau}).
We then focus on the case where we don't impose any additional regularity conditions on the CATE, but the response functions are Lipschitz continuous (Theorem \ref{Theorem:Lipschitz_Convergence}). 
The optimal convergence rate is here not obvious, and we will first prove a minimax lower bound for the EMSE, and we will then show that the X-learner with the KNN estimates achieves this optimal performance.

\subsection{MSE and EMSE convergence rate for the linear CATE} \label{App:Conv} \label{section:conv:linear}

\begin{theorem}[rate for the pointwise MSE] \label{theorem:unbalanced_ptwise}
	Assume that we observe $m$ control units and $n$ treated units from some superpopulation of independent and identically distributed observations $(Y(0), Y(1), X, W)$ coming from a distribution $\Pcal$ given in equation [\ref{model:basic}] and assume that the following assumptions are satisfied:
	\begin{enumerate}[label=B\arabic*]
		\item Ignorability holds.
		\item The treatment effect is linear, $\tau(x) = x^T \beta$, with $\beta \in \R^d$.
		\item  
		There exists an estimator $\hat \mu_0$ such that for all $x$, 
		$$
			\E\left[(\mu_0(x) - \hat \mu^m_0(x))^2 \bigg | \sum_{i=1}^N W_i = n \right] \le C^0 m^{-a}.
		$$
		\item The error terms $\varepsilon_i$ are independent given $X$, with $\E[\varepsilon_i|X = x] = 0$ and $\var[\varepsilon_i|X = x] \le \sigma^2 < \infty$.
		\item \label{theorem:lineartau_ptw:goodEV} The eigenvalues of the sample covariance matrix of the features of the treated units are well conditioned, in the sense that there exists an $n_0$, such that 
		\begin{align} 
		\sup_{n >n_0}\E\left[\mie^{-1}(\hat \Sigma_n)\bigg | \sum_{i=1}^N W_i = n\right]  < c_1& 
		&\mbox{and}& 
		&
		\sup_{n >n_0}
		\E\left[\mae(\hat \Sigma_n) / \mie^{2}(\hat \Sigma_n)\bigg | \sum_{i=1}^N W_i = n\right] < c_2,
		\end{align}
		where 
		$
			\hat \Sigma_n = \frac1n(X^1)'X^1
		$
		and $X^1$ is the matrix consisting of the features of the treated units.
	\end{enumerate}
	Then the X-learner with $\hat \mu_0$ in the first stage, OLS in the second stage, and weighting function $g\equiv 0$ has the following upper bound: for all $x \in \R^d$ and all $n> n_0$,
	\begin{equation} \label{theorem:lineartau_ptw:main}
		\E\left[\left(\tau(x) - \hat \tau_X(x)\right)^2 \bigg| \sum_{i=1}^N W_i = n\right] \le C_x \left(m^{-a} + n^{-1}\right)
	\end{equation}
	with $C_x = max(c_2 C^0, \sigma^2 d c_1) \|x\|^2$.

\end{theorem}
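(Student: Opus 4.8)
The plan is to note that with $g \equiv 0$ the X-learner collapses to its second-stage OLS fit on the treated units, $\hat\tau_X(x) = \hat\tau_1(x) = x^T\hat\beta$ with $\hat\beta = \big((X^1)^TX^1\big)^{-1}(X^1)^T\Dt^1$, where $\Dt^1_i = Y^1_i - \hat\mu_0(X^1_i)$ are the imputed effects and $X^1$ is the treated design. Writing $Y^1_i = \mu_1(X^1_i)+\varepsilon_i$ and using $\mu_1 = \mu_0 + \tau$ with $\tau(x)=x^T\beta$ (B2), each imputed effect splits as $\Dt^1_i = (X^1_i)^T\beta + \delta_i + \varepsilon_i$, where $\delta_i := \mu_0(X^1_i) - \hat\mu_0(X^1_i)$ is the first-stage estimation error and $\varepsilon_i$ is the treated noise. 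In vector form $\Dt^1 = X^1\beta + \delta + \varepsilon$, so that $\hat\beta - \beta = \hat\beta_\delta + \hat\beta_\varepsilon$, the OLS fits of $\delta$ and of $\varepsilon$ respectively, and the pointwise error becomes $\hat\tau_X(x)-\tau(x) = x^T\hat\beta_\delta + x^T\hat\beta_\varepsilon$.

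The central structural step is to condition on the treated design $X^1$, the assignment vector $W$, and the entire control subsample. Given this information $\hat\mu_0$, and hence $\hat\beta_\delta$, is fixed, while $\hat\beta_\varepsilon$ is linear in the treated noise. By the conditional-independence structure established in Theorem \ref{Theorem:CondIndi} the treated errors are independent of the control subsample, and by B4 they have conditional mean zero, so $\E[\hat\beta_\varepsilon \mid X^1, W, \text{control}] = 0$. Taking the tower expectation kills the cross term, leaving the clean variance-plus-bias decomposition
\[
\E\big[(\hat\tau_X(x)-\tau(x))^2 \,\big|\, \textstyle\sum_i W_i = n\big]
= \E\big[(x^T\hat\beta_\varepsilon)^2\big] + \E\big[(x^T\hat\beta_\delta)^2\big].
\]

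I would then bound the two terms separately. For the variance term, Cauchy--Schwarz gives $(x^T\hat\beta_\varepsilon)^2 \le \|x\|^2\|\hat\beta_\varepsilon\|^2$, and since $\E[\hat\beta_\varepsilon\mid X^1]=0$ the standard OLS identity yields $\E[\|\hat\beta_\varepsilon\|^2\mid X^1] = \trace\big(\cov(\hat\beta_\varepsilon\mid X^1)\big) \le \tfrac{\sigma^2}{n}\trace(\hat\Sigma_n^{-1}) \le \tfrac{\sigma^2 d}{n}\,\mie^{-1}(\hat\Sigma_n)$, using B4 and $\trace \le d\times(\text{top eigenvalue})$; the first bound in B5 then gives $\E[(x^T\hat\beta_\varepsilon)^2] \le \sigma^2 d\, c_1 \|x\|^2 n^{-1}$ for $n>n_0$. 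For the bias term, $(x^T\hat\beta_\delta)^2 \le \|x\|^2\|\hat\beta_\delta\|^2$ and the eigenvalue manipulation $\|\hat\beta_\delta\|^2 \le \tfrac{1}{n}\tfrac{\mae(\hat\Sigma_n)}{\mie^2(\hat\Sigma_n)}\|\delta\|^2$, obtained from $\|(X^1)^T\delta\|^2 \le n\,\mae(\hat\Sigma_n)\|\delta\|^2$ and $\mae(\hat\Sigma_n^{-2}) = \mie^{-2}(\hat\Sigma_n)$, reduce everything to controlling $\|\delta\|^2$. Conditioning on $X^1,W$ and applying B3 pointwise gives $\E[\|\delta\|^2\mid X^1,W] \le n\,C^0 m^{-a}$, and the second bound in B5 then yields $\E[(x^T\hat\beta_\delta)^2]\le c_2 C^0 \|x\|^2 m^{-a}$. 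Summing and replacing both constants by their maximum produces the claimed bound $C_x(m^{-a}+n^{-1})$ with $C_x = \max(c_2 C^0, \sigma^2 d\, c_1)\|x\|^2$.

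The main obstacle is the coupling between the random design and the first-stage error: $\hat\Sigma_n$ and $\delta$ both depend on the treated features, so the eigenvalue factors cannot be pulled out naively. Getting the order of conditioning right---condition on $X^1$ and $W$ to invoke B3 on $\|\delta\|^2$, and only afterwards integrate over the design to invoke B5---is exactly what makes the argument close, and it is what dictates the precise $\mae/\mie^2$ form of the conditioning-number assumption. A secondary point needing care is justifying that $\E[\delta_i^2\mid X^1,W]$ inherits the pointwise rate of B3 at the random evaluation point $X^1_i$, which again rests on the conditional independence of the treated and control subsamples from Theorem \ref{Theorem:CondIndi}.
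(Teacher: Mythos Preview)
Your proposal is correct and follows essentially the same route as the paper: decompose the imputed effects as $\Dt^1 = X^1\beta + \delta + \varepsilon$, split $\hat\beta-\beta$ into $\hat\beta_\delta+\hat\beta_\varepsilon$, exploit orthogonality to remove the cross term, and then bound the variance piece by $\sigma^2 d\,c_1\,n^{-1}$ via $\trace(\hat\Sigma_n^{-1})\le d\,\mie^{-1}(\hat\Sigma_n)$ and the bias piece by $c_2 C^0 m^{-a}$ via the eigenvalue manipulation and $\E[\|\delta\|^2\mid X^1,W]\le n C^0 m^{-a}$. You are in fact more explicit than the paper about why the cross term dies (the conditioning argument using Theorem~\ref{Theorem:CondIndi}) and about the order of conditioning needed to decouple the random design from the first-stage error; the paper simply states the decomposition as ``into two orthogonal error terms'' and applies B3 inside $\E[\,\cdot\mid X,E_n]$ without further comment.
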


\begin{proof}[Proof of Theorem \ref{theorem:unbalanced_ptwise}]
	To simplify the notation, we write $X$ instead of $X^1$ for the observed features of the treated units.
	Furthermore, we denote that when $g \equiv 0$ in [\ref{equ:3rdStep}] in the main paper, the X-learner is equal to $\hat \tau_1$ and we only have to analyze the performance of $\hat \tau_1$.
	
	The imputed treatment effects for the treatment group can be written as
	$$
	D_i^1 = Y_i - \hat \mu_0(X_i) =  X_i\beta + \delta_i + \epsilon_i,
	$$
	with $\delta_i = \mu_0(X_i) - \hat \mu_0(X_i)$.
	In the second stage we estimate $\beta$ using an OLS estimator,
	$$
	\hat \beta = (X'X)^{-1} X' D^1.
	$$
	To simplify the notation, we define the event of observering $n$ treated units as $E_n = \{\sum_{i=1}^N W_i = n\}$.
	We decompose the MSE of $\hat \tau(x)$ into two orthogonal error terms:
	\begin{equation}
	\label{equ:MSE_xdecomp}
	\begin{aligned}
	\E\left[(\tau(x) - \hat \tau_X(x))^2\bigg | \sum_{i=1}^N W_i = n\right] 
	= 
	\E\left[(x' (\beta - \hat \beta))^2    \Big | E_n\right]
	&\le
	\|x\|^2 \E \left[
	\| (X' X)^{-1} X' \delta\|^2                
	+
	\| (X' X)^{-1} X' \varepsilon\|^2                
	\Big |  E_n\right]
	.
	\end{aligned}
	\end{equation}
	Throughout the proof, we assume that $n > n_0$ such assumption \ref{theorem:lineartau_ptw:goodEV} can be used.
	We will show that the second term decreases at the parametric rate, $n^{-1}$, while the first term decreases at a rate of $m^{-a}$:
	\begin{equation}\label{equ:bounding_epsilon}
	\begin{aligned}
	\E \left[\|(X' X)^{-1} X' \varepsilon\|^2\Big |  E_n\right]
	&= 
	\E\left[
	tr\left( X (X' X)^{-1} (X' X)^{-1} X' \E\left[\varepsilon \varepsilon'\big |X, E_n\right]\right)                
	\Big |  E_n\right]            
	\\&\le 
	\sigma^2 
	d
	\E\left[
	\mie^{-1}(\hat \Sigma_n)
	\Big |  E_n\right]
	n^{-1}
	\\&\le 
	\sigma^2 d c_1 n^{-1}.
	\end{aligned}
	\end{equation}
	For the last inequality we used assumption \ref{theorem:lineartau_ptw:goodEV}.
	Next, we are concerned with bounding the error coming from not perfectly predicting $\mu_0$:
	\begin{equation}\label{equ:bounding_delta}
	\begin{aligned}
	\E\left[   \|(X' X)^{-1} X' \delta\|_2^2 \Big |  E_n\right]   
	&\le
	\E\left[   \mae(\hat \Sigma_n)/ \mie^{2}(\hat \Sigma_n) \|\delta\|_2^2    \Big |  E_n\right]    n^{-1}
	\\&\le
	\E\left[\mae(\hat \Sigma_n)/ \mie^{2}(\hat \Sigma_n)\Big |  E_n\right]    C^0 m^{-a}
	\\&\le 
	c_2 C^0 m^{-a}.
	\end{aligned}
	\end{equation}
	Here we used that 
	$
	\mae(\hat \Sigma_n^{-2}) = \mie^{-2}(\hat \Sigma_n)
	$,
	and
	$
	\E\left[
	\|\delta\|_2^2
	\Big| X, E_n\right]
	= 
	\E\left[
	\sum_{i = 1}^{n}  
	\delta^2(X_i)
	\Big| X, E_n\right]
	\le n C^0 m^{-a}
	$. 
	For the last statement, we used assumption \ref{theorem:lineartau_ptw:goodEV}. 
	This leads to [\ref{theorem:lineartau_ptw:main}].
\end{proof}


\subsubsection*{Bounding the EMSE}

\begin{proof}[Proof of Theorem \ref{theorem:lineartau}]
	This proof is very similar to the proof of Theorem \ref{theorem:unbalanced_ptwise}. The difference is that here we bound the EMSE instead of the pointwise MSE, and we have a somewhat weaker assumption, because $\hat \mu_0$ only satisfies that its EMSE converges at a rate of $a$, but not necessarily the MSE at every $x$. 
	We introduce $\X$ here to be a random variable with the same distribution as the feature distribution such that the EMSE can be written as $\E[(\tau(\X) - \hat \tau_X(\X))^2|E_n]$. Recall that we use the notation that $E_n$ is the event that we observe exactly $n$ treated units and $m = N - n$ control units:
	$$
		E_n =\left\{\sum_{i = 1}^N W_i = n \right\}.
	$$
	We start with a similar decomposition as in [\ref{equ:MSE_xdecomp}]:
	\begin{equation} \label{equ:decomp}
		\begin{aligned}
			\E\left[(\tau(\X) - \hat \tau_X(\X))^2 \big | E_n \right] 
			&\le
			\E\left[ \|\X\|^2  \right]  
			\E\left[ \|\beta - \hat \beta\|^2 \big | E_n \right] 
			\\&=
			\E\left[ \|\X\|^2\right]  
			\E \left[\| (X' X)^{-1} X' \delta\|^2                
			+
			\| (X' X)^{-1} X' \varepsilon\|^2                
			\big | E_n \right].
		\end{aligned}
	\end{equation}
	Following exactly the same steps as in [\ref{equ:bounding_epsilon}], we get
	$$
		\E \left[\|(X' X)^{-1} X' \varepsilon\|^2\big | E_n \right]  \le \sigma^2 d C_\Sigma n^{-1}.
	$$
	Bounding 
	$
			\E\left[    \|(X' X)^{-1} X' \delta\|_2^2 \big | E_n \right] 
	$ 
	is now slightly different than in [\ref{equ:bounding_delta}]:
	\begin{equation} \label{eq:boundingdelta}
		\begin{aligned}
		\E\left[    \|(X' X)^{-1} X' \delta\|_2^2 \big | E_n \right] 
		&\le
		\E\left[ \mie^{-1}(X'X) \|X(X' X)^{-1} X' \delta\|_2^2 \big | E_n \right]             
		\\&\le
		\E\left[ \mie^{-1}(X'X) \|\delta\|_2^2 \big | E_n \right]         
		\\&\le
		\E\left[ \mie^{-1}(\Sigma_n) \frac1n \|\delta\|_2^2 \big | E_n \right]                     
        \\&\le
	    C_\Sigma \E\left[ \|\delta_1\|_2^2 \big | E_n \right].       
		\end{aligned}
	\end{equation}
	Here the last inequality follows from Condition \ref{theorem:lineartau:goodEV}.
	
	We now apply \refb{Ineq:W1}, \refb{Ineq:W0}, and Condition \ref{condition:controlrate} to conclude that
	\begin{align*}
		\E\left[ \|\delta_1\|_2^2 \big | E_n \right] 
		&= 
		\E\left[ \|\mu_0(X_1) - \hat\mu_0(X_1)\|_2^2 \big | E_n, W_1 = 1\right] 
		\\&\le 
		\frac{\emax - \emax\emin}{\emin - \emax\emin} \E\left[ \|\mu_0(X_1) - \hat\mu_0(X_1)\|_2^2 \big | E_n, W_1 = 0\right]
		\\& \le 
		\frac{\emax - \emax\emin}{\emin - \emax\emin} C_0 m^{-a_\mu}.
	\end{align*}
	
	Lastly, we use the assumption that $\E\left[\|\X\|^2\big | E_n \right]  \le C_\X$ and conclude that
	\begin{equation}\label{emse:rate}
		\E\left[(\tau(\X) - \hat \tau_X(\X))^2\big | E_n \right]  
		\le
		C_\X \left(\frac{\emax - \emax\emin}{\emin - \emax\emin} C_\Sigma C_0 m^{-a} + \sigma^2 d C_\Sigma n^{-1}\right).
	\end{equation}

\end{proof}


\subsection{Achieving the parametric rate}
	When there are a lot of control units, such that $m \geq n^{1/a}$, then we have seen that the X-learner achieves the parametric rate. 
	However, in some situations the X-learner also achieves the parametric rate even if the number of control units is of the same order as the number of treated units.
	To illustrate this, we consider an example in which the conditional average treatment effect and the response functions depend on disjoint and independent subsets of the features. 
	
	Specifically, we assume that we observe $m$ control units and $n$ treated units according to Model \ref{model:basic}. 
	We assume the same setup and the same conditions as in Theorem \ref{theorem:lineartau}.
	In particular, we assume that there exists an estimator $\hat \mu^m_0$ that depends only on the control observations and estimates the control response function at a rate of at most $m^{-a}$.
	In addition to these conditions we also assume the following independence condition.
	\setcounter{condition}{6}
	\begin{condition} \label{Cond:Ssparse}
		There exists subsets, $S, \Sb \subset \{1, \ldots, d\}$ with $S \cap \Sb = \emptyset$, such that
		\begin{itemize}
			\item $(X_i)_{i \in S}$ and $(X_i)_{i\in\Sb}$ are independent.
			\item For all $i \in S$, $E[X_i | W_i = 1] = 0$.
			\item There exist a function $\tilde \mu_0$, and a vector $\beta$ with $\mu_0(x) = \tilde \mu_0(x_\Sb)$ and $\tau(x) = x_S^T \tilde \beta$.
		\end{itemize}
	\end{condition}
	
	For technical reasons, we also need bounds on the fourth moments of the feature vector and the error of the estimator for the control response. 
	\begin{condition} \label{Cond:boundSigma}
		The fourth moments of the feature vector $X$ are bounded:
		$$
		\E[\|X\|_2^4| W = 1] \le C_X.
		$$
	\end{condition}
	
	\begin{condition} \label{Cond:boundeddelta}
		There exists an $m_0$ such that for all $m > m_0$, 
		$$\E\left[(\mu_0(X) - \hat \mu^m_0(X))^4 \bigg| W = 1 \right] \le C_\delta.$$
		Here $\hat \mu^m_0$ is defined as in Condition \ref{condition:controlrate}.
	\end{condition}	
	This condition is satisfied, for example, when $\mu_0$ is bounded. 
	
	Under these additional assumptions, the EMSE of the X-learner achieves the parametric rate in $n$, given that $m > m_0$.
	\begin{theorem} \label{Theo:Example_semipara}
		Assume that Conditions \ref{equation:conditionalUnconf}--\ref{Cond:boundeddelta} hold.
		Then the X-learner with $\hat \mu^m_0$ in the first stage and OLS in the second stage achieves the parametric rate in $n$. That is, there exists a constant $C$ such that for all $m > m_0$ and $n > 1$, 
		$$
		\E\left[(\tau(\X) - \hat \tau_X^{mn}(\X))^2 \Big | \sum_i W_i = n \right]  \le C n^{-1}.
		$$
	\end{theorem}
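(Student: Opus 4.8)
The plan is to reduce everything, via the disjoint-support structure of Condition \ref{Cond:Ssparse}, to a second-stage OLS problem in which the first-stage error contributes only at the parametric rate. With the weighting function $g\equiv 0$ in \eqref{equ:3rdStep}, the X-learner equals its treatment-side estimator $\hat\tau_1$, so I only need to analyze the OLS fit of the imputed effects $D_i^1=Y_i-\hat\mu_0^m(X_i)$ on the treated features relevant to $\tau$. Writing $\mu_0(x)=\tilde\mu_0(x_{\Sb})$ and $\tau(x)=x_S^T\tilde\beta$, these imputed effects decompose exactly as in the proof of Theorem \ref{theorem:unbalanced_ptwise}, namely $D_i^1=X_{i,S}^T\tilde\beta+\delta_i+\varepsilon_i$ with $\delta_i=\mu_0(X_i)-\hat\mu_0^m(X_i)$. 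Regressing on the $S$-block alone gives $\hat{\tilde\beta}-\tilde\beta=(X_S^TX_S)^{-1}X_S^T(\delta+\varepsilon)$, and since the EMSE is bounded by $\E[\|\X_S\|^2]\,\E[\|\hat{\tilde\beta}-\tilde\beta\|^2\mid E_n]$ with $E_n=\{\sum_i W_i=n\}$, the whole theorem reduces to showing $\E[\|\hat{\tilde\beta}-\tilde\beta\|^2\mid E_n]=O(n^{-1})$.

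Exactly as in Theorems \ref{theorem:unbalanced_ptwise} and \ref{theorem:lineartau}, I would first condition on the treatment-assignment vector $W$ and on the control sample, so that Theorem \ref{Theorem:CondIndi} makes the treated covariates i.i.d. from $\Pcal_1$ while $\hat\mu_0^m$ becomes a fixed function independent of them; Lemma \ref{Lemma:DistChange} then trades the conditioning on $E_n$ for a bounded propensity factor. The noise term $(X_S^TX_S)^{-1}X_S^T\varepsilon$ is handled verbatim as in \eqref{equ:bounding_epsilon}: the well-conditioned-eigenvalue assumption \ref{theorem:lineartau:goodEV} together with $\var[\varepsilon_i\mid X]\le\sigma^2$ yields a contribution of order $\sigma^2 d\,C_\Sigma\,n^{-1}$.

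The heart of the argument is the first-stage-error term $(X_S^TX_S)^{-1}X_S^T\delta$. Setting $A_n=\tfrac1n X_S^TX_S$ and $b_n=\tfrac1n\sum_i X_{i,S}\delta_i$, this term is $A_n^{-1}b_n$, with $\|A_n^{-1}b_n\|^2\le\mie^{-2}(A_n)\,\|b_n\|^2$. In Theorem \ref{theorem:lineartau} the analogue was only $O(m^{-a})$ because $\|b_n\|$ inherited the size of $\delta$. Here, conditioning on the control sample makes the summands $X_{i,S}\delta_i$ i.i.d. with mean $\E[X_{i,S}\delta_i\mid W=1]$, and this mean vanishes \emph{exactly}: $\delta_i$ is a function of $X_{i,\Sb}$ only, $X_{i,S}$ is independent of $X_{i,\Sb}$, and $\E[X_{i,S}\mid W=1]=0$ by Condition \ref{Cond:Ssparse}. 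Hence $b_n$ is a centered empirical mean, so $\E\|b_n\|^2=\tfrac1n\var(X_S\delta)$, and Cauchy--Schwarz with the fourth-moment bounds $\E[\|X\|^4\mid W=1]\le C_X$ (Condition \ref{Cond:boundSigma}) and $\E[\delta^4\mid W=1]\le C_\delta$ (Condition \ref{Cond:boundeddelta}) keeps $\var(X_S\delta)$ finite. Combining this $O(n^{-1})$ bound with the eigenvalue control of \ref{theorem:lineartau:goodEV} gives $\E\|A_n^{-1}b_n\|^2=O(n^{-1})$, and adding the noise term finishes the proof.

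The main obstacle is precisely this cancellation step, and it needs care on two fronts. First, the improvement from $m^{-a}$ to $n^{-1}$ depends on $\delta_i$ genuinely being a function of $X_{i,\Sb}$ alone, which is what allows the independence and zero-mean parts of Condition \ref{Cond:Ssparse} to force $\E[X_{i,S}\delta_i\mid W=1]=0$; without that the cross-moment is only $O(m^{-a/2})$ by Cauchy--Schwarz and the fast rate is lost. Second, $A_n$ and $b_n$ both depend on the treated $X_S$ and are therefore correlated, so the passage $\E[\mie^{-2}(A_n)\|b_n\|^2]\le C\,\E\|b_n\|^2$ must be justified through the uniform eigenvalue control of \ref{theorem:lineartau:goodEV}, or by a Cauchy--Schwarz split across the two factors using the fourth-moment bounds, rather than by any naive independence. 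These are exactly the roles played by the additional moment conditions \ref{Cond:boundSigma}--\ref{Cond:boundeddelta}, which were not needed in Theorem \ref{theorem:lineartau}.
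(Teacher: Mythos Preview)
Your argument is correct and rests on the same pivotal cancellation the paper exploits---that, after conditioning on the control sample, $\E[X_{i,S}\,\delta_i\mid W_i=1]=0$ because $\delta_i$ depends only on $X_{i,\Sb}$ and the control data, $X_{i,S}\perp X_{i,\Sb}$, and $\E[X_{i,S}\mid W_i=1]=0$ by Condition~\ref{Cond:Ssparse}. The difference lies in how this cancellation is turned into the $O(n^{-1})$ bound for the first-stage-error term. You observe directly that $b_n=\tfrac1n\sum_i X_{i,S}\delta_i$ is an average of centered i.i.d.\ summands, so $\E\|b_n\|^2=n^{-1}\E\|X_{1,S}\delta_1\|^2$, and Cauchy--Schwarz with Conditions~\ref{Cond:boundSigma}--\ref{Cond:boundeddelta} bounds the second factor; the almost-sure eigenvalue control of Condition~\ref{theorem:lineartau:goodEV} (which transfers to the $S$-principal submatrix by Cauchy interlacing) then decouples $A_n^{-1}$ from $b_n$. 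The paper instead proves a separate tail estimate (Lemma~\ref{Lemma:bounding}), $\P\bigl(n\|(X'X)^{-1}X'\delta\|^2\ge s\mid E_n\bigr)\le C s^{-2}$, by bounding $\var\bigl(\|X'\delta\|^2\mid\delta\bigr)$ via the Efron--Stein inequality and then applying Chebyshev; the $O(n^{-1})$ expectation is recovered by integrating the tail. Your route is more elementary and transparent; the paper's route is more circuitous but delivers tail information as a by-product, and the Efron--Stein expansion makes visible why fourth moments are required. Both proofs implicitly need $\hat\mu_0^m$ to depend on $X_{\Sb}$ only so that $\delta_i$ is genuinely a function of $X_{i,\Sb}$; you are right to flag this, and your explicit restriction of the second-stage OLS to the $S$-block---rather than the full feature vector the paper uses in its decomposition---keeps the argument cleanly aligned with that structural hypothesis.
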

	We will prove the following lemma first, because it will be useful for the proof of Theorem \ref{Theo:Example_semipara}.
	\begin{lemma} \label{Lemma:bounding}
		Under the assmuption of Theorem \ref{Theo:Example_semipara}, there exists a constant C such that for all $n > n_0$, $m > m_0$, and $s > 0$,
		$$
		\P\left(n\|({X^1}' X^1)^{-1} {X^1}' \delta\|_2^2  \ge s \Big | \sum_i W_i = n \right) \le C\frac{1}{s^2},
		$$
		where $\delta_i = \mu_0(X^1_i) - \hat \mu^m_0(X^1_i)$.
	\end{lemma}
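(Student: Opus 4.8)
The plan is to turn the claimed tail bound into a second--moment estimate and then to use the independence/sparsity structure of Condition \ref{Cond:Ssparse} to create the cancellation that upgrades a crude $\mathcal{O}(1)$ bound into the $\mathcal{O}(1/n)$ behaviour recorded in the lemma. Abbreviate $\hat\gamma := ({X^1}'X^1)^{-1}{X^1}'\delta$ and observe that
\[
\hat\gamma = \hat\Sigma_n^{-1}\bar V,
\qquad
\bar V := \frac1n\sum_{i=1}^n X^1_i\,\delta_i,
\qquad
\hat\Sigma_n = \frac1n (X^1)'X^1 .
\]
Setting $Z := n\|\hat\gamma\|_2^2 \ge 0$, Markov's inequality applied to $Z^2$ gives $\P(Z \ge s \mid E_n) \le \E[Z^2\mid E_n]/s^2$, so it suffices to bound $\E[Z^2\mid E_n]$ by a constant uniform over $m>m_0$ and $n>n_0$. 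Using the well--conditioning assumption \ref{theorem:lineartau:goodEV} (so that $\mie^{-1}(\hat\Sigma_n)\le C_\Sigma$ for $n>n_0$),
\[
Z^2 = n^2\|\hat\Sigma_n^{-1}\bar V\|_2^4 \le n^2\,\mie^{-4}(\hat\Sigma_n)\,\|\bar V\|_2^4 \le C_\Sigma^4\, n^2\,\|\bar V\|_2^4,
\]
which reduces everything to proving $n^2\,\E[\|\bar V\|_2^4\mid E_n] \le C$, i.e.\ that $\bar V$ concentrates at the rate $\E\|\bar V\|_2^4 = \mathcal{O}(n^{-2})$.

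To make $\bar V$ tractable I would first strip away the dependence induced by conditioning on $E_n$. Conditioning in addition on the full treatment vector $W$ and on the control subsample (which alone determines $\hat\mu_0^m$), Theorem \ref{Theorem:CondIndi} shows that the treated feature vectors $X^1_1,\dots,X^1_n$ are i.i.d.\ from $\Pcal_1$ while $\hat\mu_0^m$ is frozen; hence the summands $Z_i := X^1_i\,\delta_i$, with $\delta_i=(\mu_0-\hat\mu_0^m)(X^1_i)$, are i.i.d.\ given $(W,\text{control data})$. The decisive point is that these summands are \emph{centered}. This is exactly what Condition \ref{Cond:Ssparse} buys: writing $X=(X_S,X_{\Sb})$, the block $X_S$ is mean zero and independent of $X_{\Sb}$, whereas $\mu_0(x)=\tilde\mu_0(x_{\Sb})$ and hence the residual $\delta$ live on $X_{\Sb}$, so that the $S$--block of $\E[X\delta\mid W,\text{control data}]$—the block carrying the treatment effect $\tau(x)=x_S^T\tilde\beta$—factors as $\E[X_S]\,\E[\delta]=0$. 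Establishing $\E[X_i\delta_i\mid W,\text{control data}]=0$ makes $\bar V$ an average of $n$ i.i.d.\ centered vectors, and this centering is the sole reason $\bar V$ is $\mathcal{O}(n^{-1/2})$ rather than $\mathcal{O}(1)$.

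Given centering, I would bound $\E[\|\bar V\|_2^4\mid W,\text{control data}]$ by expanding
\[
\E\|\bar V\|_2^4 = \frac1{n^4}\sum_{i,j,k,l}\E\big[(Z_i^\top Z_j)(Z_k^\top Z_l)\big],
\]
where every index appearing exactly once forces a factor $\E[Z]=0$ and kills the term. Only the $\mathcal{O}(n^2)$ fully paired terms and the $\mathcal{O}(n)$ coincident terms survive, yielding $\E\|\bar V\|_2^4 \le C' n^{-2}(\E\|Z\|_2^2)^2 + C'' n^{-3}\,\E\|Z\|_2^4$. The second moment is finite and uniformly controlled by Cauchy--Schwarz together with Conditions \ref{Cond:boundSigma} and \ref{Cond:boundeddelta}, namely $\E\|Z\|_2^2 = \E[\|X\|_2^2\delta^2]\le \sqrt{\E\|X\|_2^4}\,\sqrt{\E\delta^4}\le\sqrt{C_X C_\delta}$, and the fourth--moment term enters only at the lower order $n^{-3}$. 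This gives $\E\|\bar V\|_2^4=\mathcal{O}(n^{-2})$ uniformly in $m>m_0$; averaging the resulting bound back over $W$ and the control data (all constants being uniform in the conditioning) yields $\E[Z^2\mid E_n]\le C$, and the lemma follows from the Markov step above.

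I expect the centering identity $\E[X_i\delta_i\mid W,\text{control data}]=0$ to be the main obstacle, since it is what separates the useful $\mathcal{O}(1)$ second--moment bound from the naive projection estimate $n\|\hat\gamma\|_2^2\le \|\delta\|_2^2/\mie(\hat\Sigma_n)=\mathcal{O}(n\,m^{-a})$, which is far too weak. Making this rigorous requires care in propagating the independence of $X_S$ and $X_{\Sb}$ and the mean--zero property through the (nonlinear) residual $\delta$ built from an externally trained $\hat\mu_0^m$; the remaining work is the moment bookkeeping, which is routine but must be carried out using only the fourth--moment Conditions \ref{Cond:boundSigma} and \ref{Cond:boundeddelta} rather than higher moments.
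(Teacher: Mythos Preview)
Your plan is close in spirit to the paper's, but the machinery differs: you condition on $(W,\text{control data})$ and expand $\E\|\bar V\|_2^4$ directly, whereas the paper conditions on $\delta$ and bounds $\var(\|X'\delta\|_2^2\mid E_n,\delta)$ via the Efron--Stein inequality, obtaining a bound of order $n^2\,\E[\delta_i^4]$ after the cross terms cancel.

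The gap you flag as the ``main obstacle'' is real and is not resolved by your argument. You establish only that the $S$-block of $\E[X_i\delta_i\mid W,\text{control}]$ vanishes (via $X_S\perp X_{\bar S}$, $\E[X_S\mid W{=}1]=0$, and the assumption that $\delta$ lives on $X_{\bar S}$), and then slide to the full-vector identity $\E[X_i\delta_i]=0$. That step is unjustified: the $\bar S$-block $\E[X_{i,\bar S}\,\delta_i]$ involves two functions of the same $X_{i,\bar S}$ and need not vanish. Without full centering, the expansion of $\E\|\bar V\|_2^4$ retains order-one terms from the uncentered $\bar S$-mean, so $n^2\,\E\|\bar V\|_2^4$ is not bounded and Markov on $Z^2$ gives nothing.

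The paper's device of conditioning on $\delta$ is exactly what isolates the $S$-coordinate randomness: once $\delta$ is fixed, the relevant fluctuations come from the $S$-coordinates, which by Condition~\ref{Cond:Ssparse} are independent of $\delta$ and mean zero under $W{=}1$, and the Efron--Stein cancellation then invokes $\E[X_j\mid W,E_n,\delta]=0$ for $j\neq i$ at precisely this point. If you want to salvage the direct fourth-moment route, the natural fix is to condition additionally on $X^1_{\bar S}$ (so that $\delta$ is frozen and only $X^1_S$ remains random), carry out your paired-index expansion on the $S$-block of $\bar V$, and handle the now-deterministic $\bar S$-contribution to $\hat\gamma$ separately.
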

	
	\begin{proof}[Proof of Lemma \ref{Lemma:bounding}]
		To simplify the notation, we write $X$ instead of $X^1$ for the feature matrix of the treated units, and we define the event of observing exactly $n$ treated units as
		$$
		E_n = \left\{\sum_{i = 1}^n W_i = n\right\}.
		$$
		
		We use Condition \ref{theorem:lineartau:goodEV} and then Chebyshev's inequality to conclude that for all $n > n_0$ ($n_0$ is determined by Condition \ref{theorem:lineartau:goodEV}),
		\begin{align*}
		\P\left(n \|(X' X)^{-1} X' \delta\|_2^2  \ge s \Big| E_n \right) 
		&=
		\P\left(\frac1n \|\Sigma_n^{-1} X' \delta\|_2^2 \ge s\Big| E_n \right) 
		\\&\le 
		\P\left(\frac1n \gamma^{-2}_{\min}(\Sigma_n)  \| X' \delta\|_2^2 \ge s\Big| E_n \right) 
		\\& \le
		\E\left[\P\left(\frac1n C_\Sigma^2  \| X' \delta\|_2^2 \ge s\Big| E_n, \delta \right) \Big| E_n\right]
		\\&\le     
		\E\left[        
		\frac{C_\Sigma^4}{s^2 n^2}
		\var\left(\| X' \delta\|_2^2\Big| E_n, \delta \right) 
		\Big| E_n\right].
		\end{align*}
		
		Next we apply the Efron–Stein inequality to bound the variance term:
		$$
		\var\left(\| X' \delta\|_2^2\Big| E_n, \delta \right) 
		\le 
		\frac{1}{2} \sum_{i=1}^{n} \E\left[(f(X)-f(X^{(i)}))^2\Big| E_n, \delta \right].
		$$
		Here $f(x) = \|x' \delta\|_2^2 $, 
		$X^{(i)} = (X_1, \ldots, X_{i-1}, \tilde X_i, X_{i +1}, \ldots, X_n)$,
		and $\tilde X$ is an independent copy of $X$. 
		
		Let us now bound the summands:
		\begin{align*}
		&\E\left[(f(X)-f(X^{(i)}))^2\Big| E_n, \delta \right]
		\\=
		&\E\left[\left(\| X' \delta\|_2^2 - \| X' \delta - (X_i - \tilde X_i) \delta_i\|_2^2 \right)^2\Big| E_n, \delta \right]
		\\=
		&\E\bigg[
		\underbrace{\left(2 \delta' X (X_i - \tilde X_i) \delta_i\right)^2}_{A}
		+ 
		\underbrace{\|(X_i - \tilde X_i) \delta_i\|_2^4}_{B}
		- 
		\underbrace{4 \delta' X (X_i - \tilde X_i) \delta_i \|(X_i - \tilde X_i) \delta_i\|_2^2}_C 
		\Big| E_n, \delta \bigg].
		\end{align*}
		
		Let us first bound $\E[A|E_n, \delta]$:
		\begin{align*}
		\E\left[
		\left(2 \delta' X (X_i - \tilde X_i) \delta_i\right)^2
		\Big| E_n, \delta \right]
		&=
		\E\left[
		4 \sum_{j,k=1}^n 
		\delta_j X'_j (X_i - \tilde X_i) \delta_i
		\delta_k X'_k (X_i - \tilde X_i) \delta_i
		\Big| E_n, \delta \right]    
		\\& \overset{(a)}=
		\E\left[
		4 \sum_{j=1}^n 
		(\delta_j X'_j (X_i - \tilde X_i) \delta_i)^2
		\Big| E_n, \delta \right]    
		\\& \le
		4 \delta_i^4
		(n-1)
		\E\left[ 
		(X'_1 (X_2 - \tilde X_2))^2
		\Big| E_n, \delta 
		\right]    
		+
		4 \delta_i^4
		\E\left[ 
		(X'_1 (X_1 - \tilde X_1))^2
		\Big| E_n, \delta 
		\right]
		\\&\le C_A \delta_i^4 n.
		\end{align*}
		Here 
		$$C_A = 4 \max\left(    \E\left[
		(X'_1 (X_2- \tilde X_2))^2
		\Big| E_n
		\right], \E\left[
		(X'_1 (X_1  - \tilde X_1))^2
		\Big| E_n
		\right]
		\right),
		$$
		which is bounded by Condition \ref{Cond:boundSigma}.
		For equation $(a)$ we used that for $k \neq j$; therefore, we have that either $k$ or $j$ is not equal to $i$. Without loss of generality let $j \neq i$. Then
		\begin{equation}\label{eq:E[X|W=1]=0}
		\begin{aligned}
		E&\left[
		\delta_j X'_j (X_i - \tilde X_i) \delta_i
		\delta_k X'_k (X_i - \tilde X_i) \delta_i
		\Big| E_n, \delta 
		\right]        
		\\&=
		\delta_j
		\E\left[
		\E\left[
		X'_j 
		\Big| W, E_n, \delta 
		\right]    
		\E\left[
		(X_i - \tilde X_i) \delta_i
		\delta_k X'_k (X_i - \tilde X_i) \delta_i
		\Big| W, E_n, \delta 
		\right]    
		\Big| E_n, \delta 
		\right]        
		\\& =0,
		\end{aligned}
		\end{equation}
		because $ \E\left[ X'_j | W, E_n, \delta \right] = 0$ as per the assumption. 
		
		In order to bound $\E[B|E_n, \delta]$, note that all the fourth moments of $X$ are bounded and thus
		\begin{align*}
		\E\left[
		\|(X_i - \tilde X_i) \delta_i\|_2^4
		\Big| E_n, \delta \right]
		&\le
		C_B    
		\delta_i^4.
		\end{align*}
		
		Finally, we bound $\E[C|E_n, \delta]$:
		\begin{align*}
		\E \left[
		4 \delta' X (X_i - \tilde X_i) \delta_i \|(X_i - \tilde X_i) \delta_i\|_2^2 
		\Big| E_n, \delta 
		\right]
		& = 
		\E \left[
		\sum_{j=1}^n  \delta_j X'_{j} (X_i - \tilde X_i) \delta_i \|(X_i - \tilde X_i) \delta_i\|_2^2 
		\Big| E_n, \delta 
		\right]
		\\& = 
		\E \left[
		\delta_i^4 X'_{i} (X_i - \tilde X_i)  \|X_i - \tilde X_i\|_2^2 
		\Big| E_n, \delta 
		\Big| E_n, \delta 
		\right]
		\\&= C_C \delta_i^4,
		\end{align*}
		where the second equality follows from the same argument as in \refb{eq:E[X|W=1]=0}, and the last equality is implied by Condition \ref{Cond:boundSigma}.
		
		Plugging in terms A, B, and C, we have that for all $n > n_0$,
		$$
		\var\left(\| X' \delta\|_2^2 \Big| E_n, \delta \right) 
		\le 
		\frac{1}{2} \sum_{i=1}^{n} E[(f(X, \delta)-f(X^{(i)}, \delta^{(i)}))^2]
		\le C \delta^4 n^2,
		$$
		with $C = C_A + C_B + C_C$.    
		Thus for $n > n_0$,
		\begin{align*}
		\P\left(n \|(X' X)^{-1} X' \delta\|_2^2  \ge s \Big| E_n \right) 
		&\le
		\E\left[        
		\frac{C C_\Sigma^4}{s^2 }
		\delta^4
		\Big| E_n\right]
		\le 
		C C_\Sigma^4 C_\delta \frac{1}{s^2 }.
		\end{align*}
	\end{proof}
	
	\begin{proof}[Proof of Theorem \ref{Theo:Example_semipara}]
		We start with the same decomposition as in \refb{equ:decomp}:
		\begin{align*}
		\E\left[(\tau(\X) - \hat \tau_X^{mn}(\X))^2 \big | E_n \right] 
		&\le
		\E\left[ \|\X\|^2\right]  
		\E \left[\| (X' X)^{-1} X' \delta\|^2                
		+
		\| (X' X)^{-1} X' \varepsilon\|^2                
		\big | E_n \right],
		\end{align*}
		and we follow the same steps to conclude that 
		\begin{align*}
		&\E \left[\|(X' X)^{-1} X' \varepsilon\|^2\big | E_n \right]  \le \sigma^2 d C_\Sigma n^{-1}&
		&\mbox{and}&
		&\E\left[ \|\X\|^2\right] \le C_\X.&
		\end{align*}
		
		
		From Lemma \ref{Lemma:bounding}, we can conclude that there exists a constant $C$ such that
		\begin{align*}
		\lim_{n \rightarrow \infty}
		\E\left[n \|(X' X)^{-1} X' \delta\|_2^2 \big | E_n \right] 
		&=
		\lim_{n \rightarrow \infty, n > n_0} 
		\int_{0}^\infty
		\P\left(n \|(X' X)^{-1} X' \delta\|_2^2  \ge s \Big| E_n \right) 
		ds
		\\&\le
		\lim_{n \rightarrow \infty, n > n_0} 
		\int_{0}^\infty
		\max(1, C\frac{1}{s^2 })
		ds
		\\&\le
		1 + C.
		\end{align*}
		Thus there exists a $\tilde C$ such that for all $n > 1$,
		$$
		\E\left[\|(X' X)^{-1} X' \delta\|_2^2 \big | E_n \right] \le \tilde C n^{-1}.
		$$
	\end{proof}

\subsection{EMSE convergence rate for Lipschitz continuous response functions} \label{section:balanced}

In Section \ref{section:conv:linear}, we considered an example where the distribution of $(Y(0), Y(1), W, X)$ was assumed to be in some family $F \in S(a_\mu, a_\tau)$ with $a_\tau > a_\mu$, and we showed that one can expect the X-learner to outperform the T-learner in this case. 
Now we want to explore the case where $a_\tau \le a_\mu$. 

Let us first consider the case, where $a_\tau < a_\mu$. This is a somewhat artificial case, since having response functions that can be estimated at a rate of $N^{-a_\mu}$  implies that the CATE cannot be too complicated. For example, if $\mu_0$ and $\mu_1$ are Lipschitz continuous, then the CATE is Lipschitz continuous as well, and we would expect $a_\tau \approx a_\mu$. 
Even though it is hard to construct a case with $a_\tau < a_\mu$, we cannot exclude such a situation, and we would expect that in such a case the T-learner performs better than the X-learner.

We therefore believe that the case where $a_\tau \approx a_\mu$ is a more reasonable assumption than the case where $a_\tau < a_\mu$. In particular, we would expect the T- and X-learners to perform similarly when compared to their worst-case convergence rate.
Let us try to back up this intuition with a specific example.
Theorem \ref{theorem:lineartau} already confirms that $\hat \tau_1$ achieves the expected rate, 
$$ 
\mathcal{O}\left( m^{-a_\mu} +n^{-a_\tau} \right),
$$ 
for the case where the CATE is linear. 
Below, we consider another example, where the CATE is of the same order as the response functions. We assume some noise level $\sigma$ that is fixed, and we start by introducing a family $F^L$ of distributions with Lipschitz continuous regression functions.
\begin{definition}[Lipschitz continuous regression functions] \label{def:lipschitzRegression}
	Let $F^{L}$ be the class of distributions on
	$(X, Y) \in [0,1]^{d} \times \R$ such that:
	\begin{enumerate}
		\item
		The features, $X_i$, are i.i.d. uniformly distributed in $[0,1]^d$.
		\item The observed outcomes are given by
		$$
		Y_i = \mu(X_i) + \varepsilon_i,
		$$ 
		where the $\varepsilon_i$ is independent and normally distributed with mean 0 and variance $\sigma^2$.
		\item 
		$X_i$ and $\varepsilon_i$ are independent.
		\item
		The regression function
		$\mu$ is Lipschitz continuous with parameter $L$.
	\end{enumerate}
\end{definition}

\begin{remark}
	The optimal rate of convergence for the regression problem of estimating $x\mapsto \E[Y|X=x]$ in Definition \ref{def:lipschitzRegression} is $N^{-2/(2+d)}$. Furthermore, the KNN algorithm with the right choice of the number of  neighbors and the Nadaraya--Watson estimator with the right kernels achieve this rate, and they are thus minimax optimal for this regression problem.
\end{remark}
Now let's define a related distribution on $(Y(0), Y(1), W, X)$.
\begin{definition} \label{definition:Dfam}
	Let $\mathcal{D}^{L}_{mn}$ be the family of distributions of 
	$(Y(0), Y(1), W, X) \in \R ^ N \times \R ^ N \times \{0,1\} ^ N\times [0,1]^{d \times N}$ such that:
	\begin{enumerate}
		\item 
		$N = m + n$.
		\item
		The features, 
		$X_i$, are i.i.d. uniformly distributed in $[0,1]^d$.
		\item There are exactly $n$ treated units,
		$$\sum_i W_i = n.$$
		\item The observed outcomes are given by
		$$
		Y_i(w) = \mu_w(X_i) + \varepsilon_{wi},
		$$ 
		where $(\varepsilon_{0i}, \varepsilon_{1i})$ is independent normally distributed with mean 0 and marginal variances $\sigma^2$.\footnote{We do not assume that $\varepsilon_{0i} \perp \varepsilon_{1i}$.}
		\item 
		$X, W$ and $\varepsilon = (\varepsilon_{0i}, \varepsilon_{1i})$ are independent.
		\item
		The response functions
		$\mu_0, \mu_1$ are Lipschitz continuous with parameter $L$.
	\end{enumerate}
\end{definition}
Note that if $(Y(0), Y(1), W, X)$ is distributed according to a distribution in $D^L_{mn}$, then 
$(Y(0), X)$ given $W=0$ and $(Y(1), X)$ given $W=1$ have marginal distributions in $F^L$, and 
$(X, \mu_1(X) - Y(0))$ given $W = 0$ and $(X, Y(1) - \mu_0(X))$ given $W = 1$ have distributions in $F^{2L}$, and we therefore conclude that $D^L_{mn} \in S\left(\frac2{2+d},\frac2{2+d}\right)$.

We will first prove in Theorem \ref{theorem:optimalbalanced} that the best possible rate that can be uniformly achieved for distributions in this family is
$$\mathcal{O}(n^{2/(2+d)} + m^{2/(2+d)}).$$
This is precisely the rate the T-learner with the right base learners achieves. 
We will then show in Theorem \ref{Theorem:Lipschitz_Convergence} that the X-learner with the KNN estimator for both stages achieves this optimal rate as well, and conclude that both the T- and X-learners achieve the optimal minimax rate for this class of distributions.


\newcommand{\ld}{\D^L_{mn}}
\subsubsection*{Minimax lower bound}\label{App:Minimax}
In this section, we will derive a lower bound on the best possible rate for $\D^L_{mn}$. 
\begin{theorem}[Minimax Lower Bound] \label{theorem:optimalbalanced}
	Let $\hat \tau$ be an arbitrary estimator,  let 
	$a_1, a_2 > 0$, and let $c$ be such that for all $n,m\ge 1$,
	\begin{equation}\label{eq:minimaLB}
	\sup_{\Pcal \in \ld} \EMSE(\Pcal, \hat \tau^{mn})		
	\le
	c (m^{-a_0} + n^{-a_1});
	\end{equation}
	then $a_1$ and $a_2$ are at most $2/(2+d)$:
	$$a_0, a_1 \le 2/(2+d).$$
\end{theorem}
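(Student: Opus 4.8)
The plan is to reduce this CATE lower bound to the classical minimax lower bound for Lipschitz regression, which is known to be of order $N^{-2/(2+d)}$ (this is exactly the rate recorded for the class $F^L$ in the remark following Definition \ref{def:lipschitzRegression}, where KNN and Nadaraya--Watson are noted to be minimax optimal). By the symmetry between the treatment and control groups it suffices to prove $a_1 \le 2/(2+d)$; the bound $a_0 \le 2/(2+d)$ then follows by exchanging the roles of $W = 1$ and $W = 0$.

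First I would pass to a hard subfamily $\mathcal{G} \subset \ld$ obtained by freezing the control response, $\mu_0 \equiv 0$, while letting $\mu_1$ range over all $L$-Lipschitz functions on $[0,1]^d$. On $\mathcal{G}$ the CATE is exactly $\tau = \mu_1$, so estimating $\tau$ is identical to estimating $\mu_1$. The key observation is that within $\mathcal{G}$ the control observations $(X_i, Y_i)_{W_i = 0}$ have a distribution that does not depend on the unknown $\mu_1$: they are ancillary. Since (under $\ld$) $X, W$, and $\varepsilon$ are independent and there are exactly $n$ treated units with uniform features and Gaussian noise, the treated tuples $(X_i, Y_i)_{W_i = 1}$ are i.i.d.\ from a distribution in $F^L$ with regression function $\mu_1$. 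Hence the minimax risk for CATE estimation over $\mathcal{G}$ is bounded below by the minimax risk of estimating an $L$-Lipschitz function from $n$ i.i.d.\ samples.

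Next I would invoke the standard nonparametric lower bound: there is a constant $c' > 0$, depending only on $L, \sigma, d$, with
$$
\inf_{\hat\tau'} \sup_{\Pcal \in \mathcal{G}} \EMSE(\Pcal, \hat\tau') \ge c'\, n^{-2/(2+d)},
$$
provable by Assouad's lemma or Fano's method with a packing of localized bump perturbations (e.g.\ \cite{Tsybakov2009}). Combining with the hypothesized upper bound and the chain $\sup_{\Pcal \in \ld} \ge \sup_{\Pcal \in \mathcal{G}} \ge \inf_{\hat\tau'} \sup_{\Pcal \in \mathcal{G}}$ gives, for all $m, n \ge 1$,
$$
c'\, n^{-2/(2+d)} \le c\left(m^{-a_0} + n^{-a_1}\right).
$$
Holding $n$ fixed and letting $m \to \infty$ kills the $m^{-a_0}$ term, leaving $c'\, n^{-2/(2+d)} \le c\, n^{-a_1}$ for every $n$. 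If $a_1 > 2/(2+d)$, then $n^{-a_1} = o(n^{-2/(2+d)})$, contradicting this inequality for large $n$; hence $a_1 \le 2/(2+d)$. The mirror construction with $\mu_1 \equiv 0$ (so $\tau = -\mu_0$ and the control sample of size $m$ carries all the information) yields $a_0 \le 2/(2+d)$.

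The main obstacle is making the reduction fully rigorous, i.e.\ the claim that the control data cannot help and that the worst-case CATE risk dominates the Lipschitz-regression minimax risk. I would handle this by the ancillarity/sufficiency argument above, or, to sidestep subtleties about how $\hat\tau^{mn}$ depends on the ancillary control sample, by a randomization argument: any $\hat\tau^{mn}$ induces a valid estimator for the pure regression problem by appending an independently simulated control sample drawn from the (parameter-free) control law of $\mathcal{G}$, and its risk equals a risk attained within $\mathcal{G}$, so the regression minimax bound transfers directly. Everything else is bookkeeping matching the two rates.
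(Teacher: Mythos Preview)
Your proposal is correct and follows essentially the same approach as the paper: restrict to the subfamily with $\mu_0\equiv 0$ so that $\tau=\mu_1$, observe that the control sample is ancillary (its law does not depend on $\mu_1$), and reduce to the classical minimax lower bound for Lipschitz regression from $n$ samples. The paper handles the ancillary control data by defining $\hat\mu_1^n(x;\D_1^n)=\E_{\D_0^{m_n}\sim\Pcal_0}[\hat\tau^{m_n n}(x;\D_0^{m_n},\D_1^n)]$ and invoking Jensen's inequality, which is exactly your ``randomization/simulation'' argument spelled out; and where you let $m\to\infty$ to kill the $m^{-a_0}$ term, the paper equivalently picks a sequence $m_n$ with $m_n^{-a_0}+n^{-a_1}\le 2n^{-a_1}$.
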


\begin{proof}[Proof of Theorem \ref{theorem:optimalbalanced}]
	To simplify the notation, we define $a = 2/(2+d)$.
	We will show by contradiction that $a_1 \le a$. The proof of $a_0$ is mathematically symmetric.
	We assume that $a_1$ is bigger than $a$, and we show that this implies that there exists a sequence of estimators
	$\hat \mu^n_{1}$, such that
	$$
		\sup_{\Pcal_1 \in F^L} 
		\E_{D_1^n \sim \Pcal_1^n}\Big[
		(
		\mu_1(\X) - \hat \mu^n_{1}(\X; \D_1^n)
		)^2
		\Big]
		\le 2c n^ {-a_1},
	$$
	which is a contradiction, since by the definition of $D_L^{mn}$, $\mu_{1}$ cannot be estimated at a rate faster than $n^{-a}$ (cf., \cite{gyorfi2006distribution}).
	Note that we write here $\hat \mu^n_{1}(\X; \D_1^n)$, because we want to be explicit that $\hat \mu_1^n$ depends only on the treated observations.
	
	Similiarly to $\hat \mu^n_{1}(\X; \D_1^n)$, we will use the notation $\hat\tau^{mn}(\X;\D_0^m, \D_1^n)$ to be explicit about the dependence of the estimator $\hat \tau^{mn}$ on the data in the control group, $\D_0^m$, and on the data in the treatment group, $\D_1^n$.
%
	Furthermore, note that in Definition \ref{definition:Dfam} each distribution in $\ld$ is fully specified by the distribution of $W$, $\varepsilon$, and the functions $\mu_1$ and $\mu_2$. Define $C_L$ to be the set of all functions $f:[0,1]^d \longrightarrow \R$ that are L-Lipschitz continuous. 
	For $f_1\in C_L$, define $\mathbb{D}(f_1)$ to be the distribution in $\ld$ with $\mu_0 = 0$, $\mu_1 = f_1$, $\varepsilon_{0} \perp \varepsilon_{1}$, and $W$ defined componentwise by 
			$$
				W_i = \begin{cases}
					1 \mbox{ if } i \le n \\
					0 \mbox{ otherwise.} 
				\end{cases}
			$$
	Then \refb{eq:minimaLB} implies that
	\begin{align*}
		c (m^{-a_0} + n^{-a_1}) 
		&\ge
		\sup_{\Pcal \in \ld} \E_{(\D_0^m \times \D_1^n) \sim \Pcal}
		\left[(\tau^\Pcal (\X) - \hat \tau^{mn}(\X;\D_0^m, \D_1^n))^2\right]
		\\&\ge
		\sup_{f_1 \in C_L} \E_{(\D_0^m \times \D_1^n) \sim \mathbb{D}(f_1)}
		\left[(\mu_1^{ \mathbb{D}(f_1)}(\X) - \hat \tau^{mn}(\X;\D_0^m, \D_1^n))^2\right].
	\end{align*}
	This follows, because in $\mathbb{D}(f_1)$, $\tau^{\mathbb{D}(f_1)} = \mu^{\mathbb{D}(f_1)}_1 = f_1$. We use here the notation $\tau^\Pcal$, $\tau^{\mathbb{D}(f_1)}$, and $\mu_1^{ \mathbb{D}(f_1)}$ to emphasize that those terms depend on the distribution of $\Pcal$ and ${\mathbb{D}(f_1)}$, respectively. 
	
	Let $\Pcal_0$ be the distribution of $\D_0^m = (X^0_i, Y^0_i)_{i=1}^N$ under $\mathbb{D}(f_1)$. Note that under $\Pcal_0$, $X_i \overset{iid}\sim [0,1]$, and $Y^0\overset{iid}\sim \N(0, \sigma^2)$, and $X^0$ and $Y^0$ are independent. In particular, $\Pcal_0$ does not depend on $f_1$. 
	We can thus write
	\begin{align*}
		c (m^{-a_0} + n^{-a_1}) 
		&\ge
		\sup_{f_1 \in C_L} \E_{(\D_0^m \times \D_1^n) \sim \mathbb{D}(f_1)}
		\left[\left(\mu_1^{ \mathbb{D}(f_1)}(\X) - \hat \tau^{mn}(\X;\D_0^m, \D_1^n)\right)^2\right]
		\\& =
		\sup_{f_1 \in C_L} \E_{\D_1^n \sim \mathbb{D}_1(f_1)}
		\E_{\D_0^n \sim \Pcal_0}
		\left[\left(\mu_1^{ \mathbb{D}_1(f_1)}(\X) - \hat \tau^{mn}(\X;\D_0^m, \D_1^n)\right)^2\right]
		\\& \ge
		\sup_{f_1 \in C_L} \E_{\D_1^n \sim \mathbb{D}_1(f_1)}
		\left[\left(\mu_1^{ \mathbb{D}_1(f_1)}(\X) - 	\E_{\D_0^n \sim \Pcal_0} \hat \tau^{mn}(\X;\D_0^m, \D_1^n)\right)^2\right].
	\end{align*}
	$\mathbb{D}_1(f_1)$ is here the distribution of $\D^n_1$ under $\mathbb{D}(f_1)$. For the last step we used Jensen's inequality. 
	
	Now choose a sequence $m_n$ in such a way that
		$m_n^{-a_1} + n^{-a_2} \le 2 n^{-a_1}$, and define 
	$$
		\hat \mu_{1}^n(x; \D_1^n) = 
		\E_{\D_0^{m_n} \sim \Pcal_0^{m_n}} \left[\hat \tau^{mn}(x;\D_0^{m_n}, \D_1^n)\right].
	$$
	Furthermore, note that 
	$$
		\left\{\mathbb{D}_1(f_1) ~:~ f_1 \in C_L \right\} = \left\{\Pcal_1 \in F^L\right\}
	$$
	in order to conclude that
	\begin{align*}
		2 c n^{-a_1} 
		\ge 
		c (m_n^{-a_0} + n^{-a_1}) 
		&\ge
		\sup_{f_1 \in C_L} \E_{\D_1^n \sim \mathbb{D}_1(f_1)}
		\left[\left(\mu_1^{\mathbb{D}_1(f_1)}(\X) - 	\hat \mu_1^{nm}(D_1^n; \X)\right)^2\right]
		\\&\ge
		 \sup_{\Pcal_1 \in F^L} \E_{D_1^n \sim \Pcal_1^n}
		\left[\left(\mu_1^{\Pcal_1^n}(\X) - 	\hat \mu_1^{nm}(D_1^n; \X)\right)^2\right].
	\end{align*}

	This is, however, a contradiction, because we assumed $a_1 > a$. 
\end{proof}

 
\subsubsection*{EMSE convergence of the X-learner} 

Finally, we can show that the X-learner with the right choice of base learners achieves this minimax lower bound.
\begin{theorem} \label{Theorem:Lipschitz_Convergence}
	Let $d > 2$ and assume 
	$(X, W, Y(0), Y(1))  \sim \Pcal \in \mathcal{D}_{mn}^{L}$. In particular,
	$\mu_0$ and $\mu_1$ are Lipschitz continuous with constant $L$, 
	$$
	|\mu_w(x) - \mu_w(z)| \le L \|x-z\|  ~~~\mbox{for $w \in \{0,1\}$},
	$$
	and $X \sim Unif([0,1]^d)$.
	
	Furthermore, let $\taun$ be the X-learner with 
	\begin{itemize}
		\item $g\equiv 0$,
		\item the base learner of the first stage for the control group $\hat \mu_0$, is a KNN estimator with constant $k_0=   \left\lceil(\sigma^2 / L^2)^{\frac{d}{2+d}} m^{\frac2{d+2}}\right\rceil$,
		\item the base learner of the second stage for the treatment group, $\hat \tau_1$, is a KNN estimator with constant $k_1 = \left\lceil(\sigma^2 / L^2)^{\frac{d}{2+d}} n^{\frac2{d+2}}\right\rceil$.
	\end{itemize}
	Then $\taun$ achieves the optimal rate	as given in Theorem \ref{theorem:optimalbalanced}. That is, there exists a constant $C$ such that
	\begin{equation}
	\E \|  \tau  -   \taun\|^2
	\le
	C 
	\sigma^{\frac4{d+2}} L^{\frac{2d}{2+d}}
	\left(m^{-2/ (2+d)} + n^{-2/ (2+d)} \right).
	\end{equation}
\end{theorem}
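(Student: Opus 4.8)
The plan is to exploit that with $g\equiv 0$ the X-learner collapses to $\hat\tau^{mn}=\hat\tau_1$, the second-stage $k_1$-NN regression of the imputed effects $\Dt^1_i = Y^1_i-\hat\mu_0(X^1_i)$ on the treated features. Writing $Y^1_i=\mu_0(X^1_i)+\tau(X^1_i)+\varepsilon_{1i}$ and expanding the first-stage estimate $\hat\mu_0(X^1_i)=k_0^{-1}\sum_{l}(\mu_0(X^0_l)+\varepsilon_{0l})$ over its $k_0$ nearest control neighbours, the pointwise error decomposes into four pieces. For a test point $x$, letting the sums over $i$ run over the $k_1$ nearest treated neighbours of $x$ and the sums over $l$ over the $k_0$ nearest control neighbours of $X^1_i$,
\begin{align*}
\tau(x) - \hat\tau_1(x)
= {}& \underbrace{\Big(\tau(x) - \tfrac{1}{k_1}\sum_{i}\tau(X^1_i)\Big)}_{A(x)}
- \underbrace{\tfrac{1}{k_1}\sum_{i}\varepsilon_{1i}}_{B(x)} \\
& - \underbrace{\tfrac{1}{k_1}\sum_{i}\Big(\mu_0(X^1_i) - \tfrac{1}{k_0}\sum_{l}\mu_0(X^0_l)\Big)}_{C(x)}
+ \underbrace{\tfrac{1}{k_1}\sum_{i}\tfrac{1}{k_0}\sum_{l}\varepsilon_{0l}}_{D(x)}.
\end{align*}
Here $A$ is the second-stage smoothing bias, $B$ the second-stage noise average, $C$ the first-stage smoothing bias propagated through the second average, and $D$ the first-stage noise propagated through both averages. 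Since $(u_1+u_2+u_3+u_4)^2\le 4\sum_j u_j^2$, it suffices to bound $\E\!\int A^2$, $\E\!\int B^2$, $\E\!\int C^2$, $\E\!\int D^2$ separately, which has the convenient effect of killing every cross term.

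The conditioning on $\sum_i W_i=n$ is handled first: by Theorem \ref{Theorem:CondIndi}, after conditioning on the treatment-assignment vector $W$ the treated features $(X^1_i)$ and the control features $(X^0_l)$ are each i.i.d. uniform on $[0,1]^d$ and independent across groups, and since $X\perp W$ in $\mathcal D^L_{mn}$ no reweighting is needed. I therefore work conditionally on $W$, obtain bounds depending only on $(n,m)$, and average over $W$ to reach the distribution given $\sum_i W_i=n$. For the two second-stage terms I use standard $k$-NN theory for uniform designs. Because $\tau=\mu_1-\mu_0$ is $2L$-Lipschitz, $|A(x)|\le 2L\,\rho^1_{k_1}(x)$, where $\rho^1_{k_1}(x)$ is the distance from $x$ to its $k_1$-th treated neighbour; the moment bound $\E\!\int (\rho^1_{k_1}(x))^2\,dx\le C_d (k_1/n)^{2/d}$ \cite{gyorfi2006distribution} gives $\E\!\int A^2\le C L^2 (k_1/n)^{2/d}$. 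Conditioning on all features, the $k_1$ summands in $B$ are independent, mean zero, with variance $\sigma^2$, so $\E\!\int B^2=\sigma^2/k_1$.

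The first-stage terms are where the real work lies. For $D$, write $D(x)=\sum_l c_l\varepsilon_{0l}$ with $c_l=(k_1k_0)^{-1}\#\{i:\, l\text{ is a }k_0\text{-neighbour of }X^1_i\}$; then $\sum_l c_l=1$ and $c_l\le 1/k_0$, whence $\sum_l c_l^2\le 1/k_0$ and $\E[D^2\mid\text{features}]=\sigma^2\sum_l c_l^2\le \sigma^2/k_0$, so overlap of the control neighbourhoods can only help. Term $C$ is the main obstacle. It is the second-stage average of the first-stage bias $\beta_0(z):=\mu_0(z)-k_0^{-1}\sum_{l}\mu_0(X^0_l)$, which satisfies $|\beta_0(z)|\le L\,\rho^0_{k_0}(z)$ by Lipschitzness. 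Jensen gives $C(x)^2\le k_1^{-1}\sum_i\beta_0(X^1_i)^2$, and integrating over $x$, using that every $x$ has exactly $k_1$ treated neighbours, yields $\int C^2\,dx\le k_1^{-1}\sum_j w_j\,\beta_0(X^1_j)^2$, where $w_j$ is the Lebesgue measure of the catchment region $\{x:X^1_j\text{ is a }k_1\text{-neighbour of }x\}$ and $\sum_j w_j=k_1$. The delicate step is the uniform catchment bound $\E[w_j\mid X^1_j]\le C_d\,k_1/n$, i.e. that no treated point is the $k_1$-nearest neighbour of an atypically large region; this is a standard but nontrivial consequence of the uniform design together with a $d$-dimensional ball-covering (cone) argument, and I expect it to be the technically hardest part of the proof. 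Granting it, and using that $\beta_0(X^1_j)$ depends only on $X^1_j$ and the (independent) control sample, I get $\E\!\int C^2\le C_d\,\E[\beta_0(X^1)^2]\le C L^2 (k_0/m)^{2/d}$.

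Finally I substitute the prescribed bandwidths. Choosing $k_1=\lceil(\sigma^2/L^2)^{d/(2+d)}n^{2/(d+2)}\rceil$ balances $A$ and $B$ at the common order $\sigma^{4/(d+2)}L^{2d/(2+d)}n^{-2/(2+d)}$, and $k_0=\lceil(\sigma^2/L^2)^{d/(2+d)}m^{2/(d+2)}\rceil$ balances $C$ and $D$ at $\sigma^{4/(d+2)}L^{2d/(2+d)}m^{-2/(2+d)}$. Summing the four contributions gives
$$
\E\|\tau-\taun\|^2 \le C\,\sigma^{4/(d+2)}L^{2d/(2+d)}\big(m^{-2/(2+d)}+n^{-2/(2+d)}\big),
$$
as claimed. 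Since this matches the minimax lower bound of Theorem \ref{theorem:optimalbalanced}, the conclusion is that the X-learner with these KNN base learners is minimax optimal over $\mathcal D^L_{mn}$.
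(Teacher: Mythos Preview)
Your proposal is correct and reaches the same bound, but the decomposition and the handling of the first-stage bias differ from the paper's in instructive ways.

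\textbf{Decomposition.} You write $Y^1_i=\mu_0(X^1_i)+\tau(X^1_i)+\varepsilon_{1i}$ and split $\tau-\hat\tau_1$ into the four pieces $A,B,C,D$. The paper instead splits along $\tau=\mu_1-\mu_0$: first $2\E|\mu_1(\X)-\hat\mu_1^n(\X)|^2$ (handled by the standard KNN bound, Lemma \ref{lemma::KNN rate}), then $2\E\big|\mu_0(\X)-\tfrac{1}{k_1}\sum_i\hat\mu_0^m(X^1_{(i,n)}(\X))\big|^2$, and this last term is further split into a bias piece (your $C$ plus a second-stage $\mu_0$-hop) and a noise piece (your $D$). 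Both decompositions are valid; yours isolates the $\tau$-bias cleanly, while the paper's keeps the two response functions separate.

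\textbf{Term $C$.} Here the approaches really diverge. You integrate over $x$ first and are led to the reverse-nearest-neighbour (catchment) bound $\E[w_j\mid X^1_j]\le C_d\,k_1/n$, which you rightly flag as the delicate step. The paper avoids this entirely. Its key observation is that for the uniform design on $[0,1]^d$, the first-neighbour moment bound of Lemma \ref{lemma:FirstNeighborBound} holds \emph{uniformly in the query point}; hence, via a partitioning trick (split the control sample into $k_0$ blocks and take the 1-NN in each), $\E_{X^0}\big[(\rho^0_{k_0}(z))^2\big]\le C(k_0/m)^{2/d}$ uniformly in $z\in[0,1]^d$. Since $\beta_0(z)^2\le L^2(\rho^0_{k_0}(z))^2$ and the treated sample is independent of $X^0$, one gets directly
\[
\E\Big[\tfrac{1}{k_1}\!\!\sum_{i\in N_{k_1}(\X)}\!\!\beta_0(X^1_i)^2\Big]
=\E_{\X,X^1}\Big[\tfrac{1}{k_1}\!\!\sum_{i}\E_{X^0}[\beta_0(X^1_i)^2]\Big]
\le C L^2\Big(\tfrac{k_0}{m}\Big)^{2/d},
\]
with no catchment argument needed. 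In short: your route works but imports a nontrivial geometric lemma; the paper's route is more elementary because it exploits independence of the treated and control samples together with a pointwise-uniform first-stage bound.

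\textbf{Term $D$.} Your coefficient argument ($\sum_l c_l=1$, $c_l\le 1/k_0$, hence $\sum c_l^2\le 1/k_0$) is a bit sharper and cleaner than the paper's, which applies Jensen to the outer $k_1$-average and then bounds each inner variance by $\sigma^2/k_0$; both give $\sigma^2/k_0$.
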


Note that in the third step of the X-learner, Equation [\ref{equ:3rdStep}], $\hat \tau_0$ and $\hat \tau_1$ are averaged:
$$    
	\taun(x) = g(x) \hat \tau_0^{mn}(x) + (1 - g(x)) \hat \tau_1^{mn}(x).
$$
By choosing $g\equiv 0$, we are analyzing $\hat\tau_1^{mn}$. By a symmetry argument it is straightforward to show that with the right choice of base learners, $\hat\tau_0^{mn}$ also achieves a rate of $\mathcal{O}\left(m^{-2/ (2+d)} + n^{-2/ (2+d)} \right)$. With this choice of base learners the X-learner achieves this optimal rate for every choice of $g$. 

We first state two useful lemmata that we will need in the proof of this theorem. 
\begin{lemma} \label{lemma::KNN rate}
	Let $\hat \mu_0^m$ be a KNN estimator based only on the control group with constant $k_0$, and let 
	      $\hat \mu_1^n$ be a KNN estinator based on the treatment group with constant $k_1$; then, by the assumption of Theorem \ref{Theorem:Lipschitz_Convergence},
	\begin{align*}
	\E [\|\hat \mu_0^{m} - \mu_0\|^2]
	&\le 
	\frac{\sigma^2}{k_0} + c L^2 \left(\frac{k_0}{{m}}\right)^{2/d},
	\\
	\E [\|\hat \mu_1^{n} - \mu_1\|^2]
	&\le 
	\frac{\sigma^2}{k_1} + c L^2 \left(\frac{k_1}{{n}}\right)^{2/d},
	\end{align*}
	for some constant $c$.
\end{lemma}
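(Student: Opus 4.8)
The plan is to prove the second inequality, the bound on $\E[\|\hat\mu_1^n - \mu_1\|^2]$; the first is identical after relabeling the control group. Recall that the KNN estimator at a point $x$ averages the observed outcomes of the $k_1$ treated units whose features are closest to $x$: writing $X_{(1)}(x), \dots, X_{(k_1)}(x)$ for those neighbors, $Y_{(j)}(x)$ for the corresponding outcomes, and $\varepsilon_{(j)}(x)$ for the matching error terms, we have $\hat\mu_1^n(x) = \frac{1}{k_1}\sum_{j=1}^{k_1} Y_{(j)}(x)$ with $Y_{(j)}(x) = \mu_1(X_{(j)}(x)) + \varepsilon_{(j)}(x)$. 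First I would condition on the treated feature locations $X^1 = (X_i)_{W_i = 1}$ and split the pointwise MSE at $x$ into a variance term and a squared-bias term, exploiting that the neighbor identities are measurable with respect to the features alone.

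For the variance term, the $\varepsilon_i$ are independent and mean zero with conditional variance at most $\sigma^2$, so the average of the $k_1$ relevant errors has conditional variance at most $\sigma^2/k_1$; this bound is uniform in $x$ and in the feature configuration, hence survives integration over $x \sim \mathrm{Unif}([0,1]^d)$ and the outer expectation, contributing the $\sigma^2/k_1$ term. For the squared bias, the conditional mean of the estimator is $\frac{1}{k_1}\sum_{j}\mu_1(X_{(j)}(x))$, so the bias equals $\frac{1}{k_1}\sum_j \bigl(\mu_1(X_{(j)}(x)) - \mu_1(x)\bigr)$; Lipschitz continuity gives $|\mu_1(X_{(j)}(x)) - \mu_1(x)| \le L\,\|X_{(j)}(x) - x\|$, and then Cauchy--Schwarz yields a squared bias at most $\frac{L^2}{k_1}\sum_{j=1}^{k_1}\|X_{(j)}(x) - x\|^2 \le L^2\,\|X_{(k_1)}(x) - x\|^2$, where $X_{(k_1)}(x)$ is the farthest of the $k_1$ neighbors.

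The remaining and main obstacle is to bound $\E\int_{[0,1]^d}\|X_{(k_1)}(x) - x\|^2\,dx$, the expected squared distance to the $k_1$-th nearest neighbor among $n$ uniform points, and to show it is $\mathcal{O}\bigl((k_1/n)^{2/d}\bigr)$. This is a purely geometric estimate: the number of sample points inside a ball of radius $r$ around an interior point is binomial with mean proportional to $n r^d$, so the radius at which roughly $k_1$ points accumulate scales like $(k_1/n)^{1/d}$. Making this rigorous requires a tail bound showing that with high probability a ball of radius a constant multiple of $(k_1/n)^{1/d}$ already contains at least $k_1$ points, together with care near the boundary of the cube, where a ball may protrude outside $[0,1]^d$ and its effective volume is reduced only by a dimension-dependent constant factor. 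Both steps are classical, so I would extract the constant $c = c(d)$ by invoking the standard KNN analysis (cf.\ \cite{gyorfi2006distribution}). Combining the variance bound $\sigma^2/k_1$ with the bias bound $cL^2(k_1/n)^{2/d}$ and integrating over $\X$ then gives the stated inequality.
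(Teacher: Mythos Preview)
Your proposal is correct and aligns with the paper's approach: the paper's proof is the single line ``This is a direct implication of Theorem 6.2 in \cite{gyorfi2006distribution},'' and you have simply unpacked the standard bias--variance decomposition underlying that theorem before invoking the same reference for the nearest-neighbor distance bound. There is no substantive difference; your version is just more self-contained.
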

\begin{proof}[Proof of Lemma \ref{lemma::KNN rate}]
	This is a direct implication of Theorem 6.2 in \cite{gyorfi2006distribution}.
\end{proof}

\begin{lemma} \label{lemma:FirstNeighborBound}
	Let $x \in [0,1]^d$, $X_1, \ldots, X_n \overset{iid}{\sim} Unif([0,1]^d)$ and $d>2$. Define $\tilde{X}(x)$ to be the nearest neighbor of $x$; then there exists a constant $c$ such that for all $n > 0$,
	$$
	\E \|\tilde{X}(x) - x\|^2 \le \frac{c}{n^{2/d}}.
	$$
\end{lemma}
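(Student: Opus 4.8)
The plan is to combine the layer-cake formula for a nonnegative random variable with the elementary observation that the nearest-neighbour distance exceeds $r$ exactly when all $n$ sample points avoid the ball $B(x,r)$ of radius $r$ centred at $x$. First I would write, using $\E Z = \int_0^\infty \P(Z > t)\,dt$ with $Z = \|\tilde X(x) - x\|^2$ and the substitution $t = r^2$,
\begin{equation*}
\E\|\tilde X(x) - x\|^2 = \int_0^\infty \P\big(\|\tilde X(x) - x\| > r\big)\,2r\,dr,
\end{equation*}
where the integrand is supported on $r \in [0,\sqrt d]$ because the cube has diameter $\sqrt d$. Since $X_1,\dots,X_n$ are i.i.d.\ and $\tilde X(x)$ is the closest of them to $x$, the event $\{\|\tilde X(x) - x\| > r\}$ is precisely the event that none of the $X_i$ lands in $B(x,r)$, so
\begin{equation*}
\P\big(\|\tilde X(x) - x\| > r\big) = \big(1 - p_x(r)\big)^n, \qquad p_x(r) := \P\big(X_1 \in B(x,r)\big) = \lambda\big(B(x,r) \cap [0,1]^d\big),
\end{equation*}
the final equality holding because $X_1$ is uniform on the unit cube ($\lambda$ denotes Lebesgue measure).

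The crux of the argument, and the step I expect to be the main obstacle, is a lower bound on $p_x(r)$ that is uniform over all $x \in [0,1]^d$, including corner and boundary points where a ball can stick far out of the cube. For $r \le 1/2$ and each coordinate $j$, at least one of $x_j \le 1 - r$ or $x_j \ge r$ holds (their ranges cover $[0,1]$ once $r \le 1/2$); choosing, coordinatewise, the orthant direction with room to spare produces an orthant of $B(x,r)$ that is entirely contained in $[0,1]^d$. That orthant has volume $2^{-d} V_d r^d$, where $V_d$ is the volume of the unit Euclidean ball, so
\begin{equation*}
p_x(r) \ge 2^{-d} V_d\, r^d =: c_1 r^d \qquad (0 \le r \le 1/2),
\end{equation*}
and by monotonicity $p_x(r) \ge p_x(1/2) \ge c_1 2^{-d} =: \delta > 0$ for $r \in [1/2, \sqrt d]$.

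Finally I would bound $(1 - u)^n \le e^{-nu}$ and split the integral. On $[0,1/2]$,
\begin{equation*}
\int_0^{1/2} \big(1 - p_x(r)\big)^n 2r\,dr \le \int_0^\infty e^{-n c_1 r^d}\, 2r\,dr = \frac{2}{d}\,\Gamma\!\left(\tfrac{2}{d}\right)(c_1 n)^{-2/d},
\end{equation*}
where the closed form follows from the substitution $s = n c_1 r^d$ and $\Gamma(2/d) < \infty$. On $[1/2, \sqrt d]$ the integrand is at most $(1 - \delta)^n$, contributing at most $d\,(1-\delta)^n$, which decays exponentially in $n$ and is therefore dominated by $n^{-2/d}$ for all $n \ge 1$. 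Summing the two pieces yields a constant $c$, independent of $x$, with $\E\|\tilde X(x) - x\|^2 \le c\, n^{-2/d}$, as claimed. (The hypothesis $d > 2$ is inherited from Theorem \ref{Theorem:Lipschitz_Convergence} and is not actually needed for this estimate, which holds for every $d \ge 1$.)
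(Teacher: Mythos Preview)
Your proof is correct and follows essentially the same route as the paper: layer-cake for $\|\tilde X(x)-x\|^2$, the identity $\P(\|\tilde X(x)-x\|>r)=(1-p_x(r))^n$, a uniform lower bound $p_x(r)\ge c\,r^d$, the inequality $(1-u)^n\le e^{-nu}$, and a Gamma-function evaluation of the resulting integral. You are more careful than the paper in two places---the orthant argument that makes the bound $p_x(r)\ge 2^{-d}V_d r^d$ explicit for boundary $x$, and the split at $r=1/2$ to dispose of the region where $c_1 r^d$ may exceed $1$---and your remark that the hypothesis $d>2$ is not used here is correct.
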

\begin{proof}[Proof of Lemma \ref{lemma:FirstNeighborBound}]
	First of all we consider 
	\begin{align*}
	\P( \| \tilde{X}(x) -  x \| \ge \delta ) 
	= (1 - \P(\|X_1 - x \| \le \delta)) ^ n \le (1 - \tilde c \delta^d) ^n
	\le e^{-\tilde c \delta^d n}.
	\end{align*}
	Now we can compute the expectation:
	\begin{align*}
	\E \|\tilde{X}(x)    & - x\|^2 
	=  
	\int_0^\infty  
	\P( \| \tilde{X}(x) -  x \| \ge \sqrt{\delta} ) 
	d\delta
	\le 
	\int_0^{d}
	e^{-\tilde c \delta^{d/2} {n}}
	d\delta    
	\le 
	\frac{1 - \frac{1}{-d/2 + 1}}{(\tilde c{n})^{2/d}}.
	\end{align*}
\end{proof}

\begin{proof}[Proof of Theorem \ref{Theorem:Lipschitz_Convergence}]
	Many ideas in this proof are motivated by \cite{gyorfi2006distribution} and \cite{bickel2015mathematical}.
	Furthermore, note that we restrict our analysis here only to $\hat \tau_1^{mn}$, but the analysis of $\hat \tau_0^{mn}$ follows the same steps. 
	
	We decompose $\hat \tau_1^{mn}$ into
	\begin{align*}
	\hat \tau_1^{mn}(x) 
	&= 
	\frac1{k_1}\sum_{i=1}^{k_1}
	\left[      
	Y^{1}_{(i,{n})}(x)
	- 
	\hat \mu^{m}_0 \left( X_{({i},{n})}^{1} (x) \right) 
	\right]
	=
	\hat\mu^{n}_1(x) - 
	\frac1{k_1}\sum_{i=1}^{k_1}
	\hat \mu^{m}_0 \left( X_{({i},{n})}^{1} (x) \right), 
	\end{align*}
	where the notation that 
	$
	\left(\big(X_{(1,{n_w})}^{w}(x) ,Y^{w}_{(1,{n_w})}(x)\big), \ldots, \big(X_{({n_w},{n_w})}^{w} (x), Y^{w}_{({n_w},{n_w})}(x)\big)\right)
	$
	is a reordering of the tuples $\big(X_{j}^{w}(x) ,Y^{w}_{j}(x)\big)$ such that $\|X_{(i,{n_w})}^{w} (x) - x\|$ is increasing in $i$. With this notation we can write the estimators of the first stage as
	\begin{align*}
	\hat\mu_0^{m}(x) = \frac1{k_0}\sum_{i=1}^{k_0}Y^{0}_{(i,{m})}(x),
	\quad\quad\mbox{and}\quad\quad
	\hat\mu_1^{n}(x) = \frac1{k_1}\sum_{i=1}^{k_1}Y^{1}_{(i,{n})}(x),
	\end{align*}
	and we can upper bound the EMSE with the following sum:
	\begin{align*}
	&\E [| \tau (\mathcal{X})- \hat \tau_1^{mn}(\mathcal{X}) |^2]
	\\=~&  
	\E \Big[\Big| 
	\mu_1(\mathcal{X}) 
	- \mu_0(\mathcal{X}) 
	- \hat \mu^{n}_1(\mathcal{X}) 
	+ \frac1{k_1}\sum_{i=1}^{k_1}\hat \mu^{m}_0(X^1_{(i,n)}(\mathcal{X})) 
	\Big|^2\Big]        
	\\\le~& 
	2\E \left[\left| 
	\mu_1(\mathcal{X}) 
	- \hat \mu^{n}_1(\mathcal{X})
	\right|^2\right]        
	+        
	2\E \Big[\Big| 
	\mu_0(\mathcal{X})  
	- \frac1{k_1}\sum_{i=1}^{k_1}\hat \mu^{m}_0(X^1_{(i,n)}(\mathcal{X})) 
	\Big|^2\Big].        
	\end{align*}
	
	The first term corresponds to the regression problem of estimating the treatment response function in the first step of the X-learner and we can 
	control this term with Lemma \ref{lemma::KNN rate}:
	\begin{equation*}
	\E [\|\mu_1 - \hat \mu_1^{n} \|^2]
	\le 
	\frac{\sigma^2}{k_1} + c_1 L^2 \left(\frac{k_1}{{n}}\right)^{2/d}.
	\end{equation*}

	The second term is more challenging: 
	\begin{align}
	& \frac12   \E \Big[\Big|
	\mu_0(\mathcal{X})  
	- \frac1{k_1}\sum_{i=1}^{k_1}\hat \mu^{m}_0(X^1_{(i,n)}(\mathcal{X})) 
	\Big|^2\Big]     \notag
	\\\le~&
	\E \Big[\Big|
	\mu_0(\mathcal{X})  
	-      \frac1{k_1k_0} \sum_{i=1}^{k_1} \sum_{j=1}^{k_0} \mu_0\Big(X^0_{(j,m)}(X^1_{(i,n)}(\mathcal{X}))\Big)
		\Big|^2\Big]   \label{eterm}
	\\+ ~&
	\E \Big[\Big|
	\frac1{k_1k_0} \sum_{i=1}^{k_1} \sum_{j=1}^{k_0} \mu_0\Big(X^0_{(j,m)}(X^1_{(i,n)}(\mathcal{X}))\Big)
	- \frac1{k_1}\sum_{i=1}^{k_1}\hat \mu^{m}_0(X^1_{(i,n)}(\mathcal{X})) 
		\Big|^2\Big].           \label{varterm}
	\end{align}
	
	\refb{varterm} can be bound as follows:
	\begin{align*}
	[\ref{varterm}] 
	= ~&
	\E \left(
	\frac1{k_1k_0} \sum_{i=1}^{k_1} \sum_{j=1}^{k_0} 
	\mu_0\Big(X^0_{(j,m)}(X^1_{(i,n)}(\mathcal{X}))\Big)
	- Y_{(j,m)}^0(X^1_{(i,n)}(\mathcal{X}))
	\right)^2
	\\ \le &
	\max_i \frac1{k^2_{m}} \sum_{j=1}^{k_0} \E \left(
	\mu_0\Big(X^0_{(j,m)}(X^1_{(i,n)}(\mathcal{X}))\Big)
	- Y_{(j,m)}^0(X^1_{(i,n)}(\mathcal{X}))
	\right)^2
	\\ = &
	\max_i \frac1{k^2_{m}} \sum_{j=1}^{k_0} \E \Bigg[ \E \bigg[\left( 
	\mu_0\Big(X^0_{(j,m)}(X^1_{(i,n)}(\mathcal{X}))\Big)
	- Y_{(j,m)}^0(X^1_{(i,n)}(\mathcal{X}))
	\right) ^2 \bigg\|\mathcal{D}, \mathcal{X}\bigg]\Bigg]
	\le 
	\frac{\sigma^2}{k_0}. 
	\end{align*}
	The last inequality follows from the assumption that, conditional on $\D$,
	$$
	Y^0_{(j,m)}(x) \sim \mathcal{N}\left(\mu_0\left(X^0_{(j,m)}(x)\right), \sigma^2\right).
	$$
	
	Next we find an upper bound for [\ref{eterm}]:
	\begin{align}
	[\ref{eterm}] 
	&\le          
	\E \Bigg( 
	\frac1{k_1k_0} \sum_{i=1}^{k_1} \sum_{j=1}^{k_0}
	\Big\|\mu_0(\mathcal{X})  
	-   \mu_0\Big(X^0_{(j,m)}(X^1_{(i,n)}(\mathcal{X}))\Big) 
	\Big\|
	\Bigg)^2 \notag
	\\& \le 
	\E \Bigg(
	\frac1{k_1k_0} \sum_{i=1}^{k_1} \sum_{j=1}^{k_0} 
	L \Big\| \mathcal{X} - X^0_{(j,m)}(X^1_{(i,n)}(\mathcal{X})) 
	\Big\|    
	\Bigg)^2 \notag
	\\& \le   
	L^2
	\frac1{k_1k_0} \sum_{i=1}^{k_1} \sum_{j=1}^{k_0} 
	\E  \Big\| \mathcal{X} - X^0_{(j,m)}(X^1_{(i,n)}(\mathcal{X})) 
	\Big\|^2  \label{Jensen}
	\\& \le   
	L^2
	\frac1{k_1} \sum_{i=1}^{k_1} 
	\E  \Big\| 
	\mathcal{X} - X^1_{(i,n)}(\mathcal{X}) 
	\Big\|^2 \label{simpleKNNdist}
	\\& +
	L^2
	\frac1{k_1k_0} \sum_{i=1}^{k_1} \sum_{j=1}^{k_0} 
	\E  \Big\| 
	X^1_{(i,n)}(\mathcal{X}) - X^0_{(j,m)}(X^1_{(i,n)}(\mathcal{X})) 
	\Big\|^2 \label{sndOrdKNNdist}
	\end{align}
	where [\ref{Jensen}] follows from Jensen's inequality.
	
	Let's consider [\ref{simpleKNNdist}].
	We partition the data into $A_1, \ldots, A_{k_1}$ sets, where the first $k_1 - 1$ sets have $\lfloor\frac{n}{k_1}\rfloor$
	elements and we define $\tilde X_{i,1}(x)$ to be the nearest neighbor of $x$ in $A_i$. Then we can conclude that
	\begin{align*}
	\frac1{k_1} \sum_{i=1}^{k_1} 
	\E  \Big\| 
	\mathcal{X} - X^1_{(i,n)}(\mathcal{X}) 
	\Big\|^2 
	&\le
	\frac1{k_1} \sum_{i=1}^{k_1} 
	\E  \Big\| 
	\mathcal{X} - \tilde X_{i,1}(\mathcal{X}) 
	\Big\|^2 
	\\ & =
	\frac1{k_1} \sum_{i=1}^{k_1} 
	\E \bigg [ \E\Big[\Big\| 
	\mathcal{X} - \tilde X_{i,1}(\mathcal{X}) 
	\Big\|^2 \Big | \mathcal{X} \Big] \bigg]
	\le
	\frac{\tilde c}{\lfloor\frac{n}{k_1}\rfloor^{2/d}}.
	\end{align*}
	Here the last inequality follows from Lemma \ref{lemma:FirstNeighborBound}. With exactly the same argument, we can bound [\ref{sndOrdKNNdist}] and we thus have
	$$
	[\ref{eterm}] 
	\le 
	L^2 \tilde c * 
	\left(
	\frac{1}{\lfloor\frac{n}{k_1}\rfloor^{2/d}} 
	+ 
	\frac{1}{\lfloor\frac{n_2}{k_2}\rfloor^{2/d}}
	\right) 
	\le
	2 \tilde c L^2 * 
	\left(
	\left(\frac{k_1}{n}\right)^{2/d} 
	+ 
	\left(\frac{k_0}{m}\right)^{2/d}
	\right).     
	$$
	
	Plugging everything in, we have
	\begin{align*}
	\E [| \tau (\mathcal{X})- \hat \tau_1^{mn}(\mathcal{X}) |^2] 
	&\le 
	2\frac{\sigma^2}{k_1} 
	+ 
	2(c_2 + 2\tilde c) L^2 \left(\frac{k_1}{{n}}\right)^{2/d}
	+
	2\frac{\sigma^2}{k_0} 
	+
	4 \tilde c  L^2  
	\left(\frac{k_0}{m}\right)^{2/d}
	\\&\le
	C\left(
	\frac{\sigma^2}{k_1} 
	+ 
	L^2 \left(\frac{k_1}{{n}}\right)^{2/d}
	+
	\frac{\sigma^2}{k_0} 
	+
	\left(\frac{k_0}{m}\right)^{2/d}
	\right)
	\end{align*}
	with $C = 2 \max(1, c_2 + 2\tilde c, 2 \tilde c)$.
\end{proof}
	
\section{Pseudocode}
\label{app:learners}

In this section, we  present pseudocode for the algorithms in this paper. 
We denote by $Y^0$ and $Y^1$ the observed outcomes for the control group and the treatment group, respectively. For example, $Y^1_i$ is the observed outcome of the $i$th unit in the treatment group. $X^0$ and $X^1$ are the features of the control units and the treated units, and hence $X^1_i$  corresponds to the feature vector of the $i$th unit in the treatment group. $M_k(Y\sim X)$ is  the notation for a regression estimator, which estimates $x \mapsto \E[Y|X=x]$. It can be any regression/machine learning estimator. In particular, it can be a black box algorithm.

\begin{algorithm}[h!] 
	\begin{algorithmic}[1]
		\Procedure{T-learner}{$X, \Yobs, W$}
		
		\State $\hat \mu_0 	= M_0(Y^0 \sim X^0)$
		\State $\hat \mu_1	 = M_1(Y^1 \sim X^1)$
		
		\item[]
		\State $\hat \tau(x) = \hat \mu_1(x) - \hat \mu_0(x)$ 
		\EndProcedure
	\end{algorithmic}
	\caption{T-learner}
	
	\label{algo:Tlearner}
		\algcomment{$M_0$ and $M_1$ are here some, possibly different, machine-learning/regression algorithms.}	
\end{algorithm}

\begin{algorithm}[h!] 
	\begin{algorithmic}[1]
		\Procedure{S-learner}{$X, \Yobs, W$}
		
		\State $\hat \mu 	= M(\Yobs \sim (X, W))$
						
		\State $\hat \tau(x) = \hat \mu(x,1) - \hat \mu(x,0)$ 
		\EndProcedure
	\end{algorithmic}
	\caption{S-learner}
	
	\label{algo:Slearner}
	\algcomment{$M(\Yobs \sim (X, W))$ is the notation for estimating $(x,w) \mapsto \E[Y|X=x, W=w]$ while treating $W$ as a 0,1--valued feature.}
\end{algorithm}

\begin{algorithm}[h!]
	\begin{algorithmic}[1]	
		\Procedure{X-learner}{$X, \Yobs, W, g$}

		\item[]
		\State $\hat \mu_0	= M_1(Y^0 \sim X^0)$ 
				\Comment{Estimate response function}
		\State $\hat \mu_1	 = M_2(Y^1 \sim X^1)$
		
		\item[]
		\State $\Dt^1_{i} =         Y^1_{i}                         -     \hat \mu_0(X^1_{i})$
				\Comment{Compute imputed treatment effects}
		\State $\Dt^0_{i} =     \hat \mu_1(X^0_{i})     -     Y^0_{i}$
		
		\item[]
		\State $\hat \tau_1      = M_3(\Dt^{1}      \sim     X^1)$
				\Comment{Estimate CATE in two ways}
		\State $\hat \tau_0     = M_4(\Dt^0     \sim     X^0)$
		
		\item[]
		\State $\hat \tau(x) = g(x) \hat \tau_0(x) + (1- g(x)) \hat \tau_1(x)$ 
				\Comment{Average the estimates}
		
		\EndProcedure
	\end{algorithmic}
	\caption[test]{X-learner}

	\label{aglo:Xlearner}
	
	\algcomment{$g(x) \in [0,1]$ is a weighting function that is chosen to minimize the variance of $\hat \tau(x)$. It is sometimes possible to estimate $\cov(\tau_0(x) , \tau_1(x) )$, and compute the best $g$ based on this estimate. However, we have made good experiences by choosing $g$ to be an estimate of the propensity score. }	
\end{algorithm}

\begin{algorithm}[h!]
	\begin{algorithmic}[1]
		\Procedure{F-learner}{$X, \Yobs, W$}
		
		\State 	$\hat e = M_e[W \sim X]$
		
		\State  $Y^*_i = \Yobs_i  \frac{W_i - \hat e(X_i)}{\hat e(X_i)(1- \hat e(X_i))}$
		
		\State 	$\hat \tau = M_{\tau}(Y^* \sim X)$
		
		\EndProcedure
	\end{algorithmic}
	\caption{F-learner}
	
	\label{algo:Flearner}
	
\end{algorithm}

\begin{algorithm}[h!]
	\begin{algorithmic}[1]
		\Procedure{U-learner}{$X, \Yobs, W$}
		
		\State 	$\hat \mu_{obs} = M_{obs}(Y^{obs} \sim X)$

		\State 		$\hat e = M_e[W \sim X]$
				
		\State 	$R_i = (\Yobs_i - \hat \mu_{obs}(X_i)) / (W_i - \hat e(X_i))$

		\State 		$\hat \tau = M_{\tau}(R \sim X)$
		\EndProcedure
	\end{algorithmic}
	\caption{U-learner}
	
	\label{algo:Ulearner}
\end{algorithm}

\begin{algorithm}[h!]
	\caption{Bootstrap Confidence Intervals 1}\label{alg:computeCI}
	\begin{algorithmic}[1]

		\Procedure{computeCI}{
			\newline$x$: features of the training data, 
			\newline $w$: treatment assignments of the training data, 
			\newline $y$: observed outcomes of the training data,
			\newline $p$: point of interest}

		\State $S_0 = \{i ~ : ~ w_i = 0 \}$\; 
		\State $S_1 = \{i ~ : ~ w_i = 1 \}$\;
		\State $n_0 = \# S_0$\;
		\State $n_1 = \# S_1$\;
			
		\For{$b$ in $\{1, \ldots, B\}$}
			\State $s_b^* = c($sample($S_0$, replace = T, size = $n_0$), sample($S_1$, replace = T, size = $n_1$)$)$\;		
			
			\State $x^*_b = x[s_b^*]$ \;
			\State $w^*_b = w[s_b^*]$ \;
			\State $y^*_b = y[s_b^*]$\;
			\State $\hat \tau^*_b(p) = $ learner$(x^*_b, w^*_b, y^*_b)(p)$\;
		\EndFor

		\State 	$ \hat \tau(p) =$ learner$(x, w, y)(p)$\;
		
		\State 	$\sigma = sd(\{\hat \tau^*_b(p)\}_{b=1}^B)$

		\State \Return{$(\hat \tau(p) - q_{\alpha/2} \sigma,  \hat \tau(p) + q_{1 - \alpha/2} \sigma)$}
		
		\EndProcedure
	\end{algorithmic}
	\algcomment{For this pseudo code we use \texttt{R} notation. For example, $c()$ is here a function that combines its arguments to form a vector.}
\end{algorithm}

\begin{algorithm}[h!]
			
	\caption{Bootstrap Confidence Intervals 2}\label{alg:computeCI2}
	\begin{algorithmic}[1]

		\Procedure{computeCI}{
			\newline$x$: features of the training data, 
			\newline $w$: treatment assignments of the training data, 
			\newline $y$: observed outcomes of the training data,
			\newline $p$: point of interest}
		\State $S_0 = \{i ~ : ~ w_i = 0 \}$\;
		\State $S_1 = \{i ~ : ~ w_i = 1 \}$\;
		\State $n_0 = \# S_0$\;
		\State $n_1 = \# S_1$\;
		
		\For{$b$ in $\{1, \ldots, B\}$}
		\State $s_b^* = c($sample($S_0$, replace = T, size = $n_0$), sample($S_1$, replace = T, size = $n_1$)$)$\;

		\State $x^*_b = x[s_b^*]$ \;
		\State $w^*_b = w[s_b^*]$ \;
		\State $y^*_b = y[s_b^*]$\;
		\State $\hat \tau^*_b(p) = $ learner$(x^*_b, w^*_b, y^*_b)(p)$\;
		\EndFor

		\State 	$ \tilde \tau(p) = \frac1B \sum_{b=1}^B \hat \tau^*_b(p)$\;
		
		\State For all $b$ in $\{1, \ldots, B\}$ and $j$ in $\{1,\ldots, n\}$ define 
		$$S^*_{bj} = \# \{k : s^*_{b}[k] = j \}$$\;
		\State For all $j$ in $\{1,\ldots, n\}$ define $\overline {S^*_{\cdot j}} = \frac1B \sum_{b=1}^B S^*_{bj}$ and 
		$$\cov_j = 	\frac1B \sum_{b=1}^B  (\hat \tau^*_b(p) -\tilde \tau(p)) (S^*_{bj} - \overline {S^*_{\cdot j}} )$$\;
		\State $\sigma =  \left(\sum_{j=1}^n Cov_j^2\right)^{0.5}$ \;
		\State \Return{$(\tilde \tau(p) - q_{\alpha/2} \sigma,  \tilde \tau(p) + q_{1 - \alpha/2} \sigma)$}
		
		\EndProcedure
	\end{algorithmic}
	\algcomment{This version of the bootstrap was proposed in \cite{efron2014estimation}.}
\end{algorithm}

\begin{algorithm}
	\caption{Monte Carlos Bias Approximation}\label{alg:computeBIAS}
	\begin{algorithmic}[1]
		
		\Procedure{approximateBIAS}{
			\newline$x$: features of the full data set, 
			\newline $w$: treatment assignments of the full data set, 
			\newline $y(0)$: potential outcome under control of the full data set,
			\newline $y(1)$: potential outcome under treatment of the full data set,
			\newline $S$: indices of observations that are not in the test set,
			\newline $S_T$: indices of the training set,
			\newline $p$: point of interest,
			\newline $\tau(p)$: the true CATE at p}
		
		\For{$i$ in $\{1, \ldots, 1000\}$}
		\State 	Create a new treatment assignment by permuting the original one,
		$$
		w_i = \mbox{sample}(w, \mbox{replace} = F).
		$$	
		\State Define the observed outcome, 
		$$
		y_i = y(1) w_i + y(0) (1 - w_i).
		$$
		\State 	Sample uniformly a training set of $50,000$ observations,
		\begin{align*}
		s^*_i &= \mbox{sample}(S, \mbox{replace} = F, \mbox{size} = 50,000),\\
		w^*_i &= w_i[s^*_i],\\
		x^*_i & = x[s^*_i],\\
		y^*_i & = y_i[s^*_i].
		\end{align*}				
		\State 	Estimate the CATE, 
		$$
		\hat \tau^*_i(p) =  \mbox{learner}(x^*_i, w^*_i, y^*_i)(p).
		$$
		\EndFor
		
		\State 	$ \bar \tau^*(p) =\frac1{1000} \sum_{i = 1}^{1000} \hat \tau^*_i(p)$\;	
		
		\State \Return{$\bar \tau^*(p) -  \tau(p)$}
		
		\EndProcedure
	\end{algorithmic}
	\algcomment{ This algorithm is used to compute the bias in a simulation study where the potential outcomes and the CATE function are known. $S$, the indices of the units that are not in the test set and $S_T$, the indices of the units in the training set are not the same, because the training set is in this case a subset of 50,000 units of the full data set.}
\end{algorithm}

\begin{algorithm}[h!]
	\caption{Bootstrap Bias}\label{alg:estimateBIAS}
	\begin{algorithmic}[1]

		\Procedure{estimateBIAS}{
			\newline$x$: features of the training data, 
			\newline $w$: treatment assignments of the training data, 
			\newline $y$: observed outcomes of the training data,
			\newline $p$: point of interest}
		
		\State $S_0 = \{i ~ : ~ w_i = 0 \}$\; 
		\State $S_1 = \{i ~ : ~ w_i = 1 \}$\;
		\State $n_0 = \# S_0$\;
		\State $n_1 = \# S_1$\;
		
		\For{$b$ in $\{1, \ldots, B\}$}
		\State $s_b^* = c($sample($S_0$, replace = T, size = $n_0$), sample($S_1$, replace = T, size = $n_1$)$)$\;		
		\State $x^*_b = x[s_b^*]$ \;
		\State $w^*_b = w[s_b^*]$ \;
		\State $y^*_b = y[s_b^*]$\;
		\State $\hat \tau^*_b(p) = $ learner$(x^*_b, w^*_b, y^*_b)(p)$\;
		\EndFor

		\State 	$ \hat \tau(p) =$ learner$(x, w, y)(p)$\;
		
		\State 	$ \bar \tau^*(p) =\frac1B \sum_{i = 1}^B \hat \tau^*_i(p)$\;	
		
		\State \Return{$\bar \tau^*(p) -  \hat \tau(p)$}
		
		\EndProcedure
	\end{algorithmic}
\end{algorithm}

 
\end{document}